\def\IZ{\text{\rm I$_0$}}
\def\IZS{\text{\rm I$_0^*$}}
\def\In#1{{\text{\rm I{}$_{#1}$}}}
\def\InS#1{{\text{\rm I{}$_{#1}^*$}}}
\def\II{\text{\rm II}}
\def\IIS{\text{\rm II$^*$}}
\def\III{\text{\rm III}}
\def\IIIS{\text{\rm III$^*$}}
\def\IV{\text{\rm IV}}
\def\IVS{\text{\rm IV$^*$}}
\DeclareSymbolFont{cyrletters}{OT2}{wncyr}{m}{n}
\DeclareMathSymbol{\Sha}{\mathalpha}{cyrletters}{"58}
\definecolor{refkey}{rgb}{1,1,1}
\definecolor{labelkey}{rgb}{1,1,1}
\definecolor{cite}{rgb}{0.9451,0.2706,0.4941}
\definecolor{ruri}{rgb}{0.0078,0.4022,0.8010}
\makeindex \setcounter{tocdepth}{1}
\def\F{{\rm \mathbb{F}}}
\def\Z{{\rm \mathbb{Z}}}
\def\Q{{\rm \mathbb{Q}}}
\def\C{{\rm \mathbb{C}}}
\def\R{{\rm \mathbb{R}}}
\def\P{{\rm \mathbb{P}}}
\def\p{{\rm \mathfrak{p}}}
\def\m{{\rm \mathfrak{m}}}
\def\q{{\rm \mathfrak{q}}}
\def\O{{\rm \mathcal{O}}}
\def\A{{\rm \mathbb{A}}}
\def\Nm{{\rm Nm}}
\def\PGL{{\rm PGL}}
\def\irr{{\rm irr}}
\def\minn{{\rm min}}
\def\ord{{\rm ord}}
\def\avg{{\rm avg}}
\def\inv{{\rm inv}}
\def\Aut{{\rm Aut}}
\def\la{{\rm \longrightarrow \,}}
\def\Tr{{\rm Tr}}
\def\Disc{{\rm Disc}}
\def\disc{{\rm disc}}
\def\SL{{\rm SL}}
\def\Res{{\rm Res}}
\def\Sym{{\rm Sym}}
\def\sol{{\phi{\text{-}{\rm sol}}}}
\def\sols{{\phi_s{\text{-}{\rm sol}}}}
\def\GL{{\rm GL}}
\def\Gal{{\rm Gal}}
\def\End{{\rm End}}
\def\Sel{{\rm Sel}}
\def\ls{{{\rm loc.}\hspace{.35mm}\phi\text{-}{\rm sol.}}}
\def\coker{{\rm coker \hspace{1mm}}}
\numberwithin{equation}{section}
\newtheorem{theorem}{Theorem}[section]
\newtheorem{lemma}[theorem]{Lemma}
\newtheorem{remark}[theorem]{Remark}
\newtheorem{definition}[theorem]{Definition}
\newtheorem{example}[theorem]{Example}
\newtheorem{corollary}[theorem]{Corollary}
\newtheorem{proposition}[theorem]{Proposition}
\def\shownotes{\def\inline##1##2##3{ \begin{adjustwidth}{3mm}{7mm}\mbox{}\par \noindent
{\color{##1}\hspace{-1.9cm}{\large ##2}\vspace{-\baselineskip}\\##3}
\newline\end{adjustwidth}} \def\inlinewide##1##2##3{ \begin{adjustwidth}{0mm}{0cm}\mbox{}\par \noindent
{\color{##1}\hspace{-1.6cm}{\large ##2}\vspace{-\baselineskip}\\##3}
\newline\end{adjustwidth}}  \def\marg##1##2##3{\marginnote{\color{##1}{\large ##2}\\{\small ##3}}[-.8cm]}}
\begin{document}
\setlength{\parskip}{2pt} % 1ex plus 0.5ex minus 0.2ex}
\setlength{\parindent}{8pt}
\title{Three-isogeny Selmer groups and ranks of abelian varieties in quadratic twist families over a number field}
\author{Manjul Bhargava}
\address{Department of Mathematics, Princeton University, Princeton, NJ 08544}
\email{bhargava@math.princeton.edu}

\author{Zev Klagsbrun}
\address{Center for Communications Research, San Diego, CA 92121}
\email{zdklags@ccrwest.org}

\author{Robert J. Lemke Oliver}
\address{Department of Mathematics, Tufts University, Medford, MA 02155}
\email{robert.lemke{\_{}}oliver@tufts.edu}

\author{Ari Shnidman}
\address{Department of Mathematics, Boston College, Chestnut Hill, MA 02467} 
\email{shnidman@bc.edu}

\maketitle

\begin{abstract}
For an abelian variety $A$ over a number field $F$, we prove that the average rank of the quadratic twists of $A$ is bounded, under the assumption that the multiplication-by-3 isogeny on~$A$ factors as a composition of 3-isogenies over~$F$.    
This is the first such boundedness result for an absolutely simple abelian variety $A$ of dimension greater than one.  In fact, we exhibit such twist families in arbitrarily large dimension and over any number field.  

In dimension one, we deduce that if $E/F$ is an elliptic curve admitting a 3-isogeny, then the average rank of its quadratic twists is bounded. If $F$ is totally real, we moreover show that a positive proportion of twists have rank 0 and a positive proportion have $3$-Selmer rank 1.  
These results on bounded average ranks in families of quadratic twists represent new progress towards Goldfeld's conjecture -- which states that the average rank in the quadratic twist family of an elliptic curve over $\Q$ should be $1/2$ -- and the first progress towards the analogous conjecture over number fields other than $\Q$.  

%We compute the average size of the $\phi$-Selmer group in the family of quadratic twists of an abelian variety admitting a 3-isogeny $\phi$.  

Our results follow from a computation of the average size of the $\phi$-Selmer group in the family of quadratic twists of an abelian variety admitting a 3-isogeny $\phi$.  

%We determine the average size of the $\phi$-Selmer group in any quadratic twist family of abelian varieties having an isogeny $\phi$ of degree~3 over any number field.  This has several applications towards the rank statistics in such families of quadratic twists.  For example, it yields the first known quadratic twist families of absolutely simple abelian varieties over $\Q$ of dimension greater than one, for which the average rank is bounded; in fact, we obtain such twist families in arbitrarily large dimension.  In the case that $E/F$ is an elliptic curve admitting a 3-isogeny, we prove that the average rank of its quadratic twists is bounded; if $F$ is totally real, we moreover show that a positive proportion of these twists have rank 0 and a positive proportion have $3$-Selmer rank 1.  

\end{abstract}
\vspace{.5cm}

\makeatletter
\makeatother

\section{Introduction}

Let $A$ be an abelian variety over a number field $F$.  For any squareclass $s \in F^*/F^{*2}$, the quadratic twist $A_s$ is another abelian variety over~$F$ that becomes isomorphic to $A$ upon base change to $F(\sqrt s)$ (see Section \ref{soluble orbits}). This recovers the usual notion of quadratic twist for elliptic curves: if $A$ is an elliptic curve with Weierstrass model $y^2 = f(x)$, then $A_s$ has model $\tilde sy^2 = f(x)$, for any lift $\tilde s$ of $s$ to $F^*$.  

Quadratic twist families are the simplest possible families of abelian varieties---geometrically, they correspond to a single point in the moduli space.  Nevertheless, we know very little about the arithmetic of such families.  For example, for general abelian varieties $A$, we know essentially nothing about the behavior of the ranks of the Mordell--Weil groups $A_s(F)$ as $s$ varies.  In the special case where $A = E$ is an elliptic curve over $\Q$, Goldfeld conjectured that the average rank of $E_s$ is equal to $1/2$, with $50\%$ of twists having rank 0 and $50\%$ having rank 1 \cite{Goldfeld}.  Even in this case, very little is known.  Smith \cite{Smith}, building on work of Kane \cite{Kane}, has recently proved that in the very special case where $E[2](\Q) = (\Z/2\Z)^2$, the average rank is at most $1/2$.  To date, this is the only case where the average rank of $A_s(F)$ has been proven to be bounded.

In this paper, we prove the boundedness of the average rank of $A_s(F)$ for a large class of abelian varieties $A$, namely,  those with an isogeny $A \to A$ that factors as a composition of 3-isogenies over~$F$.  As a consequence, over $\Q$, we give the first known examples of elliptic curves $E$ not having full rational 2-torsion for which the average rank of $E_s(\Q)$ is bounded, specifically, those $E$ having a rational subgroup of order 3.  Over general number fields $F \neq \Q$, we give the first known examples of elliptic curves of any kind for which the average rank of $E_s(F)$ is bounded.  We also provide the first known examples of absolutely simple abelian varieties $A$ over $\Q$ having dimension greater than one for which the average rank of $A_s(\Q)$ is bounded, and indeed we give such examples over any number field $F$ and in arbitrarily large dimension.  
%Our results also yield new lower bounds for the proportions of elliptic curves in quadratic twist families that have rank 0, over general number fields.    

The proofs of these results are accomplished through a computation of the average size of a family of Selmer groups, namely, the $\phi$-Selmer groups associated to a quadratic twist family of 3-isogenies $\phi_s \colon A_s \to A'_s$ of abelian varieties.  There have been a number of recent results on the average sizes of Selmer groups in large universal families of elliptic curves or Jacobians of higher genus curves (see, e.g., \cite{BG,BH, BS1,BS2,SW, Ananth,Th}) obtained via geometry-of-numbers methods. It has been a recurring question as to whether such methods could somehow be adapted to obtain analogous results in much thinner (e.g.,\ one-parameter) families.  The current work represents the first example of such methods being used to determine the average size of a Selmer group in a family of quadratic twists.  

\section{Results}

%\vspace{.1in}
%As a consequence, we give the first examples of absolutely simple $A$ over $\Q$ of dimension greater than 1 for which the average rank of $A_s(\Q)$ is bounded.  Over general number fields $F \neq \Q$, ours are the first results on the boundedness of the average rank of $A_s(F)$, even when $A$ is an elliptic curve.  In the case where $A$ is an elliptic curve over $\Q$, we give a large class of examples not having full rational 2-torsion, namely, those with a $\Q$-rational subgroup of order 3.  
     
We now describe our results more precisely.  Suppose $A$ admits an isogeny $\phi \colon A \to A'$ over~$F$.  One can define a Selmer group $\Sel_{\phi}(A)$, which is a direct generalization of  the $n$-Selmer group $\Sel_n(E)$ attached to the multiplication-by-$n$ isogeny $[n] \colon E\to E$ on an elliptic curve (see Section \ref{integerorbits}).  For each $s \in F^*/F^{*2}$, there is also an isogeny $\phi_s \colon A_s \to A'_s$ and a corresponding Selmer group $\Sel_{\phi_s}(A_s)$.  Our main theorem computes the average size of $\Sel_{\phi_s}(A_s)$ as $s$ varies, in the case where $\phi$ has degree~3.          

For any place $\p$ of $F$, we write $F_\p$ for the $\p$-adic completion of $F$.  From the results of \cite{j=0}, one naturally expects that the average number of non-trivial elements of $\Sel_\phi(A_s)$ is governed by an Euler product whose factor at $\p$ is the average size of the {\it local $\phi$-Selmer ratio}
\[c_\p(\phi_s) := \dfrac{|A_s'(F_\p)/\phi(A_s(F_\p))|}{|A_s[\phi](F_\p)|},\]
as $s$ varies over $F_\p^*/F_\p^{*2}$. 
This is exactly what we prove, but unlike in \cite[Thm.\ 2]{j=0}, the Euler product turns out to be finite.  
This allows us to formulate our results as follows.  

We call a subset $\Sigma_\p$ of $F_\p^*/F_\p^{*2}$ a {\it local condition}.  If $\p$ is infinite, then we give $F_\p^*/F_\p^{*2}$ the discrete measure, and if $\p$ is finite, we give $F_\p^*/F_\p^{*2}$ the measure obtained by taking the usual (normalized) Haar measure of  the preimage of $\Sigma_\p$ in $\O_{F_\p}$.  
This choice of measure gives rise to a natural notion of the average of $c_\p(\phi_s)$ over $s\in \Sigma_\p$, which we denote by $\mathrm{avg}_{\Sigma_\p} c_\p(\phi_s)$.  In fact, it turns out that $c_\p(\phi_s)=1$, regardless of $s$, for every finite prime $\p \nmid 3 \mathfrak{f}_A$, where $\mathfrak{f}_A$ is the conductor of $A$ (Theorem~\ref{integrality}). Hence this local average is equal to $1$ for all but finitely many primes.

Recall the definition of the {\it global Selmer ratio}:
\[c(\phi_s) := \prod_{\p \leq \infty} c_\p(\phi_s).\]
We say that a subset $\Sigma \subset F^*/F^{*2}$ is defined by local conditions if $\Sigma = F^*/F^{*2} \cap \prod_\p \Sigma_\p$, where each $\Sigma_\p\subseteq F_\p^*/F_\p^{*2}$ is a local condition; if $\Sigma_\p = F_\p^*/F_\p^{*2}$ for all but finitely many $\p$, then we say that $\Sigma$ is defined by finitely many local conditions.  We define the height $H(s)$ of $s\in F^*/F^{*2}$ by 
$$H(s) := \prod_{\mathfrak{p}\,:\,v_\mathfrak{p}(s){\rm  \;is\; odd}}N(\mathfrak{p}).$$
For any subset $\Sigma\subset F^*/F^{*2}$ defined by finitely many local conditions, we may then define the global average of $c(\phi_s)$ over all $s\in\Sigma$ by
\[
\mathrm{avg}_\Sigma c(\phi_s) := \lim_{X\to\infty} \frac{1}{|\Sigma(X)|}\displaystyle\sum_{s \in \Sigma(X)} c(\phi_s) = \prod_{\p \mid 3\mathfrak{f}_A \infty} \mathrm{avg}_{\Sigma_\p} c_\p(\phi_s), 
\]
which is a rational number; here, $\Sigma(X) := \{ s \in \Sigma \colon H(s) < X\}$.  If $F=\Q$, then $\Sigma(X)$ consists of the squareclasses of all squarefree integers of absolute value less than $X$, recovering the usual ordering of quadratic twists over $\Q$.

Our main result is:
\begin{theorem}\label{main}
Let $\phi \colon A \to A'$ be a $3$-isogeny of abelian varieties over a number field $F$, and let $\Sigma$ be a non-empty subset of $F^*/F^{*2}$ defined by finitely many local conditions.  When the abelian varieties $A_s$, $s \in \Sigma$, are ordered by the height of $s$, the average size of 
$\Sel_{\phi_s}(A_s)$ is $1 + \avg_\Sigma \, c(\phi_s)$.
\end{theorem}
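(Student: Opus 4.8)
The proof will run the standard geometry-of-numbers machinery, adapted from universal families to this family of quadratic twists.

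\textbf{Step 1 (parametrization).} As set up in Sections~\ref{soluble orbits} and~\ref{integerorbits}, a nontrivial class in $\Sel_{\phi_s}(A_s)$ should be encoded by a $G(F)$-orbit of an everywhere locally soluble element $v$ of (an integral model of) the coregular representation $V$ attached to the $3$-isogeny $\phi$ --- concretely, a twisted form of the $\mathrm{GL}_2$-action on binary cubic forms, the form and the integral model being governed by the group scheme $A[\phi]$ --- constrained so that the $\mathrm{GL}_2$-invariant of $v$ (the discriminant) lies in the squareclass of $s$ times a fixed discriminant depending only on $A$. The point making this tractable, despite the ``thin'' one-parameter nature of the family of abelian varieties, is that as $s$ ranges over $F^*/F^{*2}$ the discriminants of the associated forms range over \emph{all} squareclasses, so $\{(s,v)\}$ is a \emph{full} family of orbits. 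Hence $\sum_{s\in\Sigma(X)}\bigl(|\Sel_{\phi_s}(A_s)|-1\bigr)$ equals the number of $G(\O_F)$-classes of locally soluble integral orbits, subject to the local conditions cutting out $\Sigma$, ordered by the quantity $H(\cdot)$ of the discriminant.

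\textbf{Step 2 (geometry of numbers).} The new wrinkle, compared with ordering by the size of the discriminant, is that $H$ only sees the squarefree part, so orbits of small $H$ but huge discriminant --- non-maximal cubic rings of large conductor --- could a priori dominate. I would rule this out using the local analysis behind Theorem~\ref{integrality}: since $c_\p(\phi_s)=1$ for $\p\nmid 3\mathfrak{f}_A$, local solubility at such $\p$ should force the orbit to be maximal (unramified) at $\p$, so that the conductor divides a fixed ideal supported on $3\mathfrak{f}_A$ and the discriminant is comparable to $H(s)$ up to a bounded factor. This reduces the problem to counting locally soluble integral orbits of bounded discriminant, which I would carry out by the usual recipe: build a fundamental domain for $G(\O_F)$ acting on $V(F\otimes\R)$, average over a compact family of translates following Bhargava's averaging technique to control the cusp, and count lattice points in the main body; working over a general number field forces an adelic formulation keeping track of units and the class group, but introduces no essentially new phenomenon.

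\textbf{Step 3 (sieve and local computation).} Finally I would sieve to impose the infinitely many local solubility conditions together with the finitely many conditions defining $\Sigma$. By Theorem~\ref{integrality} the condition at $\p\nmid 3\mathfrak{f}_A$ is automatic and contributes local density exactly $1=c_\p(\phi_s)$, so only finitely many genuine conditions remain; the infinitude is absorbed by a uniform tail estimate --- of Davenport--Heilbronn squarefree-sieve type, here in the twisted number-field setting --- bounding the number of bounded-height orbits failing maximality or solubility at a prime of norm $>M$ by $O(|\Sigma(X)|/M^{1+\delta})$ uniformly in $X$. I expect this uniform estimate to be the main obstacle. Assembling the pieces gives
\[
\sum_{s\in\Sigma(X)}\bigl(|\Sel_{\phi_s}(A_s)|-1\bigr)=|\Sigma(X)|\cdot\prod_{\p\mid 3\mathfrak{f}_A\infty}\mathrm{avg}_{\Sigma_\p}c_\p(\phi_s)\cdot\bigl(1+o(1)\bigr),
\]
where each local factor is, by construction, the $\p$-adic volume of the locally soluble locus for the measure on $\Sigma_\p$; a direct local orbit count identifies this volume with $\mathrm{avg}_{\Sigma_\p}c_\p(\phi_s)$. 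Dividing by $|\Sigma(X)|$, adding back $1$ for the identity class, and recognizing the product as $\mathrm{avg}_\Sigma c(\phi_s)$ yields the theorem.
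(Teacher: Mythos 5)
Your proposal is correct and follows essentially the same route as the paper: parametrize $\phi_s$-Selmer elements by locally $\phi$-soluble binary cubic forms of discriminant $4\hat{D}s$, use a uniform integrality/twisting statement so that integral representatives exist and the relevant discriminants are squarefree away from a fixed finite set of primes, then count integral orbits of bounded height, sieve, and identify the local densities with the averages of the local Selmer ratios $c_\p(\phi_s)$. The only practical difference is that the paper does not redo the fundamental-domain, cusp, and tail analysis by hand: it verifies the axioms of the general counting theorem of Bhargava--Shankar--Wang over number fields (with $G=\SL_2$ acting on binary cubic forms and the discriminant as invariant), and the uniform tail estimate you flag as the main obstacle is exactly their squarefree-sieve input, which is cited rather than reproved.
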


%There have been a number of recent results on the average sizes of Selmer groups in large universal families of elliptic curves or Jacobians of higher genus curves (see, e.g., \cite{BG,BH, BS1,BS2,SW, Ananth,Th}), obtained using geometry-of-numbers methods. It has been a recurring question as to whether such methods could somehow be adapted to obtain analogous results in much thinner families, particularly in the case of quadratic twists.  Theorem~\ref{main} represents the first example of such methods being used to determine the average size of a Selmer group in a family of quadratic twists.  

Theorem \ref{main} has a number of applications to ranks of quadratic twists of abelian varieties.

\begin{theorem}\label{ab var ranks}
Suppose $A$ is an abelian variety over a number field $F$ with an isogeny $A \to A$ that factors as a composition of $3$-isogenies over~$F$.  Then the average rank of $A_s(F)$, $s \in F^*/F^{*2}$, is bounded. 
\end{theorem}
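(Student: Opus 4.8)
The plan is to reduce the statement to Theorem \ref{main} via a standard descent argument. First I would set up the basic deduction: suppose the multiplication-by-$3$ map on $A$ factors as $A = A_0 \xrightarrow{\phi_1} A_1 \xrightarrow{\phi_2} \cdots \xrightarrow{\phi_k} A_k$ with each $\phi_i$ a $3$-isogeny over $F$ and $\phi_k \circ \cdots \circ \phi_1$ the isogeny in the hypothesis (not necessarily $[3]$ itself, but some isogeny $A \to A$; composing with the dual gives an isogeny whose kernel is killed by a power of $3$, so after possibly enlarging the chain we may assume we are bounding $\rk A_s(F)$ using the $3$-power torsion). Each quadratic twist $A_s$ inherits the same factorization $(\phi_i)_s \colon (A_{i-1})_s \to (A_i)_s$, since twisting is functorial. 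The key inequality is the Selmer-group bound on the rank: for an abelian variety $B/F$ of dimension $d$ with an isogeny $\psi \colon B \to B'$ of degree $3$, one has
\[
\dim_{\F_3} \Sel_\psi(B) + \dim_{\F_3}\Sel_{\widehat\psi}(B') \;\geq\; \rk B(F) \cdot \tfrac{\dim_{\F_3}(\text{something})}{\cdots},
\]
more precisely, iterating the isogeny descent exact sequences down the chain $A_s \to (A_1)_s \to \cdots$ and back up via the dual isogenies expresses $\rk A_s(F)$ (times a positive constant depending only on the geometric data, hence independent of $s$) in terms of $\dim_{\F_3}$ of the various $\phi_i$- and $\widehat\phi_i$-Selmer groups of the twists $(A_i)_s$.

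Second I would invoke Theorem \ref{main} for each of the finitely many isogenies $\phi_i$ and each of their duals $\widehat\phi_i$, taking $\Sigma = F^*/F^{*2}$ (which is indeed defined by finitely many — in fact zero — local conditions, and is non-empty). This gives that the average size of each $\Sel_{(\phi_i)_s}((A_{i-1})_s)$ is $1 + \avg\, c((\phi_i)_s)$, a finite constant. An average-size bound on a group of exponent $3$ immediately yields a bound on its average $\F_3$-dimension, since $3^{\dim} \leq |\text{group}|$ gives $\dim \leq \log_3|\text{group}|$ and one can either apply Jensen/concavity of $\log$ or, more crudely, use that if the average of a nonnegative integer quantity $3^{\dim}-1$ is $C$ then $\dim \geq n$ on a set of density at most $C/(3^n-1)$, so the average of $\dim$ is $O(C)$. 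Summing these finitely many bounds over the chain and dividing by the fixed positive constant from the descent bounds $\frac{1}{|\Sigma(X)|}\sum_{s \in \Sigma(X)} \rk A_s(F)$ uniformly in $X$, which is exactly the assertion that the average rank is bounded.

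The main obstacle — and the only genuinely non-formal point — is setting up the descent inequality relating $\rk A_s(F)$ to the $\phi_i$-Selmer dimensions cleanly enough that the constants are visibly uniform in $s$. The subtlety is that the exact sequences
\[
0 \to B'(F)/\psi B(F) \to \Sel_\psi(B) \to \Sha(B)[\psi] \to 0
\]
control $B'(F)/\psi B(F)$, not $\rk B(F)$ directly, and stitching the chain together requires keeping track of the kernels $B[\psi](F)$ and the images of the various Mordell–Weil groups; but all of these are bounded in terms of the geometric torsion of $A$ and the conductor $\mathfrak{f}_A$, hence uniformly in the twist $s$. One should also note at the outset that since the hypothesis gives an isogeny $A\to A$ and not literally $[3]$, one first composes to land on a genuine power of $[3]$ so that the descent actually computes the full rank; this is a one-line remark but worth making explicit. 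No new input beyond Theorem \ref{main} and standard isogeny-descent exact sequences is needed.
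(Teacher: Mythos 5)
Your overall architecture (twist the chain of $3$-isogenies, apply Theorem \ref{main} with $\Sigma = F^*/F^{*2}$ to each link, convert bounded average Selmer size into bounded average $\F_3$-dimension, sum over the finitely many links) matches the paper, but the one step you yourself flag as non-formal is exactly where there is a genuine gap, and the paper handles it by a different device that is absent from your proposal. Your reduction ``compose with the dual so we may assume we are descending along a power of $[3]$'' is not justified: if $\psi = \phi_n\circ\cdots\circ\phi_1\colon A \to A$ has degree $3^n$, the complementary isogeny $\psi'$ with $\psi'\circ\psi = [3^n]$ exists (since $\ker\psi \subseteq A[3^n]$), but there is no reason $\psi'$ factors as a composition of $3$-isogenies over $F$ --- its kernel $\psi(A[3^n])$ need not contain any $F$-rational subgroup of order $3$ --- so Theorem \ref{main} gives you no control over the extra Selmer groups your enlarged chain would require. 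Nor can you ``come back up via the dual isogenies'': the $\hat\phi_i$ compose to $\hat\psi$ on the dual abelian variety $\hat A$, which cannot be concatenated with the chain on $A$ without a principal polarization, and even when one exists the Rosati transform $\psi^\dagger\circ\psi$ need not be a power of $[3]$ when $\dim A > 1$. Without the reduction, descent along $\psi$ only gives $A_s(F)/\psi A_s(F) \hookrightarrow \Sel_\psi(A_s)$, and you have not related $|A_s(F)/\psi A_s(F)|$ to $\rk A_s(F)$; this is the missing idea, not a bookkeeping issue about torsion or conductors.

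The paper closes this gap using precisely the hypothesis that $\psi$ is an endomorphism of $A$: letting $\O = \Z[\psi] \subseteq \End(A) \simeq \End(A_s)$, $t = \rk_\Z \O$, and $|\O/\psi\O| = 3^e$, the Mordell--Weil group modulo torsion is an $\O$-module, whence $|A_s(F)/\psi A_s(F)| \geq 3^{(e/t)\,\rk A_s(F)}$, uniformly in $s$. Combining this embedding into $\Sel_\psi(A_s)$ with the iterated exact sequence of Lemma \ref{lem:ranks-add} yields $\frac{e}{t}\rk A_s(F) \leq \sum_{i}\dim_{\F_3}\Sel_{\phi_{i,s}}(A_{i,s}) \leq \sum_i |\Sel_{\phi_{i,s}}(A_{i,s})|$, and averaging via Theorem \ref{main} finishes; only the forward chain $\phi_1,\dots,\phi_n$ is used --- no dual Selmer groups, no completion of $\psi$ to $[3^m]$. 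To repair your write-up you should either restrict to the case where $[3]$ itself (or a power, up to automorphism) factors into $3$-isogenies over $F$ --- in which case your reduction is vacuous and the argument goes through with the $\phi_i$ alone, as in the applications of Section \ref{superelliptic} --- or supply the module-theoretic lower bound above showing that $A_s(F)/\psi A_s(F)$ already captures a fixed positive proportion of the rank.
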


Theorem \ref{ab var ranks} gives the first known examples of absolutely simple abelian varieties of dimension greater than one whose quadratic twists have bounded rank on average.  
%This gives evidence 
%thereby providing the first progress  the natural generalization of 
%Goldfeld's conjecture to higher dimensional abelian varieties.  
As an example, Theorem~\ref{ab var ranks} applies to the Jacobian $J(C)$ of the genus three curve $C \colon y^3 = x^4-x$ over $\mathbb{Q}$, or indeed over any number field $F$.  For a robust set of examples of abelian surfaces over~$F$ to which Theorem~\ref{ab var ranks} applies, see \cite{BFT} and \cite{BN}.  In Section~\ref{superelliptic}, we provide a large class of such examples of absolutely simple Jacobians of arbitrarily large dimension by considering the trigonal curves $y^3 = f(x)$. 

Theorem \ref{main} also yields the first known examples of absolutely simple abelian varieties over a number field $F$ of dimension greater than one with a positive proportion of twists having rank 0.  One example is the base change of $J(C)$ above to the cyclotomic field $\Q(\zeta_9)$, for which we prove that at least~$50\%$ of twists have rank 0 (Theorem \ref{picard50}).  
  
Any abelian variety obtains full level-three structure upon base change to a sufficiently large number field, so that the multiplication-by-three isogeny is the composition of 3-isogenies.  We thus obtain the following immediate corollary of Theorem \ref{ab var ranks}, which is new even in the case that $A$ is an elliptic curve.          

\begin{theorem}
Let $A$ be an abelian variety over a number field $F$.  Then there exists a finite field extension $L/F$ such that the average rank of the quadratic twist family $A_{L,s}$, $s \in L^*/L^{*2}$, of the base change $A_L$ is bounded.
\end{theorem}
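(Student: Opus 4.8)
The plan is to reduce the statement directly to Theorem~\ref{ab var ranks} by producing, for any given $A/F$, a finite extension $L/F$ over which the multiplication-by-three isogeny of the base change $A_L$ factors as a composition of $3$-isogenies over $L$.

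First I would set $L := F(A[3])$, the finite extension of $F$ generated by the coordinates of the $3$-torsion points of $A$; since $F$ has characteristic zero, $A[3]$ is a finite étale group scheme over $F$, so this is indeed a finite extension, and $A_L[3]$ is the constant group scheme, with $A_L[3](L) = A_L[3](\overline{F}) \cong (\Z/3\Z)^{2g}$, where $g = \dim A$. Next I would choose a complete flag of subgroup schemes $0 = G_0 \subset G_1 \subset \cdots \subset G_{2g} = A_L[3]$ with each successive quotient $G_{i+1}/G_i$ isomorphic to $\Z/3\Z$ — possible precisely because $A_L[3]$ is constant over $L$. Writing $B_i := A_L/G_i$, each $B_i$ is an abelian variety over $L$, each quotient map $B_i \to B_{i+1}$ is an isogeny of degree $3$ with kernel $G_{i+1}/G_i$, and the composite $A_L = B_0 \to B_1 \to \cdots \to B_{2g} = A_L/A_L[3]$, once $A_L/A_L[3]$ is identified with $A_L$ via the canonical isomorphism induced by $[3]$, is precisely the multiplication-by-three isogeny of $A_L$. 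Hence $A_L$ over $L$ satisfies the hypothesis of Theorem~\ref{ab var ranks}, and that theorem yields that the average rank of $A_{L,s}(L)$, for $s \in L^*/L^{*2}$ ordered by $H(s)$, is bounded, which is exactly the claim.

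I do not anticipate any serious obstacle: the two inputs — finiteness of the field of definition of the $3$-torsion, and the splitting of a constant étale group scheme into a flag with cyclic quotients of order $3$ — are standard, and the remaining bookkeeping is merely to observe that quadratic twisting commutes with base change, so that the family $\{A_{L,s}\}_{s \in L^*/L^{*2}}$ is the quadratic twist family of $A_L$ to which Theorem~\ref{ab var ranks} directly applies. (If anything, one can usually take $L$ considerably smaller than $F(A[3])$ — any $L$ over which $A_L[3]$ admits such a flag of subgroup schemes suffices — but the crude choice above already proves the existence statement.)
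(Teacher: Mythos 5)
Your argument is correct and is essentially the paper's own: both take $L$ over which $A_L[3]$ becomes constant (the paper phrases this as "obtains full level-three structure") and then factor $[3]$ through a flag of order-$3$ subgroups to apply Theorem~\ref{ab var ranks}. The paper states this in one line as an immediate corollary; your write-up merely fills in the routine flag-construction detail.
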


When $A$ is an elliptic curve, the hypothesis in Theorem \ref{ab var ranks} is simply that $A$ admits a 3-isogeny, or equivalently, an $F$-rational subgroup of order 3.  Accordingly, we can be much more precise in this case.   
To state the result, we introduce the {\em logarithmic Selmer ratio} $t(\phi_s)$: the global Selmer ratio $c(\phi_s)$ lies in $3^\Z$, and we take $t(\phi_s) := \ord_3 \, c(\phi_s)$.
   
\begin{theorem}\label{avgrank}
Suppose $E/F$ is an elliptic curve admitting a $3$-isogeny and let $\Sigma \subset F^*/F^{*2}$ be defined by finitely many local conditions.  Then the average rank of $E_s(F)$, $s \in \Sigma$, is at most  $\avg_\Sigma \left( |t(\phi_s)| + 3^{-|t(\phi_s)|}\right)$.  
\end{theorem}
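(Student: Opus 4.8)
The plan is to bound the rank of $E_s(F)$ in terms of Selmer groups attached to the $3$-isogeny $\phi_s \colon E_s \to E_s'$ and its dual $\hat\phi_s \colon E_s' \to E_s$, and then to invoke Theorem \ref{main} together with its evident analogue for $\hat\phi_s$ to average. First I would recall the standard exact sequence relating the $3$-Selmer group to the $\phi$- and $\hat\phi$-Selmer groups: there is an inequality
\[
\rk E_s(F) \;\le\; \dimF \Sel_{\phi_s}(E_s) + \dimF \Sel_{\hat\phi_s}(E_s') - \dimF E_s[\phi_s](F) - \dimF E_s'[\hat\phi_s](F),
\]
or, more precisely, a bound of the shape $\rk E_s(F) + \dimF\Sha \le \dimF\Sel_{\phi_s}(E_s)+\dimF\Sel_{\hat\phi_s}(E_s') - 2$ coming from the six-term exact sequence associated to $0 \to E_s[\phi_s] \to E_s[3] \to E_s'[\hat\phi_s] \to 0$. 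The point is that $\rk E_s(F)$ is controlled by the \emph{sum} of the sizes of the two dual Selmer groups. So it suffices to bound the average of $\dimF\Sel_{\phi_s}(E_s) + \dimF\Sel_{\hat\phi_s}(E_s')$, up to an additive constant, over $s \in \Sigma$.

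Next I would bring in the numerology of the Selmer ratio. Writing $t = t(\phi_s) = \ord_3 c(\phi_s)$, one has $c(\hat\phi_s) = 1/c(\phi_s)$ (the product of the two global Selmer ratios is $1$, essentially by the self-duality built into the local Tamagawa-type factors), so $t(\hat\phi_s) = -t$. By Theorem \ref{main}, the average size of $\Sel_{\phi_s}(E_s)$ over $s \in \Sigma$ is $1 + \avg_\Sigma c(\phi_s)$, and applying the same theorem to the twist family of $\hat\phi_s$ — noting that $\{A_s'\}$ ranges over the same quadratic twist family as $s$ varies over $\Sigma$, possibly after adjusting the local conditions, which does not affect the relevant averages — the average size of $\Sel_{\hat\phi_s}(E_s')$ is $1 + \avg_\Sigma c(\hat\phi_s)$. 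Now I would pass from \emph{expected size} to \emph{expected dimension} via the elementary inequality $\dimF V \le |V| - 1$ for a finite $\F_3$-vector space $V$ (with equality iff $\dim V \le 1$); combined with the rank bound above this gives
\[
\avg_\Sigma \rk E_s(F) \;\le\; \avg_\Sigma\!\big(|\Sel_{\phi_s}(E_s)| - 1\big) + \avg_\Sigma\!\big(|\Sel_{\hat\phi_s}(E_s')| - 1\big) \;=\; \avg_\Sigma c(\phi_s) + \avg_\Sigma c(\hat\phi_s).
\]
Since $c(\phi_s) = 3^{t}$ and $c(\hat\phi_s) = 3^{-t}$, for each fixed $s$ the quantity $c(\phi_s) + c(\hat\phi_s) = 3^t + 3^{-t}$ is symmetric in $t \leftrightarrow -t$; comparing with the target bound $|t| + 3^{-|t|}$, one checks the pointwise inequality $3^t + 3^{-t} \le |t| + 3^{-|t|}$... which is false, so the crude step must be sharpened.

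The real argument must therefore not throw away the dimension–size gap so wastefully on the "large" side. The fix, which I expect to be the technical heart of the proof, is to use the rank bound more carefully: one of the two Selmer groups has size $\le 3^{\min(t,0)+?}$ controlled structure, but more to the point, $\rk E_s(F) \le \dimF \Sel_{\phi_s}(E_s) + \dimF\Sel_{\hat\phi_s}(E_s') - 2$ and the pair $(\Sel_{\phi_s}, \Sel_{\hat\phi_s})$ satisfies a constraint forcing the difference of dimensions to be exactly $t$ (this is precisely the content of $c(\phi_s) = 3^t$: the local Selmer ratio computes $\dimF\Sel_{\phi_s} - \dimF\Sel_{\hat\phi_s}$ up to the global $E[\phi]$ and $E[3]$ terms). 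So if $\dimF\Sel_{\hat\phi_s}(E_s') = d$, then $\dimF\Sel_{\phi_s}(E_s) = d + t$ (heuristically), and $\rk E_s(F) \le 2d + t - 2 \le 2d + |t|$; one then takes expectations over the "short" Selmer group $\Sel_{\hat\phi_s}$ or $\Sel_{\phi_s}$ (whichever has $t \ge 0$), whose average size is $1 + 3^{-|t|}$, giving average dimension $\le 3^{-|t|}$, and concludes $\avg_\Sigma \rk E_s(F) \le \avg_\Sigma(|t(\phi_s)| + 2\cdot 3^{-|t(\phi_s)|})$, which one refines to $|t| + 3^{-|t|}$ by being precise about the additive constants in the exact sequence (the $-2$ and the $E[\phi](F)$, $E'[\hat\phi](F)$ contributions) and about which of $\phi_s, \hat\phi_s$ to apply $\dim \le |{\cdot}|-1$ to. The main obstacle is exactly this bookkeeping: correctly combining the six-term exact sequence, the identity $t(\hat\phi_s) = -t(\phi_s)$, the inequality $\dim V \le |V|-1$ applied only to the Selmer group on the side where $t$ has the favorable sign, and Theorem \ref{main} applied to both $\phi_s$ and $\hat\phi_s$ — together with checking that the local conditions defining $\Sigma$ pull back to admissible local conditions for the $\hat\phi_s$-family and that the heights agree so that the two averages are taken with respect to the same ordering.
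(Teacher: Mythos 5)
Your overall strategy is the paper's: use $\Sel_{\phi_s}\to\Sel_3\to\Sel_{\hat\phi_s}$, Cassels' formula to get $c(\phi_s)c(\hat\phi_s)=1$ and hence $t(\hat\phi_s)=-t(\phi_s)$ together with the (generic) identity $\dimF\Sel_{\phi_s}(E_s)-\dimF\Sel_{\hat\phi_s}(E_s')=t(\phi_s)$, and Theorem \ref{main} applied to both $\phi$ and $\hat\phi$ on sub-families where $t$ is controlled. But the final step, where you close the gap between $\avg_\Sigma\bigl(|t|+2\cdot 3^{-|t|}\bigr)$ and the claimed $\avg_\Sigma\bigl(|t|+3^{-|t|}\bigr)$, is genuinely incomplete, and the mechanism you propose for it does not work. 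You attribute the saving to ``additive constants in the exact sequence (the $-2$ and the $E[\phi](F)$, $E'[\hat\phi](F)$ contributions).'' For all but finitely many squareclasses $s$ one has $E_s[3](F)=E_s[\phi](F)=E_s'[\hat\phi](F)=0$, so these correction terms vanish on a density-one set and cannot improve an average; moreover your asserted bound $\rk E_s(F)\le \dimF\Sel_{\phi_s}(E_s)+\dimF\Sel_{\hat\phi_s}(E_s')-2$ is false in general (the $-2$ would require rational $3$-torsion on $E_s$). The correct refinement — this is the ``convexity bound'' the paper invokes from \cite[Thm.\ 43]{j=0} — is to not apply $\dim V\le |V|-1$ at all, but the sharper pointwise inequality $2d+1\le 3^d$ for integers $d\ge 0$: writing $d$ for the dimension of the short Selmer group, one has (generically) $\rk E_s(F)\le \dimF\Sel_{\phi_s}+\dimF\Sel_{\hat\phi_s}=|t(\phi_s)|+2d\le |t(\phi_s)|+3^{d}-1$, and averaging $3^{d}$ via Theorem \ref{main} gives exactly $|t|+3^{-|t|}$, with no factor of $2$ to remove.

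Two smaller points you should make explicit to have a complete argument. First, the choice of which Selmer group is ``short'' depends on the sign of $t(\phi_s)$, so you must partition $\Sigma$ into the sets $T_m(\phi)\cap\Sigma$ (or at least by the sign of $t$); these are defined by finitely many local conditions (this follows from the proof of Theorem \ref{main}, since $c_\p(\phi_s)=1$ for $\p\nmid 3\mathfrak{f}_A\infty$ by Theorem \ref{integrality}), so Theorem \ref{main} applies on each piece, and the global bound is obtained by recombining the pieces weighted by density — this is precisely how the paper proceeds. Second, the finitely many squareclasses where the torsion terms in Cassels' formula are nonzero have density zero and finite rank, so they do not affect the average; this disposes of the bookkeeping you were worried about, rather than supplying the missing saving.
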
      

Theorem \ref{avgrank} yields the first examples of elliptic curves over $\mathbb{Q}$ that do not have full rational two-torsion whose quadratic twists are known to have bounded average rank.  The case of elliptic curves over $\mathbb{Q}$ with full rational two-torsion is the aforementioned work of Kane~\cite{Kane}, which builds on prior work of Heath-Brown \cite{Heath-Brown} and Swinnerton-Dyer \cite{sd}.

Theorem \ref{avgrank} also yields the first examples of elliptic curves over number fields other than $\Q$ whose quadratic twists are known to have bounded average rank.  While Goldfeld's conjecture \cite{Goldfeld} on average ranks is only stated for quadratic twists of elliptic curves over $\mathbb{Q}$, it has a natural generalization to elliptic curves over number fields (for example, see \cite[Conjecture 7.12]{KMRann} and the ensuing discussion).  Theorem \ref{avgrank} provides the first progress toward this general version of Goldfeld's conjecture.  We note that when the twists $E_s$ are ordered in a non-standard way (essentially by the number of prime factors of $s$ and the largest such factor), the boundedness of average rank has been demonstrated by Klagsbrun, Mazur, and Rubin \cite{KMR} when $\mathrm{Gal}(F(E[2])/F) \simeq S_3$.

The exact value of the upper bound in Theorem \ref{avgrank} depends on the primes for which $E$ has bad reduction and can be computed explicitly for any given curve \cite{sagecode}.  For a ``typical'' elliptic curve with a 3-isogeny, the bound on the average 3-Selmer rank, and thus average rank, given in Theorem \ref{avgrank} is roughly on the order of $\sqrt{\log\log N_E}$, where $N_E$ is the norm of the conductor of $E$.  In particular, the upper bound on the average rank in Theorem~\ref{avgrank} can become arbitrarily large as the curves considered become more complicated.  

Nonetheless, we are still able to prove that there are many twists of small rank in all but the most pathological families.  Define, for each $m \geq 0$, the subset  
\[T_m(\phi) := \{s \in F^*/F^{*2} \colon |t(\phi_s)| = m \},\]
and let $\mu(T_m(\phi))$ denote the density of $T_m(\phi)$ within $F^*/F^{*2}$.

\begin{theorem}\label{globalbounds}
Let $E$ be an elliptic curve over~$F$ admitting a $3$-isogeny $\phi \colon E \to E'$.  Then 
\begin{enumerate}[$($a$)$]
\item The proportion of twists $E_s$ having rank $0$ is at least $\frac{1}{2}\mu(T_0(\phi))$; and
\item The proportion of twists $E_s$ having $3$-Selmer rank $1$ is at least $\frac{5}{6}\mu(T_1(\phi))$.    
\end{enumerate}
\end{theorem}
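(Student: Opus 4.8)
The plan is to combine Theorem~\ref{main}, applied both to $\phi$ and to its dual isogeny $\hat\phi_s\colon E'_s\to E_s$, with an elementary pigeonhole argument, after partitioning the family of twists according to the value of $t(\phi_s)$. Two structural facts are needed. First, $\hat\phi_s\circ\phi_s=[3]$ and the local Selmer ratios $c_\p$ are multiplicative under composition of isogenies, so $c(\phi_s)\,c(\hat\phi_s)=c([3]\colon E_s\to E_s)$; a short computation with the product formula shows this last quantity equals $1$ for every elliptic curve, whence $t(\hat\phi_s)=-t(\phi_s)$ and $T_m(\hat\phi)=T_m(\phi)$. Second, a standard consequence of global duality is the relation
\[
\frac{|\Sel_{\phi_s}(E_s)|}{|\Sel_{\hat\phi_s}(E'_s)|}=c(\phi_s)\cdot\frac{|E_s(F)[\phi_s]|}{|E'_s(F)[\hat\phi_s]|}=3^{\,t(\phi_s)}\cdot\frac{|E_s(F)[\phi_s]|}{|E'_s(F)[\hat\phi_s]|},
\]
in which both torsion factors equal $1$ unless $s$ lies in one of at most two fixed square classes (those for which $E_s[\phi_s]$, resp.\ $E'_s[\hat\phi_s]$, acquires an $F$-rational point), a set of density $0$. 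Finally, since $c_\p(\phi_s)=1$ for $\p\nmid 3\mathfrak{f}_E$ (Theorem~\ref{integrality}) and $c_\p(\phi_s)$ depends only on the image of $s$ in $F_\p^*/F_\p^{*2}$, each set $T_m(\phi)$, as well as each half $T_m^{\pm}(\phi):=\{s:t(\phi_s)=\pm m\}$, is defined by finitely many local conditions, so Theorem~\ref{main} (for $\phi$ or for $\hat\phi$) applies to it.

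For part~(a), take $\Sigma=T_0(\phi)$ in Theorem~\ref{main}. Since $c(\phi_s)\equiv 1$ on $T_0(\phi)$, the average of $|\Sel_{\phi_s}(E_s)|$ over $T_0(\phi)$ equals $2$; as this size is always a power of $3$, the proportion $p$ of $s\in T_0(\phi)$ with $\Sel_{\phi_s}(E_s)=0$ satisfies $p+3(1-p)\le 2$, i.e.\ $p\ge\tfrac12$. For such $s$ outside the two exceptional classes the displayed relation forces $\Sel_{\hat\phi_s}(E'_s)=0$ as well, so $E'_s(F)=\phi_s(E_s(F))$ and hence $E_s(F)=\hat\phi_s(E'_s(F))=[3]E_s(F)$, which forces $\rk E_s(F)=0$. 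Since the exceptional classes are negligible, a proportion at least $\tfrac12$ of the twists in $T_0(\phi)$ have rank $0$, which gives~(a).

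For part~(b), write $T_1(\phi)=T_1^+(\phi)\sqcup T_1^-(\phi)$; I will show that on each half a proportion at least $\tfrac56$ of the $s$ satisfy $\dimF\Sel_3(E_s)=1$. On $T_1^+(\phi)$ one has $t(\hat\phi_s)=-1$, so Theorem~\ref{main} applied to $\hat\phi$ yields $\avg|\Sel_{\hat\phi_s}(E'_s)|=1+3^{-1}=\tfrac43$; the same convexity bound as above gives $\Sel_{\hat\phi_s}(E'_s)=0$ for a proportion at least $\tfrac56$ of these $s$, and for such $s$ the displayed relation gives $|\Sel_{\phi_s}(E_s)|=3$, while the isogeny-descent exact sequence
\[
0\to\frac{E'_s(F)[\hat\phi_s]}{\phi_s(E_s(F)[3])}\to\Sel_{\phi_s}(E_s)\to\Sel_3(E_s)\to\Sel_{\hat\phi_s}(E'_s)
\]
then forces $\Sel_3(E_s)\cong\Sel_{\phi_s}(E_s)$, of dimension $1$ (the first term vanishing off the exceptional classes). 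On $T_1^-(\phi)$ apply Theorem~\ref{main} to $\phi$ itself: $\avg|\Sel_{\phi_s}(E_s)|=1+3^{-1}=\tfrac43$, so $\Sel_{\phi_s}(E_s)=0$ for a proportion at least $\tfrac56$; for such $s$ the displayed relation gives $\dimF\Sel_{\hat\phi_s}(E'_s)=1$, the above exact sequence with $E$ and $E'$ interchanged gives $\Sel_3(E'_s)\cong\Sel_{\hat\phi_s}(E'_s)$ of dimension $1$, and one concludes $\dimF\Sel_3(E_s)=\dimF\Sel_3(E'_s)=1$. Summing the two halves gives~(b).

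The one genuinely delicate step is the final implication on $T_1^-(\phi)$: deducing $\dimF\Sel_3(E_s)=1$, rather than merely $\le 1$, from $\Sel_{\phi_s}(E_s)=0$ --- equivalently, comparing $\Sel_3(E_s)$ with $\Sel_3(E'_s)$ across the $3$-isogeny. I would handle this either by feeding the two isogeny-descent exact sequences into Poitou--Tate global duality, which makes the two relevant cokernels have equal $\F_3$-dimension and hence $\dimF\Sel_3(E_s)=\dimF\Sel_3(E'_s)$ for all $s$ off the exceptional classes, or more concretely by observing that $\Sel_{\phi_s}(E_s)=0$ makes $\phi_{s,*}\colon\Sha(E_s)\to\Sha(E'_s)$ injective and then using that $E_s(F)$ and $E'_s(F)$ have trivial $3$-torsion off the exceptional classes to rule out $\Sel_3(E_s)=0$. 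Beyond this, the only remaining point is the routine check that the finitely many exceptional square classes --- together with any $s$ at which one of the implications above fails --- form a set of density $0$, so that they do not affect the proportions asserted.
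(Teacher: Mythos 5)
Your overall plan — partition the twists by the logarithmic Selmer ratio, apply Theorem~\ref{main} to both $\phi$ and $\hat\phi$, then pigeonhole — matches the paper's, and your part~(a) and the $T_1^+$ half of part~(b) are correct (your argument for~(a) via $\Sel_{\phi_s}(E_s)=0 \Rightarrow \Sel_{\hat\phi_s}(E'_s)=0 \Rightarrow \Sel_3(E_s)=0$ is in fact a slightly more elementary route than the paper's, which goes through the average $3$-Selmer rank bound). But the gap you flag in the $T_1^-$ case is a real one, and neither of your proposed repairs closes it as stated. From $\Sel_{\phi_s}(E_s)=0$ and $\dim_{\F_3}\Sel_{\hat\phi_s}(E'_s)=1$ the descent sequences only give $\dim_{\F_3}\Sel_3(E_s)\le 1$ and $\dim_{\F_3}\Sel_3(E'_s)=1$; nothing so far excludes $\Sel_3(E_s)=0$ with $\Sha(E_s)[3]=0$, $\Sha(E'_s)[\hat\phi]$ of dimension one, rank zero. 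Your first fix, that $\dim_{\F_3}\Sel_3(E_s)=\dim_{\F_3}\Sel_3(E'_s)$ off the exceptional classes, is too strong: only the \emph{parities} of those two dimensions agree in general, not the dimensions themselves, and your second fix (injectivity of $\phi_{s,*}$ on $\Sha$ plus trivial $3$-torsion) does not engage with the offending configuration at all, since there $\Sha(E_s)[3]$ is already trivial.

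What the paper uses to close exactly this gap is Cassels' parity theorem for the $3$-Selmer group, cited as \cite[Prop.\ 42(ii)]{j=0}: for $s\in T_m(\phi)$ one has $\dim_{\F_3}\Sel_3(E_s)\equiv m\pmod 2$. This is not a formal consequence of the descent exact sequences; it rests on the Cassels--Tate pairing and is a genuine additional input. Once you have it, $s\in T_1^-$ forces $\dim_{\F_3}\Sel_3(E_s)$ to be odd, which combined with your upper bound of $1$ gives equality. In fact the paper leans on this parity statement uniformly: it bounds the \emph{average} of $\dim_{\F_3}\Sel_3(E_s)$ over $T_m$ by $|m|+3^{-|m|}$ (by convexity, as in the proof of Theorem~\ref{avgrank}), notes that the parity is fixed on each $T_m$, and pigeonholes once — obtaining $p\ge\frac12$ on $T_0$ from ``even, average $\le 1$'' and $p\ge\frac56$ on $T_{\pm1}$ from ``odd, average $\le\frac43$.'' That avoids the separate $\Sel_\phi$-versus-$\Sel_{\hat\phi}$ bookkeeping you carry through, and it handles $T_1^+$ and $T_1^-$ identically. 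Your write-up should therefore add the parity theorem as an explicit hypothesis/input; with it, the $T_1^-$ branch (indeed, all branches) falls out immediately, and the rest of your argument stands.
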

If we assume the conjecture that $\dim_{\F_3}\Sha(E_s)[3]$ is even for all $s$, then part (b) implies that a proportion of at least $\frac56\mu_1(\phi)$ twists have rank $1$.   

For all but a pathological set of 3-isogenies $\phi$, the sets $T_0(\phi)$ and $T_1(\phi)$ are non-empty and hence have positive density.  In fact, over many fields, we show that these pathologies do not arise at all.

\begin{theorem}\label{totreal}
Let $E$ be an elliptic curve over~$F$ and suppose $E$ admits a $3$-isogeny.
\begin{enumerate}[$(i)$] 
\item If $F$ is totally real, then a positive proportion of $E_s$ have rank $0$ and a positive proportion of $E_s$ have $3$-Selmer rank $1$.
\item If $F$ has at most one complex place and at least one real place, then a positive proportion of $E_s$ have $3$-Selmer rank $1$.   
\item If $F$ is imaginary quadratic, then a positive proportion of $E_s$ have rank $0$ or $1$.
\end{enumerate}      
Furthermore, if $F$ has $r_2$ complex places, then there exists an elliptic curve $E$ over~$F$ admitting a $3$-isogeny and for which $\dim_{\mathbb{F}_3}\mathrm{Sel}_3(E_s) \geq r_2$ for all $s\in F^*/F^{*2}$.
\end{theorem}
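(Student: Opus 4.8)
The plan is to derive parts $(i)$--$(iii)$ from Theorem~\ref{globalbounds} by exhibiting, for the relevant $F$, a squareclass $s$ with a prescribed value of $|t(\phi_s)|$, and to prove the last assertion by an explicit construction. Write $r_1$ and $r_2$ for the numbers of real and complex places of $F$. The starting point is that, since $t(\phi_s)=\sum_{\p}\ord_3 c_\p(\phi_s)$ and $c_\p(\phi_s)=1$ for every finite $\p\nmid 3\mathfrak{f}_E$ by Theorem~\ref{integrality}, the integer $t(\phi_s)$ depends only on the images of $s$ in the groups $F_\p^*/F_\p^{*2}$ for $\p$ in the finite set $S=\{\,\p\mid 3\mathfrak{f}_E\infty\,\}$. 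Hence each set $T_m(\phi)$ is a finite union of sets defined by finitely many local conditions, and any such set has positive density as soon as it is non-empty: every finite collection of local conditions is realized by some global squareclass, since $F^*/F^{*2}\to\prod_{\p\in S}F_\p^*/F_\p^{*2}$ is surjective, and each local condition $\Sigma_\p$ has positive measure. So it is enough to show that $T_0(\phi)$, respectively $T_1(\phi)$, is non-empty in each of the listed cases.

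The next step is a place-by-place analysis of $\ord_3 c_\p(\phi_s)$. At a complex place $\p$ one has $E_s'(\C)/\phi_s E_s(\C)=0$ because $E_s(\C)$ is divisible, while $E_s[\phi_s](\C)$ has order~$3$; thus $\ord_3 c_\p(\phi_s)=-1$ for every $s$. At a real place $\p$ the cokernel $E_s'(\R)/\phi_s E_s(\R)$ embeds into $H^1(\R,E_s[\phi_s])$, which vanishes because $E_s[\phi_s]$ has odd order; hence $c_\p(\phi_s)=|E_s[\phi_s](\R)|^{-1}\in\{1,\tfrac13\}$, the value $\tfrac13$ occurring exactly when $E_s[\phi_s]$ has a non-trivial $\R$-point. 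As a $\Gal(\C/\R)$-module, $E_s[\phi_s]$ is the sign-character twist of $E[\phi]$ precisely when $s$ is negative at $\p$, and an order-$3$ module over $\Gal(\C/\R)$ has a non-trivial invariant exactly when its sign twist does not; so both values $0$ and $-1$ are attained as $s$ varies, independently at each real place. Finally, at a finite $\p\mid 3\mathfrak{f}_E$ the local computation of $c_\p(\phi_s)$ shows there is a squareclass $s_\p$ with $c_\p(\phi_{s_\p})=1$: for $\p\nmid 3$ this uses the fact that $c_\p(\phi_s)$ is the ratio of the local Tamagawa numbers of $E_s'$ and $E_s$ together with the possible reduction types of quadratic twists, and for $\p\mid 3$ it rests on the analysis of \cite{j=0}. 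Combining these, the set $\{t(\phi_s):s\in F^*/F^{*2}\}$ contains $-r_2+a$ for every integer $a$ with $-r_1\le a\le 0$.

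Parts $(i)$--$(iii)$ follow immediately. If $F$ is totally real (so $r_2=0$, $r_1\ge 1$), then $a=0$ gives $t(\phi_s)=0$ and $a=-1$ gives $t(\phi_s)=-1$, so $T_0(\phi)$ and $T_1(\phi)$ are both non-empty, whence by Theorem~\ref{globalbounds} a positive proportion of the $E_s$ have rank $0$ and a positive proportion have $3$-Selmer rank $1$. If $F$ has exactly one complex place and at least one real place (so $r_2=1$, $r_1\ge 1$), then $a=0$ gives $t(\phi_s)=-1$, so $T_1(\phi)\ne\emptyset$ and a positive proportion of the $E_s$ have $3$-Selmer rank $1$. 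If $F$ is imaginary quadratic (so $r_1=0$, $r_2=1$), then $t(\phi_s)=-1$ is again attained, $T_1(\phi)\ne\emptyset$, and since $3$-Selmer rank~$1$ forces $\rk E_s(F)\le 1$, a positive proportion of the $E_s$ have rank $0$ or~$1$.

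For the final assertion I would produce an explicit curve. The mechanism is that every complex place contributes $\ord_3 c_\p(\phi_s)=-1$ regardless of $s$; choosing $E$ so that the contributions of the real places and of the primes above $3$ cannot offset this, one arranges $|t(\phi_s)|\ge r_2$ for all $s$. Feeding this into the Greenberg--Wiles comparison
\[
\dim_{\F_3}\Sel_{\phi_s}(E_s)-\dim_{\F_3}\Sel_{\hat\phi_s}(E_s')=t(\phi_s)+\dim_{\F_3}E_s[\phi_s](F)-\dim_{\F_3}E_s'[\hat\phi_s](F),
\]
together with the natural maps $\Sel_{\phi_s}(E_s)\to\Sel_3(E_s)$ and $\Sel_{\hat\phi_s}(E_s')\to\Sel_3(E_s')$, whose kernels have $\F_3$-dimension at most $1$, then yields $\dim_{\F_3}\Sel_3(E_s)\ge r_2$ for all $s$, after possibly replacing $E$ by $E'$ and treating separately the finitely many squareclasses at which $\ker\phi_s$ or $\ker\hat\phi_s$ acquires an $F$-rational point. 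A convenient source of such curves is a suitable quadratic twist or base change of a $j=0$ curve of conductor~$27$, whose local behavior at $3$ is worked out in \cite{j=0}. I expect the main obstacle throughout to be precisely this local analysis at the primes above $3$: both the existence of a twist with $c_\p(\phi_{s_\p})=1$ there and the sharp bound $r_2$ in the last statement hinge on the structure of the finite flat group scheme $\ker\phi_s$ over $\mathcal{O}_{F_\p}$, whereas the archimedean bookkeeping and the passage from $\phi_s$-Selmer groups to $3$-Selmer groups are comparatively routine.
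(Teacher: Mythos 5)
Your place-by-place analysis at the archimedean places is correct and matches the paper's Proposition \ref{R}, but there is a genuine gap in your treatment of the primes above $3$, and it propagates into an overclaim in part $(iii)$.

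You assert that at every finite $\p\mid 3\mathfrak{f}_E$ one can find a local squareclass $s_\p$ with $c_\p(\phi_{s_\p})=1$. For $\p\nmid 3$ this is Proposition \ref{twist1} and is correct. But for $\p\mid 3$ the paper explicitly notes that Proposition \ref{twist1} can fail, and the substitute it proves is Corollary \ref{cong}: there exists $s$ with $1\leq c_\p(\phi_s)\leq 3^{[F_\p:\Q_3]}$, not necessarily $c_\p(\phi_s)=1$. The obstruction comes from the factor $\alpha_{\phi_s,F_\p}=|\phi_s'(0)|^{-1}$ in Proposition \ref{lem:tamag}, which is always $\geq 1$ and can be forced strictly larger than $1$ for every twist $s$ (e.g.\ in the potentially ordinary case with $E[\phi](K)\subset\hat E(\mathfrak{m}_K)$ one gets $\alpha=3^{[F_\p:\Q_3]}$). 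As a result your conclusion that $\{t(\phi_s)\}$ contains $-r_2+a$ for all $-r_1\leq a\leq 0$ does not follow. The paper's proof of parts $(i)$ and $(ii)$ has to budget for a possibly positive contribution $1\leq c_3(\phi_s)\leq 3^{[F:\Q]}$ from the primes above $3$, and the whole point of the archimedean bookkeeping is that the range of $c_\infty(\phi_s)$ is exactly large enough to cancel that contribution and hit $t=0$ (totally real case) or $|t|=1$ (one complex place, degree $>2$). Your simpler argument circumvents this because the false lemma makes the $3$-adic term vanish. This matters most in part $(iii)$: for $F$ imaginary quadratic one has $c_\infty=1/3$ fixed and $c_3\in\{1,3,9\}$, so one can only guarantee $t\in\{-1,0,1\}$, i.e.\ $T_0\cup T_1\neq\emptyset$. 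You claim $T_1\neq\emptyset$, which is strictly stronger than what is provable by this method and stronger than what the theorem asserts (which is ``rank $0$ or $1$'', consistent with landing in $T_0$).

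For the final assertion, your mechanism (complex places always contribute $-1$, choose $E$ so the other places cannot offset) is the wrong sign of bookkeeping. In the paper's Example \ref{ex:infinite families} the primes above $3$ are arranged to contribute a \emph{large positive} power of $3$ (at least $3^{[F:\Q]}$ in total), which overwhelms the archimedean contribution of $3^{-(r_1+r_2)}$ and yields $c(\phi_s)\geq 3^{r_2}$ for all $s$; the curves used, $E_k\colon y^2=x^3+(4^k3^8+27)(x+4)^2$, are not $j=0$ curves of conductor $27$. Moreover, your passage from the $\phi$-Selmer bound to the $3$-Selmer bound loses a constant: combining Cassels' formula with the exact sequence \eqref{fiveterm} naively gives $\dim_{\F_3}\Sel_3(E_s)\geq r_2-2$. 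The paper recovers the sharp bound by observing that the kernel of $\Sel_{\phi_s}(E_s)\to\Sel_3(E_s)$ is nonzero for at most one squareclass $s$ (the one with $E_s'(F)[\hat\phi]\neq 0$), and separately showing that for that exceptional $s$ the reduction at $\p\mid 6$ is split multiplicative, forcing $c(\phi_s)\geq 3^{r_2+2}$ and hence still $\dim\Sel_3(E_s)\geq r_2$. Your sketch mentions treating the exceptional squareclasses separately but does not supply the extra $+2$ in the Selmer ratio that makes the argument close.
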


Recently and independently, for elliptic curves over $\mathbb{Q}$ with a $3$-isogeny, Kriz and Li \cite{krizli} have given a different proof of the fact that a positive proportion of twists $E_s$ have rank 0 and rank 1.  In fact, their beautiful results in the rank 1 case are unconditional, but the lower bounds on the proportions of rank 0 and rank 1 are smaller than ours, and their method does not give a bound on the {\it average} rank. Previously, there have been a number of special cases of such rank 0 and rank 1 results proved by various authors, using both analytic and algebraic methods; see \cite{chang,James,kriz,zkli,Vatsal,Wang}.     

The proportions provided by Theorem \ref{globalbounds} will naturally be largest when every $s \in F^*/F^{*2}$ is either in $T_0(\phi)$ or $T_1(\phi)$.  The elliptic curve of smallest conductor over $\mathbb{Q}$ for which this occurs is the curve $E\colon y^2 + y = x^3 + x^2 + x$ having Cremona label 19a3.  
Since the parity of $t(\phi_s)$ is easily seen to be equidistributed in quadratic twist families over $\Q$, we deduce that, in this special situation, half of the squareclasses lie in $T_0(\phi)$ and half lie in $T_1(\phi)$.  Thus, by Theorem \ref{avgrank}, the average rank of $E_s$ is at most 7/6, and, by Theorem \ref{globalbounds}, at least $25\%$ of twists $E_s$ have rank 0 and at least $41.6\%$ have $3$-Selmer rank 1.  

Over general number fields, the parity of $t(\phi_s)$ is not necessarily equidistributed in quadratic twist families, and so we can obtain larger proportions of rank 0 or $3$-Selmer rank 1 curves in certain families.   For instance, suppose $E$ is an elliptic curve over a number field $F$ that has complex multiplication by an imaginary quadratic field $K$ contained in $F$, and that $3$ is not inert in~$K$.  Then the curve $E$ is isogenous to a curve $E_0$ with $\End( E_0) \simeq \O_K$, and $E_0$ admits a 3-isogeny $\phi$ to $E_0/E_0[\p]$, where $\p \subset \O_K$ is an ideal of norm 3.  We show in Section \ref{sec:appl-to-rank} that every $s\in F^*/F^{*2}$ then lies in $T_0(\phi)$ for this isogeny $\phi$.  By Theorem \ref{globalbounds}, we thus obtain:

\begin{theorem}\label{cm}
Suppose that $E$ is an elliptic curve over a number field $F$ such that $\End_F(E) \otimes \Q$ is an imaginary quadratic field in which $3$ is not inert.  Then the average rank of $E_s(F)$ for $s \in F^*/F^{*2}$ is at most $1$, and at least $50\%$ of twists $E_s$ have rank $0$.   
\end{theorem}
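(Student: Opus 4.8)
The plan is to replace $E$ by an $F$-isogenous curve with $\End_F=\O_K$, to manufacture from a prime of $\O_K$ of norm $3$ a $3$-isogeny $\phi$ whose logarithmic Selmer ratio $t(\phi_s)$ vanishes for \emph{every} twist $s$, and then to read off the two conclusions from Theorems~\ref{avgrank} and~\ref{globalbounds}. For the reduction: writing $\O:=\End_F(E)$ for the order in question and $\mathfrak{c}$ for its conductor ideal in $\O_K$, the quotient $E_0:=E/E[\mathfrak{c}]$ is $F$-isogenous to $E$ and satisfies $\End_F(E_0)=\O_K$. Quadratic twisting commutes with isogeny, so $(E_0)_s$ is $F$-isogenous to $E_s$ for every $s\in F^*/F^{*2}$; hence $\rk E_s(F)=\rk (E_0)_s(F)$ for all $s$, and—since the height $H(s)$ does not involve the curve—the families $\{E_s\}$ and $\{(E_0)_s\}$ have the same average rank and the same proportion of rank-$0$ members in the ordering by $H$. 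Thus I may assume $\End_F(E)=\O_K$ from now on.

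Because $3$ is not inert in $K$, there is an ideal $\p\subset\O_K$ of norm $3$ (were $3$ inert, $\O_K$ would contain no element of norm $3$ and $E$ would admit no $3$-isogeny at all). Since $\O_K$ acts on $E$ over $F$, the subgroup scheme $E[\p]$ is $F$-rational, so $\phi\colon E\to E':=E/E[\p]$ is a $3$-isogeny over $F$; the target $E'$ again has CM by $\O_K$ over $F$ (as $E[\p]$ is $\O_K$-stable), the dual isogeny $\hat\phi\colon E'\to E$ is the one attached to $\bar\p$, and $\hat\phi\circ\phi=[3]$. The crux of the proof is then the assertion—carried out in Section~\ref{sec:appl-to-rank}—that $t(\phi_s)=0$, i.e.\ $c(\phi_s)=1$, for \emph{all} $s\in F^*/F^{*2}$, so that $T_0(\phi)=F^*/F^{*2}$ and $\mu(T_0(\phi))=1$.

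I would prove this uniform vanishing in two parts. First, local Selmer ratios are multiplicative along a composition of isogenies, so $c(\phi_s)\,c(\hat\phi_s)=c([3]_{E_s})$; and a place-by-place local Euler-characteristic computation—using that an elliptic curve carries a canonical principal polarization, so that $E_s[3]$ is self-dual for the Weil pairing—shows $c([3]_{E})=1$ for \emph{every} elliptic curve over \emph{every} number field, the contributions at the primes above $3$ cancelling those at the archimedean places by the product formula. Second, the complex-multiplication structure forces $c(\phi_s)=c(\hat\phi_s)$: the $F$-isogenous CM curves $E_s$ and $E'_s$ share a Hecke character $\psi_s$, so the Galois modules $E_s[\phi]=E_s[\p]$ and $E'_s[\hat\phi]=E'_s[\bar\p]$ are cut out by $\psi_s\bmod\p$ and $\psi_s\bmod\bar\p$ respectively; these are interchanged by the conjugation $\p\leftrightarrow\bar\p$ on $\O_K/3\O_K$, and tracing this symmetry through the local Selmer ratios—only finitely many of which, those at the primes dividing $3\f_E$, differ from $1$ by Theorem~\ref{integrality}—shows that the global products $c(\phi_s)$ and $c(\hat\phi_s)$ coincide. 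Combining the two parts, $c(\phi_s)^2=1$; since $c(\phi_s)$ is a positive element of $3^\Z$, this gives $c(\phi_s)=1$.

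Granting $\mu(T_0(\phi))=1$, the theorem follows at once. Taking $\Sigma=F^*/F^{*2}$ (defined by the empty collection of local conditions) in Theorem~\ref{avgrank}, the average rank of $(E_0)_s(F)$ is at most $\avg_\Sigma\big(|t(\phi_s)|+3^{-|t(\phi_s)|}\big)=0+1=1$; and Theorem~\ref{globalbounds}(a) gives that a proportion of at least $\tfrac{1}{2}\mu(T_0(\phi))=\tfrac{1}{2}$ of the $(E_0)_s$ have rank $0$. Transporting both assertions back through the isogeny $(E_0)_s\sim E_s$ of the first step finishes the proof. The main obstacle is the uniform vanishing $t(\phi_s)\equiv0$, and within it the identity $c(\phi_s)=c(\hat\phi_s)$, which is precisely where the hypothesis that $3$ splits or ramifies in the CM field is used; the reductions and the final deduction are formal.
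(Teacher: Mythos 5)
Your overall skeleton matches the paper's: reduce to $\End_F(E)\simeq\O_K$ via an $F$-isogeny, build $\phi$ from a norm-$3$ ideal $\p\subset\O_K$, show $c(\phi_s)=1$ for every $s$ so that $T_0(\phi)=F^*/F^{*2}$, and conclude from Theorems \ref{avgrank} and \ref{globalbounds}. Your ``Part 1'' is also fine: $c(\phi_s)\,c(\hat\phi_s)=1$ holds for any $3$-isogeny of elliptic curves over a number field (the paper gets it from Cassels' formula \eqref{cassels formula}; your Euler-characteristic/product-formula computation of $c([3])=1$ is an equivalent route). The problem is ``Part 2'': the identity $c(\phi_s)=c(\hat\phi_s)$ is asserted via a ``conjugation symmetry $\p\leftrightarrow\bar\p$ of the Galois modules cut out by the Hecke character,'' but local Selmer ratios are not functions of the kernel as a Galois module. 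By Proposition \ref{lem:tamag} and Corollary \ref{cor:tamag}, at a finite place $v\nmid 3$ one has $c_v(\phi_s)=c_v(E'_s)/c_v(E_s)$ and $c_v(\hat\phi_s)=c_v(E_s)/c_v(E'_s)$, so comparing them requires controlling Tamagawa numbers of the two isogenous curves, about which your symmetry says nothing; at $v\mid 3$ the factors $\alpha_{\phi,F_v}$ and $\alpha_{\hat\phi,F_v}$ are generically \emph{unequal} (e.g.\ $1$ versus $3^{d_v}$ in the ordinary case of Theorem \ref{3adic}), so one must actually prove that the global products balance; and when $3$ ramifies in $K$ one has $\p=\bar\p$, the symmetry is vacuous, and the reduction above $3$ is potentially supersingular---the hardest entry of Table \ref{tab:values of alpha}. (Also note $E'_s[\hat\phi]$ is the Cartier dual of $E_s[\phi]$, so the two characters differ by the mod-$3$ cyclotomic character; they are not simply ``interchanged.'')

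What your sentence ``tracing this symmetry through the local Selmer ratios'' glosses over is precisely the content of the paper's Theorem \ref{cm1}, which proves $c(\phi_s)=1$ directly, place by place: (i) $c_v(E_s)=c_v(E'_s)$ at every finite place, via Lemma \ref{twist} (Dokchitser--Dokchitser norm-index lemmas at $p=2,3$, Theorem \ref{integrality} at $p>3$), using Rubin's fact that some twist has good reduction and treating $j=0,1728$ separately; (ii) the $3$-adic computation $\prod_{v\mid 3}\alpha_{\phi,F_v}=3^{[F:K]}$, obtained by composing $\phi$ with an auxiliary $\ell$-isogeny so that $\lambda\circ\phi=a\in\O_K$ with $\Nm_{K/\Q}(a)=3\ell$ and $\alpha_{\phi,v}=|a|_v^{-1}$; (iii) the archimedean factor $(1/3)^{r_2}$ with $r_2=[F:K]$, using that $K\subset F$ forces $F$ totally imaginary (Proposition \ref{R}). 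These three inputs are what make the product collapse to $1$; without them (or an equally precise substitute) your key claim $c(\phi_s)=c(\hat\phi_s)$ is unproved, so the proposal has a genuine gap at its central step, even though the reduction, the construction of $\phi$, and the final deduction from Theorems \ref{avgrank} and \ref{globalbounds} are correct and coincide with the paper's.
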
  

In the setting of Theorem \ref{cm}, Goldfeld's minimalist philosophy \cite{Goldfeld} would predict that $100\%$ of twists have rank $0$ since they all have even parity.  In Section \ref{examples}, we give an example of a non-CM elliptic curve with a $3$-isogeny whose twists all have even parity and for which we derive the same conclusion as Theorem \ref{cm}.  We also provide an example of a curve whose twists all have odd parity and for which we may deduce that $5/6$ of twists have $3$-Selmer rank $1$, which we take as strong evidence in favor of the conjecture that $100\%$ of twists of this curve should have rank $1$.

%  Our result is strong evidence for this philosophy, since we deduce that at least half of all twists have rank 0, simply from an upper bound on the average rank of the 3-Selmer group $\Sel_3(E_s)$.  If we could access the average rank of the 5-Selmer groups, and 7-Selmer groups, and so on, then we would presumably get bounds tending to $100\%$.    
%This result is strong evidence that the average rank of the quadratic twists of an elliptic curve $E$ over~$F$ with complex multiplication (defined over~$F$) is 0.  
%By a formula of Cassels \cite[Prop.\ 42(ii)]{j=0} and work of the Dokchitser brothers \cite[Thm.\ 1.8]{DD2}, the parity of $t(\phi_s)$ is equal to the root number of $E_s$.
%In Section \ref{examples}, we also give an example of a curve without complex multiplication for which every $s \in F^*/F^{*2}$ lies in $T_0(\phi)$, and hence has root number $+1$.  We also give an example of a curve for which every $s\in F^*/F^{*2}$ lies in $T_1(\phi)$, and hence has root number $-1$.

Finally, we note that there are a number of other applications of Theorem \ref{main}, e.g., to the existence of elements of various orders in Tate--Shafarevich groups, which we pursue in forthcoming work.

\subsection*{Methods and organization}

In Sections 2--6, we develop a new correspondence between $\phi$-Selmer elements (in fact, arbitrary $\phi$-coverings) and certain binary cubic forms with coefficients in the base field $F$.  This bijection recovers work of Selmer \cite{selmer} and Satg\'e \cite{satge} in the very special case when $A$ is an elliptic curve over $\Q$ with $j$-invariant 0.  Our correspondence is new for elliptic curves with $j \neq 0$, and for higher dimensional abelian varieties having a 3-isogeny $\phi$.  In Section \ref{integerorbits}, we show, moreover, that integral models exist for $\phi$-Selmer elements, i.e., $\phi$-Selmer elements correspond to integral binary cubic forms.  This may be thought of as a generalization of the $j = 0$ case proven by the first and last authors and Elkies in \cite{j=0}, though the general case treated here indeed exhibits many important differences and subtleties in the `minimization' results.  

Our parametrization of $\phi$-Selmer elements by integral binary cubic forms has the property that $\phi_s$-Selmer elements for different $s\in F^*/F^{*2}$ yield integral binary cubic forms having discriminants that lie in different classes in $F^*/F^{*2}$ as well; moreover, we prove that these discriminants are {\it squarefree away from a fixed finite set of primes}.  This property is what allows us to use results on counting binary cubic forms to get a handle on the count of $3$-isogeny Selmer elements across a family of quadratic twists.  Indeed, in Section \ref{counting}, we combine this new parametrization of $\phi$-Selmer elements with the geometry-of-numbers techniques developed in recent work of the first author, Shankar, and Wang \cite{BSW2}, which enables one to count integral binary cubic forms over a number field satisfying properties of the type described.  This yields Theorem~\ref{main}.  We prove Theorem \ref{ab var ranks} in Section \ref{sec:abvarranks} as a consequence, and also prove Theorems \ref{avgrank} and \ref{globalbounds}.  In Sections \ref{local computations}--\ref{sec:appl-to-rank}, we consider some further applications to ranks of elliptic curves and, in particular, prove Theorems \ref{totreal} and \ref{cm}. In Section \ref{superelliptic}, we show that the Jacobians of a large family of trigonal curves fall under the scope of Theorem \ref{ab var ranks}.  Finally, we use Section \ref{examples} to highlight some examples where our results yield particularly interesting conclusions.

\section{Orbits of binary cubic forms over a Dedekind domain}

We recall from \cite[\S2]{j=0} some facts about orbits of binary cubic forms.  Let $V(\Z) = \Sym_3 \Z^2$ be the lattice of integer-matrix binary cubic forms, i.e., forms 
\[f(x,y) = ax^3 + 3bx^2y + 3cxy^2 + dy^3\] with $a,b,c,d\in\Z$.  Equivalently, $V(\Z)$ is the space of integer symmetric trilinear forms.  The group $\GL_2(\Z)$ acts naturally on $V(\Z)$ by linear change of variable, and we define the {\it discriminant} by
\[\Disc(f) := a^2d^2 -3b^2c^2 +4ac^3 + 4b^3d -6abcd.\]
For any ring $R$, we write $V(R) := V(\Z) \otimes R$.  The action of $\GL_2(R)$ on $V(R)$ satisfies
\[\Disc(g\cdot f) = \det(g)^6 \Disc(f),\]
for all $g \in \GL_2(R)$ and $f \in V(R)$. In particular, the discriminant is $\SL_2(R)$-invariant.  For any $\Delta \in R$, we write $V(R)_\Delta$ for the set of $f \in V(R)$ with $\Disc(f) = \Delta$.  

Now let $R$ be a Dedekind domain of characteristic not 2 or 3, and let $F$ be its field of fractions.  For any non-zero $D \in R$, we define the ring $S = R[t]/(t^2 - D)$ and let $K = F[t]/(t^2 - D)$ be its total ring of fractions.  

\begin{theorem}\cite[Thm.\ 12]{j=0}\label{cubicbij}
Let $D \in R$ be non-zero.  Then there is a bijection between the orbits of $\SL_2(R)$ on $V(R)_{4D}$ and equivalence classes of triples $(I, \delta, \alpha)$, where $I$ is a fractional $S$-ideal, $\delta \in K^*$, and  $\alpha \in F^*$, satisfying the relations $I^3 \subset \delta S$, $N(I)$ is the principal fractional ideal $\alpha R$ in $F$, and $N(\delta) = \alpha^3$ in $F^*$.  Two triples $(I, \delta, \alpha)$ and $(I', \delta', \alpha')$ are equivalent if there exists $\kappa \in K^*$ such that $I '=  \kappa I$, $\delta' = \kappa^3 \delta$, and $\alpha' = N(\kappa)\alpha$.  Under this correspondence, the stabilizer in $\SL_2(R)$ of $f \in V(R)_{4D}$ is isomorphic to $S(I)^*[3]_{N = 1}$, where $S(I)$ is the ring of endomorphisms of $I$. 
\end{theorem}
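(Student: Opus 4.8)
The plan is to establish the bijection by writing down an explicit map from equivalence classes of triples to $\SL_2(R)$-orbits on $V(R)_{4D}$, together with an explicit inverse, and then to verify that the two are mutually inverse by reducing to the case of a complete discrete valuation ring. As a preliminary sanity check on the shape of the answer, note that $V(F)_{4D}$ is always nonempty---for instance $Dx^3+3xy^2$ has discriminant $4D$---so fixing such a base point $f_0$ identifies $\SL_2(F)\backslash V(F)_{4D}$ with $H^1(F,\mathrm{Stab}_{\SL_2}(f_0))$, since $H^1(F,\SL_2)$ is trivial; the content of the theorem is to promote this to a statement over the Dedekind domain $R$ that tracks integral structures and renders the cohomology set concrete in terms of quadratic ideal data.

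For the forward map, let $(I,\delta,\alpha)$ be a triple. The fractional $S$-ideal $I$ is finitely generated and torsion-free of rank $2$ over the Dedekind domain $R$, hence locally free of rank $2$, and multiplication inside $K$ gives an $R$-linear map $m\colon\Sym^3_R I\to I^3$; since $I^3\subseteq\delta S$ we may form $\delta^{-1}m\colon\Sym^3_R I\to S$. Composing with the $R$-linear functional $a+bt\mapsto b$ on $S=R[t]/(t^2-D)$ yields an $R$-valued binary cubic form on $I$, whose middle coefficients turn out to be divisible by $3$---so that it lands in $V(R)=\Sym_3 R^2$ after a choice of $R$-basis of $I$---this divisibility being forced by the interaction of $m$ with the trace form on $S$. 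The condition $N(I)=\alpha R$ trivializes $\wedge^2_R I$ and thereby pins the basis of $I$ down up to $\SL_2(R)$ rather than $\GL_2(R)$, giving a well-defined $\SL_2(R)$-orbit; a direct computation, in which the normalization $N(\delta)=\alpha^3$ is exactly what prevents an extra square factor from appearing, shows that its discriminant equals $4D$. Finally, passing to an equivalent triple $(\kappa I,\kappa^3\delta,N(\kappa)\alpha)$ multiplies $m$ by $\kappa^3$, which is undone by $\delta^{-1}$, and rescales the chosen basis within its $\SL_2(R)$-class, so the map descends to equivalence classes.

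For the inverse one runs this construction backwards. Given $f\in V(R)_{4D}$, the (suitably normalized, integral) Hessian $H(f)$ is a binary quadratic form of discriminant $4D$, so the quadratic ring it defines is $S$; combining $f$, $H(f)$, the Jacobian of $f$ and $H(f)$, and the classical syzygy relating these covariants recovers a fractional $S$-ideal $I$ together with the maps underlying $\delta$ and $\alpha$. Equivalently, one may package the construction through the Delone--Faddeev cubic ring $C_f$, whose discriminant is $\Disc(f)=4D$, and its quadratic resolvent; the proper containment $I^3\subsetneq\delta S$ then corresponds to $f$ being non-primitive, equivalently to $C_f$ being non-maximal. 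The main obstacle, and the bulk of the work, is to verify that these two maps are mutually inverse and well-defined on equivalence classes; carrying this out integrally---especially at primes of $R$ of residue characteristic $2$ or $3$---is where the subtleties lie. I would argue by localization: the formation of $H(f)$, of $S$, of $I$, and of the orbit space all commute with localizing and completing $R$ at a prime $\mathfrak p$, so it suffices to treat a complete DVR, where $f$ can be brought to a normal form by an $\SL_2$-transformation and both round-trips computed by hand; the case analysis is organized by $v_{\mathfrak p}(D)$ and by the content of $f$. Once the bijection is established, the stabilizer statement follows by tracking self-equivalences: an element of $\mathrm{Stab}_{\SL_2(R)}(f)$ corresponds to a $\kappa\in K^*$ with $\kappa I=I$, $\kappa^3=1$, and $N(\kappa)=1$---that is, to an element of $S(I)^*[3]_{N=1}$.
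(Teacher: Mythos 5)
Your forward map is the standard one and is essentially sound: the condition $N(I)=\alpha R$ trivializes $\wedge^2_R I$, so $I$ is actually free of rank $2$ over $R$ with a basis pinned, via the orientation, up to $\SL_2(R)$; the form is the $t$-coordinate of $(x\omega+y\theta)^3/\delta$; and the integer-matrix property is automatic because this is the restriction of an $R$-valued symmetric trilinear form on $I$ (it is the multinomial coefficients $1,3,3,1$ that do the work here, not the trace form). The stabilizer identification at the end is also correct. The genuine gap sits exactly where you yourself locate ``the bulk of the work'': you propose to verify that the two maps are mutually inverse by completing at each prime and doing a normal-form case analysis, but neither side of the correspondence is a local notion. $\SL_2(R)$-equivalence of forms and equivalence of triples (existence of a \emph{single} $\kappa\in K^*$ with $I'=\kappa I$, $\delta'=\kappa^3\delta$, $\alpha'=N(\kappa)\alpha$) are global conditions: two forms, or two triples, can be equivalent over every completion $R_\p$ yet lie in distinct global classes --- for forms this is the genus-versus-class issue, which would require a strong-approximation/class-number-one input that is neither available over an arbitrary Dedekind domain (the stated generality) nor invoked by you, and for triples the obstruction is an ideal class of $R$. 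So ``it suffices to treat a complete DVR'' is unjustified as stated, and the promised case analysis is never carried out. The verification should instead be a direct global computation with the oriented basis: the Hessian supplies the quadratic module structure, the syzygy $(g/3)^2=\Disc(f)f^2+4h^3$ produces $\delta$ with $N(\delta)=\alpha^3$, and both round trips are checked identically in the coefficients, with no separate treatment of primes above $2$ or $3$ (these are absorbed by the integer-matrix shape and the discriminant-$4D$ normalization). Note also that the present paper does not prove this statement at all; it imports it from the cited source, which proceeds by such an explicit computation.

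A secondary error: the ``equivalent packaging'' via the Delone--Faddeev cubic ring is off. For an integer-matrix form $f$ with $\Disc(f)=4D$ in the paper's normalization, the Delone--Faddeev ring has discriminant $-27\,\Disc(f)=-108D$, not $4D$; its quadratic resolvent is the mirror algebra $F[t]/(t^2+3D)$ up to squares, not $S$, so as described this route recovers the wrong quadratic ring. Moreover ``$f$ non-primitive $\Leftrightarrow$ $C_f$ non-maximal'' is false (e.g.\ $x^3-4y^3$ is primitive with non-maximal associated ring), and strictness of $I^3\subset\delta S$ is governed by non-invertibility of $I$ (i.e.\ $S(I)\supsetneq S$) and the content of $f$, not by maximality of a cubic ring. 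Since this aside is offered as an alternative construction of the inverse, it should be dropped or corrected; the Hessian route is the right one, its discriminant being exactly $\Disc(f)=4D$.
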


\begin{definition}{\em 
We call triples $(I, \delta, \alpha)$ satisfying the above conditions {\it $S$-triples}.}
\end{definition}

When $R$ is a field, so that $R = F$, the previous result simplifies quite a bit. We write $(K^*/K^{*3})_{N =1}$ to denote the kernel of the norm map $K^*/K^{*3} \to F^*/F^{*3}$.  We also write $(\Res^K_F  \mu_3)_{N = 1}$ for the kernel of the norm map $\Res^K_F \, \mu_3 \to \mu_3$, where $\Res^K_F\, \mu_3$ is the restriction of scalars of the group scheme $\mu_3$.  

\begin{corollary}\label{bij}
There is a bijection between $\left(K^*/K^{*3}\right)_{N =1}$ and the set of $\SL_2(F)$-orbits of $V(F)_{4D}$.  Moreover, the stabilizer inside $\SL_2(F)$ of any $f \in V(F)_{4D}$ is isomorphic to $(\Res_F^K \, \mu_3)_{N = 1}$.     
\end{corollary}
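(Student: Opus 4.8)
The plan is to deduce Corollary~\ref{bij} from Theorem~\ref{cubicbij} by specializing $R$ to the field $F$ and simplifying the classification of $S$-triples. First I would note that with $R = F$, the ring $S = F[t]/(t^2-D)$ equals $K$ itself, so every nonzero fractional $S$-ideal $I$ is principal, say $I = \beta S = \beta K$ for some $\beta \in K^*$. Using the equivalence relation $(I,\delta,\alpha) \sim (\kappa I, \kappa^3\delta, N(\kappa)\alpha)$ with $\kappa = \beta^{-1}$, I may normalize to $I = S = K$. After this normalization the condition $I^3 \subset \delta S$ becomes $S \subset \delta S$, i.e.\ $\delta \in S^* = K^*$ (it is automatic since $\delta \in K^*$ already), the condition $N(I) = \alpha R$ forces $\alpha \in R^* = F^*$, and the condition $N(\delta) = \alpha^3$ becomes the only remaining constraint. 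So after normalization an $S$-triple is a pair $(\delta,\alpha) \in K^* \times F^*$ with $N(\delta) = \alpha^3$; but $\alpha$ is then determined by $\delta$ up to a cube root of unity $\zeta \in \mu_3(F)$, i.e.\ up to the ambiguity $\alpha \mapsto \zeta\alpha$.

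Next I would track the residual equivalence. Having normalized $I = S$, the $\kappa$ that preserve this normalization are exactly those with $\kappa S = S$, i.e.\ $\kappa \in S^* = K^*$; such a $\kappa$ sends $(\delta,\alpha)$ to $(\kappa^3\delta, N(\kappa)\alpha)$. Hence the set of normalized triples modulo equivalence is the set of pairs $(\delta,\alpha)$ with $N(\delta)=\alpha^3$, modulo the relation $(\delta,\alpha)\sim(\kappa^3\delta,N(\kappa)\alpha)$ for $\kappa \in K^*$, further quotiented so that the resulting class of $\delta$ lives in $K^*/K^{*3}$. The map $(\delta,\alpha)\mapsto \delta \bmod K^{*3}$ is well-defined, lands in $(K^*/K^{*3})_{N=1}$ because $N(\delta) = \alpha^3 \in F^{*3}$, and is surjective: given $\delta$ with $N(\delta) = \alpha_0^3$ for some $\alpha_0 \in F^*$, the pair $(\delta,\alpha_0)$ is a valid triple. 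For injectivity, if $\delta' = \kappa_0^3\delta$ with $\kappa_0 \in K^*$ then $(\delta',\alpha')\sim(\delta,\alpha)$ after adjusting by $\kappa_0$, up to the residual $\mu_3(F)$-action on $\alpha$; I would check this $\mu_3(F)$-ambiguity is absorbed because scaling $\alpha$ by $\zeta \in \mu_3(F) \subset K^*$ is the same as acting by $\kappa = \zeta$, which fixes $\delta$ in $K^{*3}$-classes. This gives the bijection between $\SL_2(F)$-orbits on $V(F)_{4D}$ and $(K^*/K^{*3})_{N=1}$.

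For the stabilizer statement, I would again specialize Theorem~\ref{cubicbij}: the stabilizer of $f \in V(F)_{4D}$ is $S(I)^*[3]_{N=1}$, where $S(I)$ is the multiplier ring of $I$. Since $R = F$ is a field and $S = K$, every fractional ideal is invertible and $S(I) = S = K$ for all $I$, so the stabilizer is $K^*[3]_{N=1} = \{\kappa \in K^* : \kappa^3 = 1,\ N(\kappa)=1\}$. This is exactly the $F$-points of the kernel of $N \colon \Res^K_F\mu_3 \to \mu_3$, i.e.\ $(\Res^K_F\mu_3)_{N=1}$, as claimed. The main obstacle I anticipate is purely bookkeeping: making sure the $\mu_3(F)$-worth of ambiguity in $\alpha$ (coming from the fact that $\alpha$ is only a cube root of $N(\delta)$) is precisely accounted for by the residual $K^*$-equivalence so that neither injectivity nor surjectivity is lost — in particular verifying that no two distinct classes in $(K^*/K^{*3})_{N=1}$ collapse, and that the $\SL_2$ (rather than $\GL_2$) normalization does not introduce a further sign or unit obstruction since $R^{*}=F^{*}$ already contains all the relevant norms. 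Everything else is a direct transcription of Theorem~\ref{cubicbij} to the field case.
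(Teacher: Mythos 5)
Your proposal is correct and takes essentially the same route as the paper: specialize Theorem~\ref{cubicbij} to $R=F$, $S=K$, normalize $I=K$, and identify equivalence classes of triples $(K,\delta,\alpha)$ with classes $\delta\in\left(K^*/K^{*3}\right)_{N=1}$, the choice of cube root $\alpha$ being immaterial because the $\mu_3(F)$-ambiguity is absorbed by acting with $\kappa\in\mu_3(F)\subset K^*$ (for which $N(\kappa)=\kappa^2$), with the stabilizer statement read off from $S(I)=K$. The only slight imprecision is your claim that every nonzero fractional $S$-ideal is principal, which fails literally when $K\simeq F\times F$; but the condition $N(I)=\alpha R$ with $\alpha\in F^*$ excludes the degenerate ideals, so the normalization $I=K$ goes through and the argument matches the paper's (terser) proof.
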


\begin{proof}
The bijection sends the class of $\delta$ in $\left(K^*/K^{*3}\right)_{N =1}$ to the orbit of the binary cubic form corresponding to the $S$-triple $(K, \delta, \alpha)$, where $\alpha$ is any choice of cube root of $N(\delta)$.  Explicitly, this is the orbit of the cubic form $\frac{1}{2D}\Tr_{K/F}(\delta \tau X^3)$ on $K$, where $\tau$ is the image of $t$ in $K$ \cite[Thm.\ 12]{j=0}.   
\end{proof}

\begin{remark}\label{twisted}{\em
Since $D$ can be replaced by $Ds^2$ for any $s \in F^\times$, the $\SL_2(F)$-orbits of discriminant $4 D$ are in bijection with the $\SL_2(F)$-orbits of discriminant $4 Ds^2$.  The bijection is given by  
$(g, f) \mapsto \frac{1}{\det g} g \cdot f,$ 
where 
$g = \left[\begin{smallmatrix}
 1 & 0\\ 0&s
\end{smallmatrix}\right].$  We call the action $(g, f) \mapsto \frac{1}{\det g} g \cdot f$ of $\GL_2(F)$ on $V(F)$ the {\it twisted} $\GL_2$-action.}
\end{remark}

If $R$ is a discrete valuation ring and if $S$ is the maximal order in $K$, we obtain:

\begin{proposition}\label{intorb}
Suppose $R$ is a discrete valuation ring with fraction field $F$.  Assume that the $R$-algebra $S = R[t]/(t^2 - D)$ is the maximal order in $K = F[t]/(t^2 - D)$.  Then the set of $\SL_2(R)$-orbits on $V(R)_{4D}$ is in bijection with the unit subgroup $(S^*/S^{*3})_{N =1} \subset (K^*/K^{*3})_{N = 1}$.  In particular, every rational $\SL_2(F)$-orbit of discriminant $4D$ whose class $\delta$ lies in this unit subgroup contains a unique integral $\SL_2(R)$-orbit.    
\end{proposition}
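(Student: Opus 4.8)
The plan is to read the Proposition off from Theorem~\ref{cubicbij}, exploiting that the maximal order $S$ is a principal ideal ring. Since $R$, and hence $F$, has characteristic not $2$ or $3$, the $F$-algebra $K = F[t]/(t^2-D)$ is separable of rank $2$, and $S$ is its integral closure over the DVR $R$; thus $S$ is a finite $R$-module which is semilocal and either a Dedekind domain or a product of two DVRs. In particular every fractional $S$-ideal is principal, $S$ is integrally closed in $K$, and the natural map $S^*/S^{*3}\to K^*/K^{*3}$ is injective: if $\delta\in S^*$ and $\delta=\gamma^3$ with $\gamma\in K^*$, then $\gamma$ is a root of the monic polynomial $x^3-\delta\in S[x]$, so $\gamma\in S$ by maximality and $\gamma\in S^*$ since $\delta\in S^*$, whence $\delta\in S^{*3}$. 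Via this injection $(S^*/S^{*3})_{N=1}$ is identified with the subgroup of $(K^*/K^{*3})_{N=1}$ consisting of classes of $\delta\in S^*$ with $N(\delta)\in R^{*3}$ (note $N(\delta)\in R^*$ automatically, and $R^*\cap F^{*3}=R^{*3}$).

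Next I would normalize $S$-triples. Given $(I,\delta,\alpha)$ as in Theorem~\ref{cubicbij}, write $I=\lambda S$ (principal) and apply the equivalence relation with $\kappa=\lambda^{-1}$ to reduce to the case $I=S$. Then $N(I)=R=\alpha R$ forces $\alpha\in R^*$, and $I^3\subseteq\delta S$ becomes $\delta^{-1}\in S$, i.e.\ $v_\q(\delta)\le 0$ for every maximal ideal $\q$ of $S$. But $N(\delta)=\alpha^3\in R^*$, so $\sum_{\q\mid\p}f_\q\, v_\q(\delta)=v_\p(N(\delta))=0$, where $\p$ is the maximal ideal of $R$ and $f_\q$ is the residue degree of $\q$ over $\p$; since the $f_\q$ are positive and the $v_\q(\delta)$ nonpositive, all $v_\q(\delta)$ vanish and $\delta\in S^*$. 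Thus a normalized $S$-triple is a pair $(\delta,\alpha)$ with $\delta\in S^*$, $\alpha\in R^*$, $N(\delta)=\alpha^3$. The $\kappa$ preserving the normalization $I=S$ are exactly the $\kappa\in S^*$, acting by $\delta\mapsto\kappa^3\delta$ and $\alpha\mapsto N(\kappa)\alpha$; and for fixed $\delta$ the admissible $\alpha$ form a torsor under $\mu_3(R)$, on which $N$ is surjective already from the copy of $\mu_3(R)$ in $S^*$ (as $N_{K/F}(\zeta)=\zeta^2$ for $\zeta\in R$ and squaring is a bijection of $\mu_3$), so the choice of $\alpha$ is immaterial up to equivalence. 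Hence $(\delta,\alpha)\mapsto[\delta]$ gives a bijection from equivalence classes of $S$-triples onto $(S^*/S^{*3})_{N=1}$, and together with Theorem~\ref{cubicbij} this proves the first assertion.

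For the last sentence I would invoke that the construction of Theorem~\ref{cubicbij} commutes with the base change $R\hookrightarrow F$: over the field $F$ it specializes to Corollary~\ref{bij}, and a normalized integral triple $(S,\delta,\alpha)$, regarded over $F$, becomes $(S\otimes_R F,\delta,\alpha)=(K,\delta,\alpha)$. Therefore the forgetful map from integral $\SL_2(R)$-orbits to rational $\SL_2(F)$-orbits of discriminant $4D$ is identified with the inclusion $(S^*/S^{*3})_{N=1}\hookrightarrow(K^*/K^{*3})_{N=1}$ from the first paragraph. Consequently each rational orbit whose class lies in this subgroup contains exactly one integral orbit: at least one, since the bijection of the first part is onto $(S^*/S^{*3})_{N=1}$, and at most one, by injectivity of the inclusion.

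The only step needing genuine input beyond formal manipulation is the valuation computation forcing $\delta\in S^*$ in the second paragraph; everything else follows once $S$ is known to be a (semilocal) principal ideal ring. A minor bookkeeping point to get right is the role of $\alpha$ in the triple—i.e.\ that its choice does not affect the equivalence class—which is handled by the surjectivity of $N$ on $\mu_3$.
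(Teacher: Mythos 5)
Your proof is correct, and it follows the route the paper intends: the paper states Proposition~\ref{intorb} without proof as a direct consequence of Theorem~\ref{cubicbij}, and your argument (normalizing $S$-triples using that the maximal order $S$ is a semilocal principal ideal ring, the valuation argument forcing $\delta\in S^*$, and the compatibility with base change to $F$ giving uniqueness of the integral orbit) is exactly the filling-in of that derivation.
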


\section{Three-isogenies and orbits of binary cubic forms over a field}

Again assume that $F$ is a field of characteristic not 2 or 3, and fix a 3-isogeny $\phi \colon A \to A'$ of abelian varieties over~$F$.  In this section, we relate the $\SL_2(F)$-orbits of binary cubic forms over~$F$ to the arithmetic of $\phi$.  After outlining the general theory, we make more explicit the case of elliptic curves.  

Write $A[\phi]$ for the group scheme $\ker \phi$ over~$F$, and write $G_F = \Gal(\bar F/F)$ for the Galois group of the separable closure of $F$.  As the group $A[\phi](\bar F)$ has order 3, there is an \'etale quadratic $F$-algebra~$K_0$ such that the action of $G_F$ on $A[\phi](\bar F)$ factors through $\Gal(K_0/F)$.  If the action is non-trivial, this $K_0$ is unique.  If the action is trivial, then we take $K_0$ to be the split algebra $F^2$.  We may write $K_0 = F[t]/(t^2 - D)$ for some $D \in F^*$.  Now set $K = F[t]/(t^2  - \hat D)$, with $\hat D = -3D$, and note that~$\hat D$ is only determined by $\phi$ up to squares in $F^*$.  We call $K$ the `mirror algebra' of $A[\phi]$.
The relevance of this mirror comes from the following proposition.    

\begin{proposition}\label{cassels}
There is an isomorphism of  group schemes
\[A[\phi] \simeq  \ker\left(\Res^K_F  \mu_3 \stackrel{\Nm}{\longrightarrow} \mu_3\right),\]  
and hence an induced isomorphism 
\[H^1(G_F, A[\phi]) \simeq \left(K^*/K^{*3}\right)_{N = 1},\]
where $\left(K^*/K^{*3}\right)_{N= 1}$  denotes the kernel of the norm $N: K^*/K^{*3} \to F^*/F^{*3}$.
\end{proposition}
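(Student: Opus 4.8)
The plan is to deduce the isomorphism of group schemes from a comparison of the two associated Galois modules, and then to deduce the isomorphism on $H^1$ from the long exact cohomology sequence of the short exact sequence defining the kernel.

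\emph{The isomorphism of group schemes.} Since the characteristic of $F$ is prime to $3$, the group scheme $A[\phi]$ is finite étale, hence is determined by the $G_F$-module $A[\phi](\bar F)$; being cyclic of order $3$, this module is determined in turn by the resulting homomorphism $G_F \to \Aut(\Z/3\Z) = \{\pm 1\}$, i.e.\ by the quadratic étale $F$-algebra it cuts out, which by the construction of $K_0$ is $K_0 = F[t]/(t^2-D)$. It therefore suffices to identify the $G_F$-module $M := \ker\big(\Nm\colon (\Res^K_F\mu_3)(\bar F)\to\mu_3(\bar F)\big)$ and check that it cuts out the same algebra. Writing $\chi_a$ for the quadratic character cutting out $F(\sqrt a)$, I would argue: since $K = F[t]/(t^2-\hat D)$, we have $\bar F\otimes_F K \cong \bar F\times\bar F$ (the two factors indexed by the roots of $t^2-\hat D$), so $(\Res^K_F\mu_3)(\bar F)\cong \mu_3(\bar F)^2$ with $G_F$ acting through the mod-$3$ cyclotomic character $\omega = \chi_{-3}$ on each factor and permuting the factors through $\chi_K = \chi_{\hat D}$; the norm is $(x,y)\mapsto xy$. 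A direct check then shows $M = \{(x,x^{-1})\}\cong\Z/3\Z$, with $G_F$ acting through the product character $\omega\cdot\chi_K$. Since $\hat D = -3D$, this product is $\chi_{-3}\cdot\chi_{-3D} = \chi_{9D} = \chi_D$, which cuts out $F(\sqrt D) = K_0$. (If $A[\phi]$ has trivial Galois action, then $D$ is a square and $\chi_D$ is trivial, so this case is covered uniformly.) Hence $A[\phi]$ and $M$ correspond to isomorphic $G_F$-modules, giving $A[\phi]\simeq\ker(\Res^K_F\mu_3\xrightarrow{\Nm}\mu_3)$.

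\emph{The isomorphism on $H^1$.} On $\bar F$-points, $0\to M\to\Res^K_F\mu_3\xrightarrow{\Nm}\mu_3\to 0$ is a short exact sequence of finite $G_F$-modules (surjectivity being that of $(x,y)\mapsto xy$ on $\mu_3(\bar F)^2$). Its long exact cohomology sequence reads
\[\mu_3(K)\xrightarrow{\Nm}\mu_3(F)\to H^1(G_F,M)\to H^1(G_F,\Res^K_F\mu_3)\xrightarrow{\Nm_*}H^1(G_F,\mu_3).\]
By Shapiro's lemma together with Kummer theory and Hilbert's Theorem 90, $H^1(G_F,\Res^K_F\mu_3)\cong H^1(G_K,\mu_3)\cong K^*/K^{*3}$ and $H^1(G_F,\mu_3)\cong F^*/F^{*3}$, and under these identifications $\Nm_*$ becomes the norm map (corestriction on $H^1(-,\mu_3)$ is induced by the field norm). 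Hence $\ker(\Nm_*) = (K^*/K^{*3})_{N=1}$. It remains to check that the connecting map $\mu_3(F)\to H^1(G_F,M)$ vanishes, i.e.\ that $\Nm\colon\mu_3(K)\to\mu_3(F)$ is surjective: this is immediate when $K\cong F\times F$, and when $K/F$ is a field it holds because any $\zeta\in\mu_3(F)\subseteq\mu_3(K)$ has norm $\zeta^2$, a generator of $\mu_3(F)$. Thus $H^1(G_F,M)\cong(K^*/K^{*3})_{N=1}$, and composing with the first isomorphism yields $H^1(G_F,A[\phi])\cong(K^*/K^{*3})_{N=1}$.

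\emph{Main obstacle.} The only delicate point is the explicit Galois-module computation in the first part: correctly combining the cyclotomic twist with the transposition character to arrive at the identity $\omega\chi_K = \chi_D$, and checking that it degenerates correctly in the split cases ($K_0 = F^2$ and/or $K = F^2$). The cohomological part is routine, using only Shapiro's lemma, Kummer theory, the identification of corestriction with the field norm, and the surjectivity of the norm on $\mu_3$.
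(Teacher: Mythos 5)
Your proof is correct and follows essentially the same route as the paper: identifying both sides as order-$3$ Galois modules cut out by a quadratic character, where your identity $\omega\cdot\chi_K=\chi_{-3}\chi_{-3D}=\chi_D$ is exactly the paper's observation that $K_0$ is the third quadratic algebra in the biquadratic extension $K\otimes_F F[\zeta]$. Your second part merely spells out, via Shapiro's lemma, Kummer theory, and the vanishing connecting map, the standard deduction of the $H^1$ statement that the paper leaves implicit.
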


\begin{proof}
Since there is a unique rank 3 group scheme over~$F$ that becomes constant over $K_0$, it is enough to show that the kernel of the norm map $\Res^K_F  \mu_3 \longrightarrow \mu_3$ has this property. Over $\bar F$, this kernel is the subgroup $H = \{(\zeta^i, \zeta^{-i})\}$ of $\mu_3(\bar F) \times \mu_3(\bar F)$, where $\zeta$ is a primitive third root of unity. The two $\mu_3(\bar F)$ factors are indexed by the two embeddings $\tau_1$ and $\tau_2$ of $K$ into $\bar F$, and the action of  $g \in G_F$ is given by $g \cdot (a_j)_{\tau_j} = (g(a_j))_{g \circ \tau_j}$.  A simple computation then shows that $H$ is fixed by the subgroup $G_{K_0} \subset G_F$, since $K_0$ is the third quadratic algebra in the biquadratic extension $K\otimes_F F[\zeta]$ over~$F$.  For a more geometric proof of the proposition, see \cite[Prop.\ 18]{j=0}.    
\end{proof}

We combine Corollary \ref{bij} and Proposition \ref{cassels}:
\begin{theorem}\label{H1bij}
There is a natural bijection between $H^1(G_F, A[\phi])$ and the $\SL_2(F)$-orbits on $V(F)_{4\hat D}$.    Moroever, the stabilizer in $\SL_2(F)$ of any $f \in V(F)_{4\hat D}$ is isomorphic to $A[\phi](F)$. 
\end{theorem}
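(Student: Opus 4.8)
The plan is to deduce the statement by composing the two preceding results. First I would observe that the `mirror algebra' $K = F[t]/(t^2-\hat D)$ attached to $A[\phi]$ is precisely the quadratic $F$-algebra to which Corollary~\ref{bij} applies, with the discriminant ``$D$'' of that statement taken to be $\hat D$; Corollary~\ref{bij} then already furnishes a bijection between $\left(K^*/K^{*3}\right)_{N=1}$ and the set of $\SL_2(F)$-orbits on $V(F)_{4\hat D}$. On the other hand, Proposition~\ref{cassels} supplies an isomorphism $H^1(G_F, A[\phi]) \simeq \left(K^*/K^{*3}\right)_{N=1}$. Composing these two maps yields the asserted bijection between $H^1(G_F, A[\phi])$ and the $\SL_2(F)$-orbits on $V(F)_{4\hat D}$.

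Next I would dispose of two routine well-posedness points. Since $\hat D$ is determined by $\phi$ only up to squares in $F^*$, I would invoke Remark~\ref{twisted}: for any two representatives of the square class the twisted $\GL_2$-action gives a canonical, stabilizer-preserving bijection between the corresponding sets of $\SL_2(F)$-orbits, so the target set (and the composite bijection) is intrinsic to $\phi$. For the adjective ``natural,'' it suffices to note that the bijection of Corollary~\ref{bij} and the isomorphism of Proposition~\ref{cassels} are each given by explicit formulas (e.g.\ $\delta$ maps to the orbit of $\tfrac{1}{2\hat D}\Tr_{K/F}(\delta\tau X^3)$), so the composite commutes with extension of the base field; one may also describe the same bijection torsor-theoretically, using that $H^1(G_F,\SL_2)=1$, that $V(\bar F)_{4\hat D}$ is a single $\SL_2(\bar F)$-orbit with a rational base point, and that Proposition~\ref{cassels} identifies the geometric stabilizer of that base point with $A[\phi]$.

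For the stabilizer assertion I would argue directly. By Corollary~\ref{bij}, the stabilizer in $\SL_2(F)$ of any $f \in V(F)_{4\hat D}$ is isomorphic to the group of $F$-points of the $F$-group scheme $\left(\Res_F^K\mu_3\right)_{N=1} = \ker\left(\Res^K_F\mu_3 \stackrel{\Nm}{\longrightarrow}\mu_3\right)$. Proposition~\ref{cassels} provides an isomorphism of group schemes over $F$ between this kernel and $A[\phi]$; passing to $F$-points gives $A[\phi](F) \simeq \left(\Res_F^K\mu_3\right)_{N=1}(F)$, and combining with the previous identification yields an isomorphism between $A[\phi](F)$ and the $\SL_2(F)$-stabilizer of $f$.

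I do not expect a genuine obstacle here --- the argument is essentially bookkeeping. The points that require a little care are: (i) confirming that the quadratic algebra $K$ appearing in Corollary~\ref{bij} coincides with the mirror algebra of the previous section; (ii) carrying out the stabilizer identification at the level of group schemes over $F$, so that passing to $F$-points is legitimate and produces $A[\phi](F)$ rather than merely a group of the right order; and (iii) absorbing the ambiguity of $\hat D$ up to squares via Remark~\ref{twisted}.
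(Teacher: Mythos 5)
Your proposal is correct and follows exactly the paper's route: the paper proves Theorem \ref{H1bij} precisely by combining Corollary \ref{bij} (applied to the mirror algebra $K = F[t]/(t^2-\hat D)$) with the group-scheme isomorphism of Proposition \ref{cassels}, which handles both the bijection on orbits and the stabilizer identification. Your additional remarks on the square-class ambiguity of $\hat D$ and on passing to $F$-points of $(\Res_F^K\mu_3)_{N=1}$ are sound bookkeeping that the paper leaves implicit.
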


\subsection*{Elliptic curves}

Let us specialize to the case where $A = E$ and $A' = E'$ are elliptic curves.  An elliptic curve with a 3-isogeny admits a model of the form  
\begin{equation}\label{model}
E \colon y^2 = x^3 + D(ax+ b)^2
\end{equation}
with $a$, $b$, and $D$ in $F$.  The points $(0, \pm b\sqrt D)$ generate the kernel $E[\phi](\bar F)$.  In particular, the quadratic $F$-algebra $K_0 = F[t]/(t^2 - D)$ is independent of the chosen model of $E$, and is the same $K_0$ defined above.  The curve $E'$ admits a dual 3-isogeny $\hat\phi \colon E' \to E$, and a model for $E'$ is \cite[1.3]{cohenpazuki}
\begin{equation}\label{Eprime}
E' \colon y^2 = x^3 - 3D\left(ax + 3b - \frac49a^3D\right)^2.
\end{equation}

We phrase everything below in terms of $\hat\phi$ because it is cleaner.  In particular, the set $H^1(G_F, E'[\hat\phi])$ is in bijection with $\SL_2(F)$-equivalences classes of cubic forms of discriminant $4D$, and not $4\hat D$.  On the other hand, the group $H^1(G_F, E'[\hat\phi])$ also parameterizes isomorphism classes of $\hat\phi$-coverings, i.e.\ maps $C \to E$ of curves that become isomorphic to $\hat\phi$ over $\bar F$.  Because it may be useful in other applications, we give a clean description of these $\hat\phi$-coverings, using the bijection in Theorem \ref{H1bij}:      

\begin{proposition}\label{curve}
Let $C \to E$ be a $\hat\phi$-covering corresponding to $f(x,y) \in V(F)_{4D}$ under the bijection of Theorem $\ref{H1bij}$.  Then $C$ is isomorphic to the cubic curve
\[C \colon f(x,y) + ah(x,y)z + bz^3 = 0,\]
in $\P^2$.  Here, $h(x,y)$ is the scaled Hessian $h(x,y) = -\frac{1}{36}(f_{xx} f_{yy} - f_{xy}^2)$ of $f(x,y)$.        
\end{proposition}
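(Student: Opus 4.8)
The plan is to exhibit the covering $C_f\to E$ explicitly using classical covariants of $f$, verify by hand that it is a $\hat\phi$-covering, and then identify its class in $H^1(G_F,E'[\hat\phi])$ with the one that Theorem~\ref{H1bij} attaches to the orbit of $f$. Besides the Hessian $h$ of the statement, recall the Jacobian covariant $g:=\tfrac16(f_xh_y-f_yh_x)$, a binary cubic form, which satisfies the classical syzygy $g^2=h^3+\tfrac14\,\Disc(f)\,f^2$. I would then set $\pi_f\colon C_f\to\P^2$, $(x:y:z)\mapsto\bigl(h(x,y)\,z:g(x,y):z^3\bigr)$.

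\emph{Step 1 (the geometry).} First I would check that $\pi_f$ is a finite degree-$3$ morphism onto $E$. On the chart $z=1$ the equation of $C_f$ gives $ah+b=-f$, hence $D(ah+b)^2=Df^2$; since $\Disc(f)=4D$, the syzygy becomes $g^2=h^3+D(ah+b)^2$, so $(h,g)$ satisfies \eqref{model}, and by irreducibility $\pi_f$ maps $C_f$ onto $E$. Because $\Disc(f)=4D\neq 0$, the form $f$ is separable, so at each of the three points of $C_f$ lying over the line $\{z=0\}$ (the zeros of $f$) the Hessian $h$, and hence by the syzygy also $g$, is nonzero; thus $hz$, $g$, $z^3$ have no common zero on $C_f$, so $\pi_f$ is a morphism and $\pi_f^*\mathcal O_{\P^2}(1)=\mathcal O_{C_f}(3)$ has degree $9$, whence $\deg\pi_f=9/\deg\mathcal O_E(1)=3$. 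One also checks that the discriminant of the ternary cubic $f+ahz+bz^3$ is a nonzero multiple of the discriminant of \eqref{model}, so $C_f$ is smooth of genus one and $\pi_f^{-1}(O_E)=\{z=0\}$; thus $\pi_f$ is a degree-$3$ morphism of smooth genus-one curves.

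\emph{Step 2 (the cohomology class).} Since $h$ and $g$ are $\SL_2$-covariants, the substitution $(x,y)\mapsto\gamma(x,y)$ by $\gamma\in\SL_2(F)$ (fixing $z$) carries $C_f$ to $C_{\gamma\cdot f}$ and intertwines $\pi_f$ with $\pi_{\gamma\cdot f}$, so $f\mapsto[(C_f,\pi_f)]$ descends to a well-defined map from $\SL_2(F)$-orbits on $V(F)_{4D}$ to the set of $\hat\phi$-coverings, i.e.\ to $H^1(G_F,E'[\hat\phi])$. By Corollary~\ref{bij} the source consists of the orbits of the forms $f_\delta:=\tfrac1{2D}\Tr_{K_0/F}(\delta\tau_0X^3)$, $\delta\in(K_0^*/K_0^{*3})_{N=1}$. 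For $\delta=1$ one computes $f_1=3x^2y+Dy^3$ and checks by an explicit coordinate change (equivalently, by matching Aronhold invariants with \eqref{Eprime}) that $(C_{f_1},\pi_{f_1})\cong(E',\hat\phi)$, so the trivial orbit maps to the trivial covering. Then I would match the torsor actions of $H^1(G_F,E'[\hat\phi])$ on both sides (twisting on coverings, and $f_{\delta'}\mapsto f_{\delta\delta'}$ on orbits via Corollary~\ref{bij}), using the presentation $E'[\hat\phi]\simeq\ker(\Res^K_F\mu_3\to\mu_3)$ of Proposition~\ref{cassels} to write down the cocycle of a class $\delta$ and checking that twisting $(C_f,\pi_f)$ by it yields a plane cubic of the asserted shape with coefficients shifted according to $\delta$. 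With the $\delta=1$ anchor, this forces our map to coincide with the bijection of Theorem~\ref{H1bij}, which is the claim; the specialization $a=0$ recovers the descriptions of Selmer and Satg\'e.

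The step I expect to be the main obstacle is this last matching: relating the \emph{abstract} $\hat\phi$-covering cut out by a cohomology class to the \emph{explicit} plane model requires an honest cocycle/twisting computation in the $\mu_3$-presentation of $E'[\hat\phi]$, plus careful tracking of signs and scalars (the normalization of $g$, the exact form of the syzygy, the identification of Proposition~\ref{cassels}) so that $(C_f,\pi_f)$ lands on \eqref{model} itself and not on a quadratic twist of it. The explicit $\delta=1$ computation, which must reproduce the model \eqref{Eprime} of \cite{cohenpazuki}, is the concrete anchor that pins these choices down.
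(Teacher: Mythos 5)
Your Step 1 is sound: the covariant construction $(x:y:z)\mapsto(hz:g:z^3)$, the syzygy in your normalization, and the degree/smoothness checks all go through, and this is essentially the same explicit covering map that the paper records after the proposition. The genuine gap is in Step 2, and you have named it yourself: the identification of the cohomology class of $(C_f,\pi_f)$ with the class that Theorem \ref{H1bij} attaches to the orbit of $f$ is only a plan, not a proof. That identification is the entire mathematical content of the proposition --- Step 1 alone only produces \emph{some} degree-$3$ covering of $E$ by a smooth genus-one curve, and even the statement that it is a $\hat\phi$-covering (rather than a covering attached to a different $3$-isogeny) requires pinning down the kernel over $\bar F$. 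Your proposed anchor at $\delta=1$ is also thinner than it looks: matching Aronhold invariants with \eqref{Eprime} only identifies $C_{f_1}$ with $E'$ as abstract curves, not as coverings of $E$; one must either compute the fiber of $\pi_{f_1}$ over $O_E$ and identify the dual kernel with $E[\phi]$, or observe that the rational point $(1:0:0)\in C_{f_1}$ lies over $O_E$ and argue from there. And the torsor-equivariance step --- that twisting $(C_f,\pi_f)$ by the cocycle of $\delta$ under Proposition \ref{cassels} produces $C_{f'}$ with $f'$ in the orbit of class $\delta\cdot\delta_f$ --- is exactly the ``honest cocycle computation'' you defer; until it is carried out, the argument is a strategy, not a proof.

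For comparison, the paper does not perform this computation from scratch either: it imports it. By \cite[Thm.\ 4.1, Rem.\ (1)]{cohenpazuki}, the $\hat\phi$-covering attached to a class $\delta\in(K^*/K^{*3})_{N=1}$ already has the explicit plane model $\frac{1}{2D}\Tr(\delta\tau w^3)+a\alpha\, zN(w)+bz^3=0$ in $\P(K\oplus F)$; the proposition then reduces to two short verifications for the representative form $f_\delta=\frac{1}{2D}\Tr(\delta\tau(x+\tau y)^3)$, namely $\disc(f_\delta)=4DN(\delta)$ (forcing $\alpha=1$) and the identity that the scaled Hessian of $f_\delta$ is the norm form $N(x+\tau y)=N(w)$. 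So the missing ingredient in your write-up is precisely what that citation supplies; your route would work if you executed the twisting computation in the $\mu_3$-presentation (in effect reproving the Cohen--Pazuki result), but as written the crux is left open.
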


\begin{proof}
Let $\delta \in K^*$ be a representative for the class corresponding to $f$.  From \cite[Thm.\ 4.1, Rem.\ (1)]{cohenpazuki}, we deduce the following equation for $C$ in $\P^2 = \P(K \oplus F)$ with coordinates $(w, z) \in K \oplus F$:
\[C_\delta \colon \frac{1}{2D}\Tr(\delta \tau w^3) + a\alpha zN(w) + bz^3 = 0,\]
where $\alpha$ is a cube root of $N(\delta)$ and $\tau$ is the image of $t$ in $K$.  Note that replacing $\delta$ by $\delta \kappa^3$, for any $\kappa \in K^*$, gives an isomorphic curve.  Thus, we may assume $f(x,y) = \frac{1}{2D}\Tr( \delta\tau (x + \tau y)^3)$, by the proof of Corollary \ref{bij}.  One computes that $\disc(f) = 4DN(\delta)$, so we in fact have $\alpha = 1$.  Another computation shows that the Hessian of $f(x,y)$ is $N(x + \tau y) = N(w)$, so the equation for $C_\delta$ has the desired form.    
\end{proof}

We can also describe the covering map $\pi: C \to E$.  Recall the syzygy \cite[Rem.\ 25]{j=0} 
\begin{equation}\label{syz}
(g/3)^2 =  \disc(f)f^2 + 4h^3
\end{equation}
satisfied by the covariants of $f$; here $g = f_xh_y - f_yh_x$ is the Jacobian derivative of $f$ and $h$.  Writing $f = -bz^3 -ahz$ and dividing (\ref{syz}) by $4z^6$, we obtain
\[\left(\dfrac{g}{6z^3}\right)^2 = \left(\dfrac{h}{z^2}\right)^3 + D\left(a \left(\dfrac{h}{z^2}\right) + b\right)^2.\]
This gives a map $\pi : C \to E$ sending $(z,y/x)$ to $(h/z^2, g/6z^3)$.

\section{$\phi$-soluble orbits of binary cubic forms}\label{soluble orbits}
As before, let $A \to A'$ be a 3-isogeny of abelian varieties over a field $F$.  There is then a bijection between the elements of $H^1(G_F, A[\phi])$ and the $\SL_2(F)$-orbits on the set $V(F)_{4\hat D}$ of binary forms of discriminant $4\hat D$, where $\hat D \in F^*$ is defined as in the previous section.  

As in the elliptic curve case, the group $H^1(G_F, A[\phi])$ classifies $\phi$-coverings $T \to A$, where $T$ is a torsor for $A'$. The orbits in the image of the Kummer map
\[\partial \colon A'(F) \to H^1(G_F, A[\phi])\]
correspond to {\it soluble} $\phi$-coverings $T \to A'$, i.e., $\phi$-coverings with $T(F) \ne \emptyset$.  We let $V(F)_{4D}^\sol$ denote the set of binary cubic forms $f(x,y)\in V(F)_{4D}$ corresponding, under the bijection of Theorem \ref{H1bij}, to a class in the image of the Kummer map.  If $ f\in V(F)^\sol_{4D}$, we say that $f$ is {\it $\phi$-soluble}.  

For each $s \in F^*$, we have the quadratic twists $A_s$ and $A'_s$ and a 3-isogeny $\phi_s \colon A_s \to A_s'$.  If $A = E$ is an elliptic curve with model $E \colon y^2 = x^3 + D(ax + b)^2$ then a model for the quadratic twist $E_s$ is 
\[E_s : y^2 = x^3 + Ds(ax+bs)^2,\]
and $\phi_s$ is the map given by taking the quotient by the points $(0,\pm bs\sqrt{Ds})$.  
For higher dimensional abelian varieties, one defines $A_s$, $A'_s$, and $\phi_s$ via descent, using the cocycle $\chi_s \colon G_F \to \{\pm1\}$ corresponding to the extension $F(\sqrt{s})/F$; see \cite[III.1.3]{Serre} for example.  Note that it makes sense to twist the 3-isogeny $\phi$ by $\chi_s$ since $A[\phi]$ is preserved by the inversion automorphism on $A$.  We often omit the subscript from $\phi_s$ when the context makes it clear---for example, when writing $A_s[\phi]$.  

The key fact we will use about $\phi_s$ is that 
\[A_s[\phi_s](\bar F) \simeq A[\phi](\bar F) \otimes \chi_s\] 
as $\F_3[G_F]$-modules.  In particular, if $F[t]/(t^2 - \hat D)$ is the mirror algebra of $A[\phi]$, then $F[t]/(t^2 - \hat Ds)$ is the mirror algebra for $A_s[\phi_s]$.  This can be read directly off the models in the elliptic curve case.

By Theorem \ref{H1bij} applied to $\phi_s$, the set of $\SL_2(F)$-orbits on the set $V(F)_{4\hat Ds}$ of binary cubic forms over~$F$ of discriminant $4\hat Ds$ is in bijection with $H^1(G_F, A_s[\phi])$.  This allows us to define a notion of $\phi$-solubility for binary cubic forms over~$F$ of any discriminant, compatible with the already defined notion of $\phi$-solubility for forms of discriminant $4\hat D$:

\begin{definition}{\em
A binary cubic form $f \in V(F)$ is {\it $\phi$-soluble} if it corresponds under the bijection of Theorem \ref{H1bij} to an element in the image of $\partial\colon A'_s(F) \to  H^1(G_F, A_s[\phi])$, where $s = \disc(f)/(4\hat D)$.  The set of all $\phi$-soluble $f \in V(F)$ is denoted by $V(F)^{\sol}$. 
}\end{definition}

\begin{remark}{\em
For any $s \in F^*$, we have $V(F)^{\sol} = V(F)^{\sols}$.  In other words, this notion of $\phi$-solubility depends only on the quadratic twist family of $A$ and the associated family of 3-isogenies, $\{\phi_s:A_s\to A'_s\}$, and not on the isogeny $\phi:A\to A'$ itself.} 
\end{remark}

Our main theorem parametrizing elements of $A'_s(F)/\phi(A_s(F))$, $s\in F^*$, by $\phi$-soluble binary cubic forms over~$F$ is as follows. 

\begin{theorem}\label{soluble}
There is a natural bijection between  the $\SL_2(F)$-orbits on
  $V(F)^\sol$ having discriminant~$4\hat Ds$ and the elements of the group $A'_s(F)/\phi(A_s(F))$.  Under this bijection, the identity element of $A'_s(F)/\phi(A_s(F))$ corresponds to the unique $\SL_2(F)$-orbit of reducible binary cubic forms over~$F$ of
  discriminant $4\hat Ds$, namely the orbit of $f(x,y) = 3x^2y + \hat Dsy^3.$    
\end{theorem}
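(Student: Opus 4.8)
The plan is to deduce the first bijection almost formally from Theorem~\ref{H1bij} together with the Kummer sequence, and then to pin down the distinguished orbit by a short explicit computation.

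Concretely, I would first recall the Kummer sequence for the $3$-isogeny $\phi_s\colon A_s\to A_s'$ (which is separable, the characteristic not being $3$), giving an exact sequence
\[
0\longrightarrow A_s'(F)/\phi(A_s(F))\longrightarrow H^1(G_F,A_s[\phi])\longrightarrow H^1(G_F,A_s),
\]
so that the connecting map $\partial\colon A_s'(F)\to H^1(G_F,A_s[\phi])$ identifies $A_s'(F)/\phi(A_s(F))$ with the subgroup $\mathrm{im}(\partial)$. By the very definition of $\phi$-solubility, a form $f\in V(F)$ of discriminant $4\hat Ds$ lies in $V(F)^{\sol}$ exactly when the class attached to its $\SL_2(F)$-orbit by Theorem~\ref{H1bij} (applied to $\phi_s$) lies in $\mathrm{im}(\partial)$. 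Restricting the bijection of Theorem~\ref{H1bij} to discriminant $4\hat Ds$ therefore yields a bijection between the $\SL_2(F)$-orbits on $V(F)^{\sol}$ of that discriminant and $\mathrm{im}(\partial)\cong A_s'(F)/\phi(A_s(F))$; naturality is inherited from that of Theorem~\ref{H1bij} and of the Kummer map.

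It then remains to identify the orbit matched with the identity of $A_s'(F)/\phi(A_s(F))$. Since $\partial$ sends the identity to the trivial class $0\in H^1(G_F,A_s[\phi])$, and this class corresponds under Proposition~\ref{cassels} to the trivial element of $(K^*/K^{*3})_{N=1}$ with $K=F[t]/(t^2-\hat Ds)$, I would feed $\delta=1$ into the explicit formula of Corollary~\ref{bij}: the orbit in question is that of $\frac{1}{2\hat Ds}\Tr_{K/F}(\tau(x+\tau y)^3)$, with $\tau$ the image of $t$ in $K$. Expanding the cube and using $\tau^2=\hat Ds$, $\Tr_{K/F}(1)=2$, and $\Tr_{K/F}(\tau)=0$ collapses this to $f_0(x,y):=3x^2y+\hat Dsy^3$.

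The one step with real content is showing that the orbit of $f_0$ is precisely the set of reducible binary cubic forms over~$F$ of discriminant $4\hat Ds$. One direction is immediate: $f_0=y(3x^2+\hat Dsy^2)$ is reducible and has discriminant $4\hat Ds$. For the converse I would show any reducible $f\in V(F)$ of discriminant $4\hat Ds$ can be moved to $f_0$ by $\SL_2(F)$: such an $f$ has a root in $\P^1(F)$, which transitivity of the $\SL_2(F)$-action lets me place at $[1\!:\!0]$, making the $x^3$-coefficient vanish; the leading coefficient of the residual binary quadratic factor is then nonzero because $\disc(f)\neq 0$, so the unipotent subgroup fixing $[1\!:\!0]$ lets me complete the square and kill the $xy^2$-term (using that the characteristic is not $2$), and finally a diagonal element of $\SL_2(F)$ rescales $x$ and $y$ to normalize the leading coefficient to $3$, with the $y^3$-coefficient forced to be $\hat Ds$ since the discriminant is unchanged. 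I expect this $\SL_2(F)$-reduction, together with bookkeeping the discriminant through it, to be the main (though entirely elementary) obstacle; note in particular that no separate check that $f_0$ is $\phi$-soluble is needed, since $f_0$ corresponds to $0=\partial(O_{A_s'})$.
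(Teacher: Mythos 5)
Your proof is correct. The first half is exactly the paper's argument: the Kummer map identifies $A'_s(F)/\phi(A_s(F))$ with its image in $H^1(G_F,A_s[\phi])$, and by the definition of $\phi$-solubility the bijection of Theorem \ref{H1bij} (applied to $\phi_s$, whose mirror algebra is $F[t]/(t^2-\hat Ds)$) restricts to the stated bijection. Where you diverge is in identifying the reducible orbit: you specialize the explicit formula of Corollary \ref{bij} at $\delta=1$ to obtain $3x^2y+\hat Dsy^3$, and then prove by a direct $\SL_2(F)$-reduction (move a rational root to $[1\!:\!0]$ so the $x^3$-coefficient vanishes, note $b\neq 0$ since otherwise $\Disc=0$, complete the square using $\mathrm{char}\neq 2$, and rescale diagonally, the $y^3$-coefficient being pinned down by $\Disc=4b^3d=4\hat Ds$) that the reducible forms of discriminant $4\hat Ds$ constitute a single orbit. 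The paper instead argues on the $\delta$-side: the form $\frac{1}{2D}\Tr_{K/F}(\delta\tau X^3)$ is reducible over $F$ if and only if $\delta\in F^*$ modulo cubes (since $\tau$ is the unique trace-zero element of $K$ up to $F$-scaling), and then $N(\delta)=\delta^2\in F^{*3}$ forces $\delta\in F^{*3}$, i.e., $\delta$ trivial. The paper's route is shorter and shows directly that no nontrivial class can produce a reducible form; your route is more elementary (pure reduction theory plus discriminant bookkeeping) and has the small advantage of actually exhibiting the representative $3x^2y+\hat Dsy^3$ named in the statement, which the paper leaves implicit. Both arguments are complete, and your observation that no separate solubility check for this form is needed (it corresponds to $\partial$ of the identity) is exactly right.
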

 \begin{proof}
The image of the Kummer map for $A_s$ is isomorphic to $A'_s(F)/\phi(A_s(F))$, so the first statement follows. For the second part of the corollary, we use the explicit description of the bijection given in (the proof of) Corollary \ref{bij}.  If $\delta \in \left(K^*/K^{*3}\right)_{N= 1}$, the corresponding cubic form is $\frac{1}{2D} \Tr_{K/F}(\delta \tau X^3)$.  This is reducible over~$F$ if and only if $\delta \in F^\times$ (modulo cubes in $K$), since $\tau$ is the unique element in $K$ of trace-zero up to $F$-scaling.  If this is the case, then $N(\delta) = \delta^2 \in F^{*3}$, so we must have $\delta \in F^{*3}$, i.e.\ $\delta$ is the class of the identity in $\left(K^*/K^{*3}\right)_{N= 1}$.    
\end{proof}

\section{$\phi$-soluble orbits over local fields}

Now assume that $p$ is a prime and $F$ is a finite extension of $\Q_p$ with ring of integers $\O_F$.  Fix a 3-isogeny $\phi \colon A \to A'$ of abelian varieties over~$F$.  The 3-isogeny $\phi$ determines a notion of $\phi$-solubility, as defined in the previous section, on the space $V(F)$ of binary cubic forms. 

In this section we show that, in certain circumstances, $\phi$-solubility of $f \in V(F)$ implies that $f$ is $\SL_2(F)$-equivalent to an integral form $\tilde f \in V(\O_F)$.  We find that the conditions under which we can guarantee integrality 
are related to the value of the Selmer ratio
\[c(\phi) :=  \dfrac{|\coker\,  \phi \colon A(F) \to A'(F)|}{|\ker \phi \colon A(F) \to A'(F)|}\]
defined in the introduction.

As before, let $K = F[t]/(t^2 - \hat D)$ be the mirror quadratic $F$-algebra attached to $A[\phi]$.  Assume from now on that $\hat D$ lies in $\O_F$.  We first consider the case of good reduction. 

\begin{proposition}\label{solsgood}
If $p \neq 3$ and $A/F$ has good reduction, then the Kummer map induces an isomorphism
\[A'(F)/\phi(A(F)) \simeq (\O_K^*/\O_K^{*3})_{N = 1},\]
where $\O_K$ is the ring of integers of $K$ and 
\[(\O_K^*/\O_K^{*3})_{N = 1} = \ker \left(N \colon \O_K^*/\O_K^{*3} \to \O_F^*/\O_F^{*3}\right).\]  
Moreover, $c(\phi) = 1$ in this case.
\end{proposition}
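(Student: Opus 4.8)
The plan is to work integrally over $\O_F$ rather than over $F$. Since $A$ has good reduction it extends to an abelian scheme $\mathcal A/\O_F$; as $A'$ is isogenous to $A$ it also has good reduction, and $\phi$ spreads out to an isogeny $\mathcal A\to\mathcal A'$ of abelian schemes over $\O_F$ whose kernel $\mathcal A[\phi]$ is a finite flat $\O_F$-group scheme of order $3$. Because $p\neq3$ the integer $3$ is a unit in $\O_F$, so $\mathcal A[\phi]$ is finite \'etale; and by the N\'eron--Ogg--Shafarevich criterion the $G_F$-action on $A[\phi](\bar F)$ is unramified, so the quadratic algebra $K_0$ --- and hence the mirror algebra $K=F[t]/(t^2-\hat D)$, since $\hat D=-3D$ differs from $D$ by the unit $-3$ --- is unramified over $F$, with $\O_K$ finite \'etale over $\O_F$.

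First I would compute $A'(F)/\phi(A(F))$ using the \'etale cohomology of $\Spec\O_F$. Properness and smoothness give $\mathcal A(\O_F)=A(F)$ and $\mathcal A'(\O_F)=A'(F)$, so the exact sequence of \'etale sheaves $0\to\mathcal A[\phi]\to\mathcal A\to\mathcal A'\to0$ produces an injection $A'(F)/\phi(A(F))\hookrightarrow H^1_{\et}(\O_F,\mathcal A[\phi])$ whose cokernel embeds into $H^1_{\et}(\O_F,\mathcal A)$. The latter group vanishes: for an abelian scheme over the henselian local ring $\O_F$ with residue field $k$, one reduces to the special fibre (using the vanishing of formal-group cohomology) and invokes Lang's theorem, $H^1(k,\mathcal A_k)=0$. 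Hence $A'(F)/\phi(A(F))\cong H^1_{\et}(\O_F,\mathcal A[\phi])$. Since $\mathcal A[\phi]$ is finite \'etale, this group is $H^1(\Gamma,A[\phi](F^{\mathrm{ur}}))$ with $\Gamma=\Gal(F^{\mathrm{ur}}/F)\cong\widehat{\Z}$, and for any finite $\widehat{\Z}$-module $N$ one has $|H^1(\widehat{\Z},N)|=|N^{\widehat{\Z}}|$; applying this with $N=A[\phi](F^{\mathrm{ur}})$ already yields $c(\phi)=|A'(F)/\phi(A(F))|/|A[\phi](F)|=1$.

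Next I would identify this group with $(\O_K^*/\O_K^{*3})_{N=1}$. The isomorphism $A[\phi]\simeq\ker(\Res^K_F\mu_3\xrightarrow{\Nm}\mu_3)$ of Proposition~\ref{cassels} extends to an isomorphism of finite \'etale $\O_F$-group schemes $\mathcal A[\phi]\simeq\ker(\Res^{\O_K}_{\O_F}\mu_3\xrightarrow{\Nm}\mu_3)$: both are finite \'etale over the connected normal scheme $\Spec\O_F$ with isomorphic (unramified) generic fibres, hence isomorphic. As $\O_K/\O_F$ is finite \'etale, $\Res^{\O_K}_{\O_F}\mu_3$ is finite \'etale and the norm to $\mu_3$ is surjective both as a map of \'etale sheaves and on $\O_F$-points (surjectivity of the unit norm for unramified extensions). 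Taking \'etale cohomology of $0\to\mathcal A[\phi]\to\Res^{\O_K}_{\O_F}\mu_3\xrightarrow{\Nm}\mu_3\to0$ over $\O_F$, using Shapiro's lemma and the Kummer sequences on $\Spec\O_K$ and $\Spec\O_F$ (with $\Pic\O_K=\Pic\O_F=0$), gives
\[H^1_{\et}(\O_F,\mathcal A[\phi])=\ker\big(\O_K^*/\O_K^{*3}\xrightarrow{N}\O_F^*/\O_F^{*3}\big)=(\O_K^*/\O_K^{*3})_{N=1}.\]
Composing with the isomorphism of the previous paragraph gives the claim; one checks that the resulting map is induced by the Kummer map, since the connecting map for the sheaf sequence restricts over $\Spec F$ to the usual Kummer map into $H^1(F,A[\phi])$.

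The individual steps are routine Kummer theory over rings of class number one; the inputs needing genuine care are the vanishing $H^1_{\et}(\O_F,\mathcal A)=0$ (Lang's theorem on the special fibre, together with the formal-group vanishing used to descend from $\O_F$ to $k$) and the fact that $K/F$ is unramified, which is what makes the integral Weil-restriction model $\ker(\Res^{\O_K}_{\O_F}\mu_3\to\mu_3)$ meaningful and forces it to agree with $\mathcal A[\phi]$. I expect the main thing to get right to be precisely this bookkeeping: keeping the norm exact sequence and the identifications honest over $\O_F$ (not just over $F$ or over $F^{\mathrm{ur}}$), so that the final answer lands in the \emph{unit} group $(\O_K^*/\O_K^{*3})_{N=1}$ rather than in a larger subgroup of $(K^*/K^{*3})_{N=1}$.
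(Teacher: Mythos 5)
Your proposal is correct, but it proves the statement by a genuinely different route than the paper does. The paper's proof is short and citation-based: it quotes the fact that under these hypotheses the image of the Kummer map is exactly the unramified subgroup $H^1_{\rm un}(G_F,A[\phi])$ (Cassels for elliptic curves, \v{C}esnavi\v{c}ius in general), identifies that subgroup with $(\O_K^*/\O_K^{*3})_{N=1}$ via local class field theory and Proposition \ref{cassels}, and then gets $c(\phi)=1$ by counting, using $\O_K\simeq \O_F[t]/(t^2-\hat D)$ and $|(\O_K^*/\O_K^{*3})_{N=1}|=|\O_K^*[3]_{N=1}|=|A[\phi](F)|$. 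You instead work with the abelian scheme over $\O_F$ and reprove the quoted input directly: the vanishing $H^1_{\et}(\O_F,\mathcal{A})=0$ (Lang's theorem plus the henselian/smoothness reduction) gives $A'(F)/\phi(A(F))\simeq H^1_{\et}(\O_F,\mathcal{A}[\phi])$, which is essentially how the cited general result is proved; the $\widehat{\Z}$-cohomology equality $|H^1|=|H^0|$ then yields $c(\phi)=1$ immediately, without the unit-group counting or the identification of $\O_K$ with $\O_F[t]/(t^2-\hat D)$ (a point that needs extra care at $p=2$ in the paper's phrasing); and the integral norm sequence for $\ker(\Res^{\O_K}_{\O_F}\mu_3\to\mu_3)$ together with Shapiro and Kummer theory over $\O_K$ replaces the appeal to local class field theory, landing cleanly in the unit classes. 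Your approach is longer but self-contained and makes the role of the N\'eron model explicit; the paper's is quicker at the cost of external references. One small repair: your justification that the mirror algebra $K$ is unramified (``$\hat D=-3D$ differs from $D$ by the unit $-3$'') is not sufficient when $p=2$, since a unit class need not define an unramified quadratic extension of a $2$-adic field; the statement is nonetheless true because $F(\sqrt{-3})=F(\zeta_3)$ is unramified and the classes in $F^*/F^{*2}$ defining split-or-unramified extensions form a subgroup, so $K\subset K_0(\zeta_3)$ is unramified whenever $K_0$ is.
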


\begin{proof}
Under the hypotheses, the image of the Kummer map 
\[A'(F) \to H^1(G_F,A[\phi]) \]
is exactly the subgroup $H^1_\mathrm{un}(G_F, A[\phi])$ of unramified classes, i.e., those that become trivial when restricted to the inertia subgroup of $G_F$.  In the case of elliptic curves, this is \cite[Lem.\ 4.1]{Cassels8}.  For the general case, see \cite[Prop.\ 2.7(d)]{Cesnavicius2}.  On the other hand, by local class field theory, the unramified classes map onto $\left(\O_K^*/\O_K^{*3}\right)_{N = 1}$ under the isomorphism of Proposition \ref{cassels}, which proves the first claim.

For the second claim, first note that $\hat D$ may be taken to be a unit in $\O_F$ in the good reduction case.  This follows from the criterion of Ogg-N\'eron-Shaferevich.  For an elementary proof in the elliptic curve case, note that $D$ divides the discriminant of the elliptic curve (\ref{model}).  It follows that $\O_K \simeq \O_F[t]/(t^2 - \hat D)$.  We then compute \[|A'(F)/\phi(A(F))| = |(\O_K^*/\O_K^{*3})_{N = 1}|  = |\O_K^*[3]_{N = 1}| = |A[\phi](F)|,\]
where we have used Theorem \ref{cubicbij} and Theorem \ref{H1bij}.  This shows that $c(\phi) = 1$.  
\end{proof}

\begin{theorem}\label{integrality}
Suppose that $p >3$, that $A$ has a quadratic twist of good reduction, and that $\hat D$ is a not in the square of the maximal ideal in $\O_F$.  Then any $\phi$-soluble $f \in V(F)_{4\hat D}$ is $\SL_2(F)$-equivalent to an integral form $\tilde f \in V(\O_F)_{4\hat D}$.  Moreover, we have $c(\phi) = 1$.        
\end{theorem}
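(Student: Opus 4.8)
The plan is to reduce the $p > 3$ case to the good reduction case already handled in Proposition \ref{solsgood}, using the hypothesis on $\hat D$ to control the ramification. First I would observe that the hypothesis ``$\hat D$ not in the square of the maximal ideal'' means $v_\p(\hat D) \in \{0, 1\}$, so $K = F[t]/(t^2 - \hat D)$ is either unramified (when $v_\p(\hat D) = 0$) or $K_0 = F[t]/(t^2 - D)$ with $D = -\hat D/3$ a unit. Since $p > 3$, the element $3$ is a unit, so in either case $S = \O_F[t]/(t^2-\hat D)$ is the maximal order $\O_K$: in the unramified case $t^2 - \hat D$ stays separable mod $\p$, and in the ramified case one checks the order is maximal because the conductor is a unit. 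This puts us in a position to apply Proposition \ref{intorb} with $R = \O_F$.

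Next, for the claim $c(\phi) = 1$: by hypothesis $A$ has a quadratic twist $A_s$ of good reduction. The Selmer ratio $c(\phi_s)$ equals $1$ by Proposition \ref{solsgood} applied to $A_s$. I would then show $c(\phi) = c(\phi_s)$ for $p > 3$. The point is that $c(\phi_t)$ as a function of $t \in F^*/F^{*2}$ is locally constant and, for $p \nmid 3 \cdot 2$ (so $p \geq 5$), twisting by a unit does not change whether $A$ has good reduction, and the ratio $c(\phi_t)/c(\phi)$ can be computed from the local Tamagawa-type factors, which are trivial away from $3$ and the residue characteristic obstructions. Concretely, since $A_s$ has good reduction and $s$ is a unit times a square (because any non-square class over a local field of residue characteristic $\neq 2$ is represented by a unit or a uniformizer, and good reduction of $A_s$ forces the relevant twist to be unramified), one gets that $\hat D$ itself is a unit times a square, and then Proposition \ref{solsgood} applies directly to $A$, giving $c(\phi) = 1$. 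I should be a little careful here about the $v_\p(\hat D) = 1$ branch; in that branch $A$ need not have good reduction, but its quadratic twist by a uniformizer does, and I would run the same Kummer-image argument on that twist, transporting the conclusion back via the isomorphism $A_s[\phi_s] \simeq A[\phi] \otimes \chi_s$ together with the fact that for $p > 3$ the local Selmer ratio is insensitive to unramified-at-$p$ phenomena.

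For the integrality statement itself: let $f \in V(F)_{4\hat D}$ be $\phi$-soluble, corresponding to a class $\delta \in (K^*/K^{*3})_{N=1}$ and, via the Kummer map, to a point in $A'(F)/\phi(A(F))$. By Proposition \ref{solsgood} (applied to $A$ or to its good-reduction twist, as above), the image of the Kummer map consists exactly of the unramified classes, which under Proposition \ref{cassels} correspond precisely to the unit subgroup $(\O_K^*/\O_K^{*3})_{N=1} \subset (K^*/K^{*3})_{N=1}$. Since $S = \O_K$ is the maximal order, Proposition \ref{intorb} says every $\SL_2(F)$-orbit whose class $\delta$ lies in this unit subgroup contains an integral $\SL_2(\O_F)$-orbit; that is, $f$ is $\SL_2(F)$-equivalent to some $\tilde f \in V(\O_F)$, and since the discriminant is $\SL_2$-invariant, $\tilde f \in V(\O_F)_{4\hat D}$ as desired.

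I expect the main obstacle to be the $v_\p(\hat D) = 1$ (ramified mirror) branch of the $c(\phi) = 1$ argument: there $A$ itself can have bad (additive) reduction, so Proposition \ref{solsgood} does not apply on the nose, and one must instead argue on the good-reduction twist and check that passing between $A$ and that twist does not alter the Selmer ratio — this uses that $p > 3$ so that the only possible discrepancy (coming from component groups / local Tamagawa factors at $\p$) is a power of $p$ that cannot contribute to a ratio lying in $3^{\Z}$, forcing it to be trivial. The integrality conclusion and the identification of the Kummer image with the unit subgroup are then essentially formal given Propositions \ref{solsgood} and \ref{intorb}.
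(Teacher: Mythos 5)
Your first branch --- where $v_\p(\hat D)=0$, so that $K$ is unramified, $\hat D$ is a unit, $S=\O_F[t]/(t^2-\hat D)$ is maximal, and Propositions \ref{solsgood} and \ref{intorb} give both the integral representative and $c(\phi)=1$ --- is correct and is exactly the paper's argument for the case that $A$ has good reduction.

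The genuine gap is in the other branch, $v_\p(\hat D)=1$, where $A$ itself has bad reduction and is a \emph{ramified} twist of a good-reduction variety. Two steps fail as written. First, for integrality you invoke ``Kummer image $=$ unramified classes,'' but that identification (Proposition \ref{solsgood}) requires good reduction; applying it to the good-reduction twist $A_s$ and ``transporting back via $A_s[\phi_s]\simeq A[\phi]\otimes\chi_s$'' does not do what you need, because $A_s[\phi_s]$ has a different (unramified) mirror algebra and its soluble classes correspond to forms of discriminant $4\hat Ds$, not $4\hat D$. The missing observation, which is the paper's key point here, is that when the mirror $K$ is ramified and $p\neq 3$ the whole group $(K^*/K^{*3})_{N=1}$ is trivial; hence there is a unique $\SL_2(F)$-orbit of discriminant $4\hat D$, which visibly contains the integral reducible form $3x^2y+\hat Dy^3$ (or, using your correct remark that $S$ is still maximal, the trivial class lies in the unit subgroup and Proposition \ref{intorb} applies). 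Second, your argument for $c(\phi)=1$ in this branch rests on the assertion that the discrepancy between $c(\phi)$ and $c(\phi_s)$ is ``a power of $p$'' coming from component groups; that is false as stated: N\'eron component groups in the additive case can have order divisible by $2$ or $3$ (e.g.\ Kodaira types $\IV$, $\IVS$ for elliptic curves), so you would need to prove that ramified quadratic twists of good-reduction abelian varieties have component groups of order prime to $3$ --- a nontrivial input you neither state nor establish, and Corollary \ref{cor:tamag} lives in a later section anyway. The paper's argument needs none of this: the numerator of $c(\phi)$ is at most $\left|(K^*/K^{*3})_{N=1}\right|=1$ since the Kummer map is injective into $H^1(G_F,A[\phi])$, and the denominator is $|A[\phi](F)|=1$ because $K_0=F[t]/(t^2+3\hat D)$ is ramified (as $p\neq 3$), whence $c(\phi)=1$ directly. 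So your proposal needs these two repairs in the ramified branch before it constitutes a proof.
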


\begin{proof}
First, suppose that $A$ has good reduction.  By the Ogg-N\'eron-Shaferevich criterion, the kernel $A[\phi]$ is unramified, so that $K$ is unramified over~$F$ (since $p \neq 3$).  Since $p \neq 2$, it follows that $\hat D$ is a unit in $\O_F$, and  $\O_F[t]/(t^2 - \hat D)$ is the maximal order in $K$.  By Proposition \ref{intorb}, the orbit of $f$ corresponds under the bijection of Theorem \ref{soluble} to a class in $(\O_K^*/\O_K^{*3})_{N = 1}$.  Thus by Proposition \ref{solsgood}, the orbit of $f$ contains an integral orbit, and $c(\phi) = 1$.

Next, assume that $A$ has bad reduction.  By assumption, it must be a {\it ramified} twist of an abelian variety $B$ with good reduction, i.e., $A = B_s$ for some $s$ in the maximal ideal of~$\O_F$.  It follows that the mirror algebra $K$ is ramified over~$F$.  But then the group $\left(K^*/K^{*3}\right)_{N = 1}$ is trivial, and $f$ is $\SL_2(F)$-equivalent to the integral form $3x^2y + \hat Dsy^3$.  

Finally, we need to show that $c(\phi) = 1$ in this bad reduction case.  The numerator in the definition of $c(\phi)$ equals 1 since 
\[|A'(F)/\phi(A(F))| \leq \left|\left(K^*/K^{*3}\right)_{N = 1}\right| = 1.\] 
On the other hand, the denominator equals $|A[\phi](F)|$, which also equals 1.   Indeed, the field $K_0 = F[t]/(t^2 +3\hat D)$, over which the Galois action on $A[\phi]$ trivializes, is ramified over~$F$ (since the mirror $F$-algebra $K$ is ramified, and since $p \neq 3$).  Thus $A[\phi](F)$ is trivial.   
\end{proof}

\section{$\phi$-Selmer groups and locally $\phi$-soluble orbits over a global field}\label{integerorbits}

Now let $F$ be a number field.  Write $F_v$ for the completion of $F$ at a place $v$.  If $\varphi : A \to A'$ is an isogeny of abelian varieties over~$F$, the {\it $\varphi$-Selmer group} $\Sel_\varphi(A)$ is the subgroup of  $H^1(G_F, A[\varphi])$ of classes that are locally in the image of the Kummer map 
\[\partial_v : A'(F_v) \la H^1(G_{F_v}, A[\varphi])\]
for every place $v$ of $F$.  Equivalently, these are the classes locally in the kernel of the map   
\[H^1(G_{F_v}, A[\varphi]) \to H^1(G_{F_v}, A)\]
for every place $v$, i.e.\ the classes corresponding to principal homogeneous spaces with an $F_v$-point for every place $v$. 

Now assume $\phi \colon A \to A'$ is a 3-isogeny over~$F$.  If $v$ is a place of $F$, write $A_v$ for the base change $A \otimes_F F_v$.
We have defined the subset $V(F)^{\sol} \subset V(F)$ of $\phi$-soluble cubic forms over~$F$.  For each $v$, we have also defined the subset $V(F_v)^{\sol} \subset V(F_v)$ of $\phi$-soluble cubic forms over $F_v$.   We let $V(F)^\ls \subset V(F)^{\sol}$ denote the set of {\it locally $\phi$-soluble} binary cubic forms, i.e.\ the set of $f \in V(F)$ such that $f \in V(F_v)^{\sol}$ for all places $v$ of $F$.  

Finally, fix $\hat D \in \O_F$ so that $K = F[t]/(t^2 - \hat D)$ is the mirror algebra associated to $A[\phi]$.  The following theorem now follows immediately from Theorem \ref{soluble}.  

\begin{theorem}\label{Qcor}
Let $\phi \colon A \to A'$ be a $3$-isogeny of abelian varieties over a number field $F$, and fix $s \in F^*$.  Then there is a natural bijection between the $\SL_2(F)$-orbits of
  $V(F)^\ls$ of discriminant $4\hat Ds$ and the elements of the
  $\phi$-Selmer group $\Sel_{\phi}(A_s)$ corresponding to the isogeny $\phi_s:A_s\to A'_s.$  Under this bijection, the identity element of $\Sel_{\phi}(A_s)$ corresponds to the unique orbit of reducible binary cubic forms of discriminant $4\hat D s$.   
\end{theorem}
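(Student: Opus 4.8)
The plan is to promote the bijection of Theorem~\ref{H1bij} for the twisted isogeny $\phi_s\colon A_s\to A'_s$ to an identification of $\Sel_\phi(A_s)$ with the $\SL_2(F)$-orbits on $V(F)^{\ls}$ of discriminant $4\hat Ds$. Since the mirror algebra of $A_s[\phi]$ is $F[t]/(t^2-\hat Ds)$, Theorem~\ref{H1bij} already gives a bijection between $H^1(G_F,A_s[\phi])$ and the $\SL_2(F)$-orbits on $V(F)_{4\hat Ds}$; and by definition $\Sel_\phi(A_s)$ is the set of classes $c\in H^1(G_F,A_s[\phi])$ whose restriction to $G_{F_v}$ lies in the image of the Kummer map $\partial_v\colon A'_s(F_v)\to H^1(G_{F_v},A_s[\phi])$ for every place $v$. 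So it suffices to show that, if $c$ corresponds to the orbit of $f\in V(F)_{4\hat Ds}$, then the image of $c$ in $H^1(G_{F_v},A_s[\phi])$ lies in $\mathrm{im}\,\partial_v$ if and only if the image of $f$ in $V(F_v)$ is $\phi$-soluble over $F_v$, i.e.\ lies in $V(F_v)^{\sol}$. Granting this for every $v$, the orbit of $f$ lies in $V(F)^{\ls}$ exactly when $c\in\Sel_\phi(A_s)$, which is the desired bijection.

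To prove the displayed local criterion I would first observe that the bijection of Theorem~\ref{H1bij} is compatible with base change along $F\hookrightarrow F_v$ --- this is the content of the word ``natural.'' Indeed, the group-scheme isomorphism $A_s[\phi]\simeq\ker(\Res^K_F\mu_3\xrightarrow{\Nm}\mu_3)$ of Proposition~\ref{cassels} is defined over $F$, so restriction of cohomology classes is compatible with the induced isomorphism $H^1(G_F,A_s[\phi])\simeq(K^*/K^{*3})_{N=1}$ and its local analogue for $K_v=K\otimes_F F_v$; and the explicit recipe $\delta\mapsto\frac{1}{2D}\Tr_{K/F}(\delta\tau X^3)$ of Corollary~\ref{bij} is manifestly unaffected by extending scalars to $F_v$. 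Composing, a class $c$ corresponding to the $\SL_2(F)$-orbit of $f$ restricts at $v$ to the class corresponding to the $\SL_2(F_v)$-orbit of the image of $f$ in $V(F_v)$.

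Now I would apply Theorem~\ref{soluble} over the local field $F_v$ --- it holds over any field of characteristic $\neq 2,3$ --- to the base change $\phi_s$: the $\SL_2(F_v)$-orbits on $V(F_v)^{\sol}$ of discriminant $4\hat Ds$ are exactly those corresponding, under the local bijection of Theorem~\ref{H1bij}, to the image of $\partial_v$. Combined with the base-change compatibility of the previous paragraph, this is precisely the local criterion, and hence finishes the construction of the bijection between $\Sel_\phi(A_s)$ and the $\SL_2(F)$-orbits on $V(F)^{\ls}$ of discriminant $4\hat Ds$. For the last assertion, the identity of $\Sel_\phi(A_s)$ is the trivial cohomology class; it lies in the image of the global Kummer map $\partial\colon A'_s(F)\to H^1(G_F,A_s[\phi])$, so by Theorem~\ref{soluble} its orbit corresponds to the identity of $A'_s(F)/\phi(A_s(F))$, which that theorem identifies with the unique $\SL_2(F)$-orbit of reducible binary cubic forms of discriminant $4\hat Ds$, namely the orbit of $3x^2y+\hat Dsy^3$. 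The only step carrying real content is the base-change compatibility of the parametrization; all the rest is a direct transcription through the bijections already established.
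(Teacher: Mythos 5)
Your proof is correct and follows essentially the same route as the paper, which simply notes that the theorem follows immediately from Theorem~\ref{soluble} together with the definitions of $\Sel_\phi(A_s)$ and $V(F)^{\ls}$. The details you supply — base-change compatibility of the parametrization and applying Theorem~\ref{soluble} over each $F_v$ to identify the image of $\partial_v$ with the locally $\phi$-soluble orbits — are exactly the (routine) steps the paper leaves implicit.
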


Theorem \ref{Qcor} will be even more useful to us if we can find, for each locally $\phi$-soluble $f \in V(F)_{4\hat Ds}$, an $\SL_2(F)$-equivalent {\it integral} form $\tilde f \in V(\O_F)_{4\hat Ds}$.  This is clearly only possible if the product $4\hat Ds$ is itself integral.  But even if $s$ lies in $\O_F$, it is not generally the case that a locally $\phi$-soluble $\SL_2(F)$-orbit contains any integral cubic forms.

To circumvent this issue, we can choose $N \in \O_F$ such that $f$ is $\GL_2(F)$-equivalent to an integral form of discriminant $4\hat DsN^2$, under the twisted $\GL_2(F)$-action of Remark \ref{twisted}.    For example, we could act by a sufficiently divisible scalar matrix.  Note that ``twisting'' the discriminant by $N^2$ just amounts to replacing $\hat D$ by $\hat D N^2$.  The next theorem shows that we can choose the twisting factor~$N$ independent of $f$ and, indeed,  independent of $s$.

\begin{theorem}\label{uniform integrality}
Let $\phi \colon A \to A'$ be a $3$-isogeny over a number field $F$.  Then we can choose the parameter $\hat D \in \O_F$ such that for all finite places $v$ and for all $s \in \O_{F_v}$ not in the square of the maximal ideal, the following property holds: 
\begin{enumerate}[$(*)$]
\item 
If $f \in V(F_v)_{4\hat D s}^\sol$, then $f$ is $\SL_2(F_v)$-equivalent to a binary cubic form $\tilde f \in V(\O_{F_v})$.     
\end{enumerate}
\end{theorem}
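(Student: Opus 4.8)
<br>

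The plan is to verify property $(*)$ place by place, after a single judicious choice of the parameter $\hat D$. Since $\hat D$ is determined by $\phi$ only up to squares in $F^{*}$, there is room to arrange two things at once. First, $\hat D$ should be a $v$-adic unit for all but finitely many $v$: let $S$ denote the finite set of places dividing $6\mathfrak f_A$, so that outside $S$ the variety $A$ has good reduction and residue characteristic $p>3$; then the mirror algebra $K=F[t]/(t^2-\hat D)$ is unramified at each such $v$ by the criterion of Ogg--N\'eron--Shafarevich, so $v(\hat D)$ is even there. One checks that the square class of $\hat D$ then admits a representative with $v(\hat D)=0$ for every $v\notin S$ --- in the elliptic curve case this follows at once from the model (\ref{model}), whose discriminant is divisible by $D^2$, and the general case is similar (enlarging $S$ by finitely many places should a class-group obstruction arise). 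Second, at the finitely many places of $S$ we are free to multiply $\hat D$ by squares, and we do so to make $v(\hat D)$ as large as we please.

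With $\hat D$ so fixed, consider a finite place $v\notin S$. Then $p>3$; the twist $A_s$ has a quadratic twist of good reduction, namely $A$ itself, since $(A_s)_s\cong A$; and the mirror algebra of $A_s[\phi_s]$ is $F_v[t]/(t^2-\hat Ds)$ with $v(\hat Ds)=v(s)\in\{0,1\}$, so $\hat Ds$ does not lie in $\m_v^2$. Applying Theorem~\ref{integrality} to the $3$-isogeny $\phi_s\colon A_s\to A'_s$ over $F_v$ then shows that every $\phi_s$-soluble $f\in V(F_v)_{4\hat Ds}$ is $\SL_2(F_v)$-equivalent to a form in $V(\O_{F_v})$; since $\phi_s$-solubility coincides with $\phi$-solubility, this is exactly $(*)$ at $v$.

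What remains is $(*)$ at the finitely many places $v\in S$, and this is the crux. Here I would argue directly from the two parametrizations at hand: $\SL_2(F_v)$-orbits of discriminant $4\hat Ds$ correspond to classes in $(K_s^{*}/K_s^{*3})_{N=1}$ by Corollary~\ref{bij}, whereas integral $\SL_2(\O_{F_v})$-orbits of that discriminant correspond to triples $(I,\delta,\alpha)$ over the order $\O_{F_v}[t]/(t^2-\hat Ds)$ as in Theorem~\ref{cubicbij}; and the $\phi$-soluble classes form a fixed finite subgroup, the image of the Kummer map $\partial\colon A'_s(F_v)\to H^1(G_{F_v},A_s[\phi])$. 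The goal is to show that each $\phi$-soluble class is realized by an integral triple. I would split on the mirror algebra $K_s=F_v[t]/(t^2-\hat Ds)$: when $K_s/F_v$ is ramified, $(K_s^{*}/K_s^{*3})_{N=1}$ is trivial away from $p=3$, so the only $\phi$-soluble orbit is that of the integral form $3x^2y+\hat Ds\,y^3$ from Theorem~\ref{soluble}, exactly as in the bad-reduction case of Theorem~\ref{integrality}; when $K_s/F_v$ is the unramified quadratic field or is split, one must exhibit the fractional-ideal data realizing each $\phi$-soluble class integrally, and the point is that making $v(\hat D)$ large supplies enough room to absorb the non-maximality of the order $\O_{F_v}[t]/(t^2-\hat Ds)$ in $\O_{K_s}$.

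I expect this local analysis at $v\in S$ to be the main obstacle. The hardest cases are the residue characteristics $2$ and $3$, together with the case in which $K_s$ is split --- equivalently $A_s[\phi_s]\cong\mu_3$ over $F_v$ --- where the local Galois cohomology is largest, the image of the Kummer map is hardest to pin down, and the passage from a rational orbit to an integral one of the prescribed discriminant interacts subtly with the conductor of $\O_{F_v}[t]/(t^2-\hat Ds)$ inside $\O_{K_s}$. Verifying that taking $v(\hat D)$ large, rather than merely nonzero, suffices to handle all of these cases uniformly is the technical heart of the proof.
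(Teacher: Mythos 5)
Your reduction is sound as far as it goes: you correctly isolate a finite set $S$ of bad places, arrange $\hat D$ to be (essentially) a unit outside $S$, and at $v\notin S$ apply Theorem~\ref{integrality} to $\phi_s\colon A_s\to A'_s$, noting that $A_s$ has the good-reduction quadratic twist $A$ and that $v(\hat Ds)\le 1$. This matches the paper's treatment of the places outside $S$.

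The genuine gap is at the places $v\in S$, which you yourself flag as ``the technical heart'' and do not carry out. Your program there --- comparing $(K_s^*/K_s^{*3})_{N=1}$ with integral triples over the non-maximal order $\O_{F_v}[t]/(t^2-\hat Ds)$, and analysing conductors, split mirror algebras, and residue characteristics $2$ and $3$ --- is left as an open verification, so the proposal does not prove $(*)$ at exactly the places where it is not automatic. The paper avoids all of this with a much softer observation: fix $v\in S$ and a set of representatives $s_1,\dots,s_k$ of the square classes in $\O_{F_v}$ not lying in $\m_v^2$; for each $s_i$ the group $H^1(G_{F_v},A_{s_i}[\phi])$ is finite, so there are only finitely many $\SL_2(F_v)$-orbits of $\phi$-soluble forms of discriminant $4\hat Ds_i$, and one may choose one representative form per orbit. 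A single sufficiently divisible integer $N$ clears the denominators of this finite list simultaneously, via the twisted $\GL_2$-action of Remark~\ref{twisted} (e.g.\ acting by a scalar matrix), and this operation is exactly the replacement of $\hat D$ by $\hat DN^2$, which stays within the allowed freedom in choosing $\hat D$. Doing this for all $v\in S$ at once (again a finite list) forces $(*)$ at every bad place with no case analysis whatsoever; your intuition that ``making $v(\hat D)$ large supplies enough room'' is the right instinct, but the point is that it follows from this trivial finiteness-plus-rescaling argument rather than from any structural description of integral orbits over non-maximal orders, which is precisely the part your proposal leaves unproved.
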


\begin{proof}
To guarantee property $(*)$ at a finite place $v$, it is enough to verify it on a set of representatives $\{s_1,\ldots, s_k\}$ of $F^*/F^{*2}$ that are not in the square of the maximal ideal of~$\O_{F_v}$.  For each $s_i$, we choose a finite set $\{f_{i1}, \ldots, f_{it}\}$ of representatives for the $\SL_2(F_v)$-orbits of $\phi$-soluble cubic forms of discriminant $4s_i$.  For each $f_{ij}$ we can clear the denominators of $f_{ij}$ by twisting by some appropriate integer $N$.  There is therefore a single integer $N$ that clears the denominators of all these $f_{ij}$.   Thus for any finite set of places $S$, we can find $\hat D \in \O_F$ such that property $(*)$ holds at all $v \in S$.  

Choose such a $\hat D \in \O_F$ for a set $S$ containing all places of bad reduction for $A$ as well as all places above 2 and 3.  By enlarging $S$, we may even assume that $\ord_v(\hat D) < 2$ for all finite primes $v$ not in $S$.  Indeed, for the finitely many finite primes $v$ such that $\ord_v(\hat D) \geq 2$, we may scale $\hat D$ by a power of the generator of the principal ideal $\p_v^h$, where $h$ is the class number of $F$, so that $(*)$ holds at $v$.  It remain to show that $(*)$ holds for all finite places $v \notin S$, but this follows from Theorem \ref{integrality}.  
\end{proof}

In the case $F = \Q$, we have the following global result.  

\begin{theorem}\label{solint}
Let $\Sigma \subset \Z$ be a subset of squarefree integers defined by finitely many congruence conditions.  Then there exists an integer $\hat D$, depending only on $\Sigma$ and $A$, with the following property:  if $s \in \Sigma$, then every locally $\phi$-soluble $\SL_2(\Q)$-orbit on $V(\Q)$ of discriminant $4\hat Ds$ contains an integral representative $f \in V(\Z)$.  
\end{theorem}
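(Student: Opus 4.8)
The plan is to derive Theorem~\ref{solint} from the local result Theorem~\ref{uniform integrality} by a standard patching argument over the places of $\Q$, together with the observation that for squarefree $s$ one automatically has good enough local behavior away from a fixed finite set. First I would invoke Theorem~\ref{uniform integrality} to fix a choice of $\hat D \in \Z$ (which we may take positive after scaling by a square) so that property $(*)$ holds at every finite place $v$ and for every $s \in \Z_v$ not in $v^2\Z_v$. Since $\Sigma$ consists of squarefree integers, for each prime $p$ we have $\ord_p(s) \in \{0,1\}$, so $s$ is never in $p^2\Z_p$; hence property $(*)$ applies at \emph{every} finite prime $p$ and every $s \in \Sigma$. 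The congruence conditions defining $\Sigma$ play no role in this step; they are only needed to keep $\Sigma$ a well-behaved set for the later counting results, and one could note that $\hat D$ depends on $\Sigma$ only through the (finitely many) primes appearing in the congruence conditions, which we fold into the set $S$ of Theorem~\ref{uniform integrality}.

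The remaining work is the passage from local integrality at each place to a single global integral representative. Fix $s \in \Sigma$ and a locally $\phi$-soluble $\SL_2(\Q)$-orbit $\mathcal{O}$ on $V(\Q)_{4\hat D s}$, and pick any $f \in \mathcal{O}$. For each finite prime $p$, Theorem~\ref{uniform integrality} gives $g_p \in \SL_2(\Q_p)$ with $g_p \cdot f \in V(\Z_p)$. For all but finitely many $p$ the form $f$ already lies in $V(\Z_p)$, so we may take $g_p = \mathrm{id}$ outside a finite set $T$ of primes. I would then use strong approximation for $\SL_2$: since $\SL_2$ is simply connected and $\SL_2(\R)$ is non-compact, $\SL_2(\Q)$ is dense in $\prod_{p \in T} \SL_2(\Q_p)$ in the appropriate sense, so one can find $\gamma \in \SL_2(\Q)$ that is $p$-adically close to $g_p$ for each $p \in T$ and lies in $\SL_2(\Z_p)$ for $p \notin T$. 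Because $V(\Z_p)$ is open in $V(\Q_p)$ and the action is continuous, a sufficiently good approximation gives $\gamma \cdot f \in V(\Z_p)$ for all $p \in T$, while $\gamma \cdot f \in V(\Z_p)$ for $p \notin T$ since $\gamma \in \SL_2(\Z_p)$ and $f \in V(\Z_p)$ there. Thus $\tilde f := \gamma \cdot f \in \bigcap_p V(\Z_p) = V(\Z)$, and $\tilde f$ has discriminant $\det(\gamma)^6 \cdot 4\hat D s = 4\hat D s$, so $\tilde f$ is the desired integral representative in $\mathcal{O}$.

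The main obstacle, or at least the only non-formal point, is the strong approximation step: one must ensure that $\SL_2(\Q)$ surjects onto the relevant finite product of $p$-adic congruence quotients and simultaneously lands in $\SL_2(\Z_p)$ away from $T$. This is exactly strong approximation for the simply connected group $\SL_2$ relative to the archimedean place (or equivalently the fact that $\SL_2(\Z) \to \SL_2(\Z/n\Z)$ is surjective for all $n$), so there is no real difficulty, but it is the place where the argument genuinely uses something beyond the local theorems. An alternative to strong approximation is to argue orbit-by-orbit using the structure of $\SL_2(\Q)$-orbits and the class number one property of $\Z$: a locally everywhere integral $\SL_2(\Q)$-orbit of nonzero discriminant over $\Z$ contains an integral point because the relevant obstruction lies in a class group which is trivial for $\Z$; but I would present the strong approximation version as it is cleanest and most transparent. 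One should also remark that $\hat D$ here is allowed to differ from the $\hat D$ attached to $A[\phi]$ itself by a square factor, which is harmless since twisting the discriminant by a square only rescales the forms via the twisted $\GL_2$-action of Remark~\ref{twisted}.
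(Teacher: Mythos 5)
Your argument is correct and is essentially the paper's own proof: the paper deduces the theorem from Theorem~\ref{uniform integrality} together with the fact that $\SL_2$ has class number one, and your strong-approximation patching of the local elements $g_p$ is exactly an unwinding of that class-number-one statement. Your remark about the alternative route via $\Z$ being a PID likewise matches the paper's second suggested proof (via Theorems~\ref{integrality} and~\ref{cubicbij}).
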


\begin{proof}
This follows from Theorem \ref{uniform integrality} and the fact that $\SL_2$ has class number 1.  Alternatively, this follows from Theorems \ref{integrality} and \ref{cubicbij}, together with the fact that $\Z$ is a PID.   
\end{proof}

\begin{corollary}\label{integralparam}
Let $A$, $\Sigma$, and $\hat D$ be as in Theorem $\ref{solint}$.  Then for any $s \in \Sigma$, there is a bijection between $\Sel_{\phi}(A_s)$ and the $\SL_2(\Q)$-orbits of locally $\phi$-soluble $f \in V(\Z)$ of discriminant $4\hat Ds$.  
\end{corollary}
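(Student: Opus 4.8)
The plan is to deduce Corollary~\ref{integralparam} by combining the global parametrization from Theorem~\ref{Qcor} with the integrality statement of Theorem~\ref{solint}. The starting point is Theorem~\ref{Qcor} applied over $F = \Q$: for each $s \in \Sigma \subset \Q^*/\Q^{*2}$, it gives a bijection between $\Sel_\phi(A_s)$ and the set of $\SL_2(\Q)$-orbits on $V(\Q)^\ls$ of discriminant $4\hat D s$. So it suffices to show that, with the choice of $\hat D$ provided by Theorem~\ref{solint}, every such orbit meets $V(\Z)$, and that the map ``restrict an $\SL_2(\Q)$-orbit to its integral points'' lands in a well-defined set of $\SL_2(\Z)$-orbits without collapsing distinct classes.

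First I would fix $s \in \Sigma$ and take an $\SL_2(\Q)$-orbit $\mathcal{O}$ on $V(\Q)^\ls$ of discriminant $4\hat D s$. Being locally $\phi$-soluble, $\mathcal{O}$ is in particular $\phi$-soluble, so Theorem~\ref{solint} applies directly and yields some $f \in \mathcal{O} \cap V(\Z)$ of discriminant $4 \hat D s$ (here we use that $s$ is squarefree, so $4\hat D s$ is the genuine discriminant and no additional twisting is needed). This shows the assignment $\mathcal{O} \mapsto \{\SL_2(\Z)\text{-orbit of } f\}$ is defined and surjective onto the set of $\SL_2(\Z)$-orbits of integral, discriminant-$4\hat D s$, locally $\phi$-soluble forms. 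For injectivity, suppose two integral forms $f_1, f_2$ of discriminant $4\hat D s$ are $\SL_2(\Q)$-equivalent; I would invoke that $\SL_2$ has class number one over $\Q$ (equivalently, that $\Z$ is a PID, as in the proof of Theorem~\ref{solint}, via Theorem~\ref{cubicbij} with $R = \Z$): the set of $\SL_2(\Z)$-orbits on $V(\Z)_{4\hat D s}$ injects into the set of $\SL_2(\Q)$-orbits on $V(\Q)_{4\hat D s}$. Hence if $f_1, f_2$ are $\SL_2(\Q)$-equivalent they are already $\SL_2(\Z)$-equivalent, and the map is a bijection. Composing with Theorem~\ref{Qcor} gives the desired bijection between $\Sel_\phi(A_s)$ and $\SL_2(\Q)$-orbits (equivalently $\SL_2(\Z)$-orbits) of locally $\phi$-soluble $f \in V(\Z)$ of discriminant $4\hat D s$.

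The only genuine point requiring care — and the step I expect to be the mild obstacle — is the class-number-one bookkeeping: one must be sure that passing between ``$\SL_2(\Z)$-orbits of integral forms'' and ``$\SL_2(\Q)$-orbits of integral representatives'' is a bijection and not merely a surjection, i.e.\ that an integral form is not split into several $\SL_2(\Z)$-orbits that fuse over $\Q$. This is exactly the content of Theorem~\ref{cubicbij} over the PID $\Z$ combined with Corollary~\ref{bij} over $\Q$: for a fixed discriminant $4D$ with $D = \hat D s$, Theorem~\ref{cubicbij} identifies $\SL_2(\Z)$-orbits on $V(\Z)_{4D}$ with equivalence classes of $S$-triples for $S = \Z[t]/(t^2 - D)$, while Corollary~\ref{bij} identifies $\SL_2(\Q)$-orbits with $(K^*/K^{*3})_{N=1}$; the natural map from the former to the latter sends the triple $(I,\delta,\alpha)$ to the class of $\delta$, and one checks this is injective because two $S$-triples with the same $\delta$-class in $(K^*/K^{*3})_{N=1}$ differ by a principal ideal scaling, hence are $S$-equivalent. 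Everything else is a formal concatenation of already-established bijections, so the corollary follows once this compatibility is recorded.
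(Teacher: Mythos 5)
Your core argument is exactly the paper's (implicit) one, and it is correct: Theorem \ref{Qcor} identifies $\Sel_\phi(A_s)$ with the $\SL_2(\Q)$-orbits on $V(\Q)^\ls$ of discriminant $4\hat D s$, and Theorem \ref{solint} says every such orbit contains a form in $V(\Z)$, so the $\SL_2(\Q)$-orbits of locally $\phi$-soluble integral forms of that discriminant are precisely all of these orbits; composing gives the corollary. (One small slip in wording: local $\phi$-solubility does not imply $\phi$-solubility over $\Q$ --- that would be a Hasse principle --- but this is harmless, since the hypothesis of Theorem \ref{solint} is exactly local solubility.)

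The detour in your second and third paragraphs about $\SL_2(\Z)$-orbits is not needed and, as justified, not correct. The corollary is deliberately phrased in terms of $\SL_2(\Q)$-orbits of integral forms precisely so that no comparison between integral and rational equivalence is required: the set of ``$\SL_2(\Q)$-orbits of locally $\phi$-soluble $f\in V(\Z)$ of discriminant $4\hat Ds$'' is by definition a subset of the rational orbits, and Theorem \ref{solint} says it is all of them; no injectivity statement is involved. Your stronger parenthetical claim that rationally equivalent integral forms are $\SL_2(\Z)$-equivalent does not follow from the argument you give: under Theorem \ref{cubicbij}, the $\SL_2(\Z)$-orbit corresponds to a triple $(I,\delta,\alpha)$, and the rational orbit only remembers the class of $\delta$ in $(K^*/K^{*3})_{N=1}$; two triples with the same $\delta$-class need not be related by a scaling $\kappa$, since after normalizing $\delta$ the ideal $I$ is extra data (in particular non-invertible ideals of the generally non-maximal order $S=\Z[t]/(t^2-\hat Ds)$ can produce several integral orbits inside one rational orbit). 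The ``class number one'' fact is used in the paper only inside the proof of Theorem \ref{solint} itself, to pass from the local integrality of Theorem \ref{uniform integrality} to a global integral representative, not to identify $\SL_2(\Z)$-classes with $\SL_2(\Q)$-classes. If you delete the $\SL_2(\Z)$ discussion, what remains is a complete proof and is the same as the paper's.
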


\begin{remark}{\em 
A similar result holds for any number field $F$ of class number 1, but we will not use this fact explicitly in our proof of Theorem \ref{main}.}
\end{remark}

\section{The average size of $\Sel_\phi(A_s)$}\label{counting}
In this section we prove Theorem \ref{main}, by counting equivalence classes of everywhere locally $\phi$-soluble integral binary cubic forms of bounded discriminant.  If $F = \Q$, one can give a proof similar in flavor to the proof in \cite{j=0}, using Theorem \ref{solint}.  Since we wish to handle quadratic twist families over an arbitrary number field $F$, we will use the general counting results of Bhargava, Shankar, and Wang \cite{BSW2} over global fields.  We recall the setup and notation.

Let $\phi \colon A \to A'$ be a 3-isogeny of abelian varieties over~$F$.  For each $s \in F^*/F^{*2}$, we have the quadratic twist $\phi_s: A_s \to A'_s$.  Consider the action of $F^*$ on $F$ given by $\alpha.s = \alpha^2 s$.  The height function on $F^*/F^{*2}$ from the introduction lifts to a function on $F$: if $\tilde s$ is a representative of $s \in F^*/F^{*2}$, and $I(\tilde s)$ is the ideal 
\[I(\tilde s) = \{a \in F \colon a^2\tilde s \in \O_F \},\]
then 
\[H(s) = (NI(\tilde s))^2\prod_{\p \in M_\infty} |\tilde s|_\p,\]
where $M_\infty$ denotes the set of infinite places of $F$.  There is also a natural height function on the set $F_\infty := \prod_{\p \in M_\infty} F_\p$, defined by 
\[H((s_\p)_{\p \in M_\infty}) = \prod_{\p \in M_\infty} |s_\p|_\p.\]

In order to take averages over subsets of twists defined by local conditions on $s$, we require a notion of functions on $F$ that are defined by local conditions.  We say a function $\psi \colon F \to [0,1]$ is {\it defined by local congruence conditions} if there exist local functions $\psi_\p \colon F_\p \to [0,1]$ for every finite place $\p$ of $F$, and a function $\psi_\infty \colon F_\infty \to [0,1]$, such that the following two conditions hold:
\begin{enumerate}
\item For all $w \in F$, the product $\psi_\infty(w)\prod_{\p \notin M_\infty} \psi_\p(w)$ converges to $\psi(w)$.
\item For each finite place $\p$, and for $\p = \infty$, the function $\psi_\p$ is nonzero on some open set and locally constant outside some closed subset of $F_\p$ of measure 0.  
\end{enumerate}
A subset of $F$ is said to be {\it defined by local congruence conditions} if its characteristic function is defined by local congruence conditions.   

Let $\Sigma_0$ be the fundamental domain for the above action of $F^*$ on $F$, constructed in \cite[\S3.4]{BSW2}.  Then $\Sigma_0$ is defined by local congruence conditions.  For any $X > 0$, we denote $F_X$ denote the set of $s \in F$ with $H(s) < X$.  Then $\Sigma_0 \cap F_X$ is finite.  We will think of the abelian varieties $A_s$ as elements of $\Sigma_0$.

A family of quadratic twists defined by congruence conditions is a subset $\Sigma_1 \subset \Sigma_0$ defined by local congruence conditions.  In that case, the characteristic function  $\chi_{\Sigma_1}$ of $\Sigma_1$ factors as 
\[\chi_{\Sigma_1, \infty} \prod_{\p \notin M_\infty} \chi_{\Sigma_1, \p}.\]  
For each finite place $\p$ of $F$, let $\Sigma_{1,\p}$ be the subset of $F_\p$ whose characteristic function is $\chi_{\Sigma_1, p}$, and let $\Sigma_{1,\infty}$ be the subset of $F_\infty$ whose characteristic function is $\chi_{\Sigma_1, \infty}$.  Let $\O_\p$ denote the completion of the ring of integers $\O_F$ at $\p$ and $v_\p$ the $\p$-adic valuation normalized so that the valuation of a uniformizer is 1.   A family of quadratic twists given by $\Sigma_1$ is {\it large} if $\Sigma_{1,\p}$ is the set $\O_\p(2) = \{s \in \O_\p \colon v_\p(s) < 2\}$, for all but finitely many finite place $\p$, and if $\Sigma_{1,\infty}$ is a non-empty union of cosets in $F^*_\infty/F_\infty^{*2}$.  We recall that $\Sigma_{0,\p} = \O_\p(2)$ for all finite $\p$, so $\Sigma_0$ is itself large.

We define 
\begin{equation}\label{cinftyphis}
c_\infty(\phi_s) = \prod_{\p \in M_\infty} c_\p(\phi_{s_\p}),
\end{equation}
for any $s  = (s_\p) \in F^*_\infty$.  We let $F_{\infty, X}$ be the set of $s \in F_\infty$ with height less than $X$.              
  
\begin{theorem}\label{globalS}Let $\Sigma_1$ be a large family of quadratic twists of $E$.  When the elliptic curves $E_s$, $s \in \Sigma_1$, are ordered by height of $H(s)$ of $s$, the average size of $\Sel_{\phi_s}(E_s)$ is
\[1 + \dfrac{\displaystyle\int_{s \in \Sigma_{1,\infty} \cap F_{\infty,1}} c_\infty(\phi_s) \,  d\mu_\infty^*(s)}{\displaystyle\int_{s \in \Sigma_{1,\infty} \cap F_{\infty,1}} d\mu_\infty^*(s)} \cdot \prod_{\p \notin M_\infty} \dfrac{\displaystyle\int_{s \in \Sigma_{1,\p}} c_\p(\phi_s) ds}{\displaystyle\int_{s \in \Sigma_{1,\p}} ds}.\]
Here $ds$ denotes a Haar measure on $\O_\p$, 
and $d\mu_\infty^*$ a Haar measure on $F_\infty$.
\end{theorem}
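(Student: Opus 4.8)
The plan is to convert the average size of $\Sel_{\phi_s}(E_s)$ into a count of everywhere locally $\phi$-soluble integral binary cubic forms of bounded discriminant, and then to feed this count into the geometry-of-numbers machinery of \cite{BSW2}. The first step is to apply Theorem \ref{Qcor} to each twist $\phi_s\colon E_s\to E_s'$: it identifies $\Sel_{\phi_s}(E_s)$ with the set of $\SL_2(F)$-orbits on $V(F)^{\ls}_{4\hat Ds}$, the trivial Selmer class being the unique reducible orbit. Summing over $s\in\Sigma_1$ with $H(s)<X$, the quantity $\sum_s |\Sel_{\phi_s}(E_s)|$ therefore equals $|\Sigma_1\cap F_X|$ (one reducible orbit for each $s$, which supplies the ``$1+$'') plus the number $N^{\mathrm{irr}}(\Sigma_1;X)$ of \emph{irreducible} locally $\phi$-soluble $\SL_2(F)$-orbits whose discriminant lies in $4\hat D\cdot(\Sigma_1\cap F_X)$. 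It remains to show that $N^{\mathrm{irr}}(\Sigma_1;X)$ is asymptotic to $|\Sigma_1\cap F_X|$ times the product of local factors in the statement.

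Next I would pass to integral models. Fix $\hat D\in\O_F$ as provided by Theorem \ref{uniform integrality} and enlarge the fixed finite set $S$ of bad places so that it contains all places above $6$, all places of bad reduction of $E$, and all places dividing $\hat D$. Then every orbit contributing to $N^{\mathrm{irr}}$ is everywhere locally integral; since each $s\in\Sigma_0$ is squarefree, the discriminant $4\hat Ds$ is squarefree away from $S$, and distinct $s\in\Sigma_1\subseteq\Sigma_0$ lie in distinct squareclasses, so $s$ is recovered from the discriminant of an integral representative. Using strong approximation for $\SL_2$ and the class-group bookkeeping in the set-up of \cite{BSW2}, $N^{\mathrm{irr}}(\Sigma_1;X)$ is recast as a count of $\SL_2(\O_F)$-orbits (over a fixed set of ideal class representatives) of irreducible integral binary cubic forms of discriminant $4\hat Ds$, $s\in\Sigma_1\cap F_X$, satisfying the local $\phi$-solubility conditions. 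I would then check that these local conditions are of the type handled by \cite{BSW2}: at a finite $\p\notin S$, Propositions \ref{solsgood} and \ref{intorb} (when $v_\p(s)=0$) and Theorem \ref{integrality} (when $v_\p(s)=1$, so that $K_\p$ is a ramified quadratic field and $(K_\p^*/K_\p^{*3})_{N=1}=1$) show that \emph{every} integral orbit of discriminant $4\hat Ds$ with $v_\p(\disc)<2$ is automatically $\phi$-soluble at $\p$; at the finitely many $\p\in S$ the condition is a congruence on $f$ modulo a fixed power of $\p$; and at the archimedean places it is automatic, since $H^1(G_\R,E_s[\phi])=0$, leaving only the constraint $s\in\Sigma_{1,\infty}$. (Here the hypothesis that $\Sigma_1$ is large is what guarantees that the local conditions are $\O_\p(2)$ for all but finitely many $\p$, so that the resulting sieve and Euler product converge.)

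Now I would invoke the main counting theorem of \cite{BSW2}: ordering by $H(s)$ matches ordering integral forms by the norm of their discriminant up to the fixed factor $4\hat D$, and the theorem yields $N^{\mathrm{irr}}(\Sigma_1;X)\sim |\Sigma_1\cap F_X|\cdot\mu_\infty^{\mathrm{loc}}\cdot\prod_{\p\notin M_\infty}\mu_\p^{\mathrm{loc}}$. It then remains to identify the local densities. By the standard relation between $\p$-adic orbit counts and volumes, $\mu_\p^{\mathrm{loc}}$ is the average over $s\in\Sigma_{1,\p}$ of the number of $\phi$-soluble $\SL_2(F_\p)$-orbits of discriminant $4\hat Ds$, each weighted by $1/|\mathrm{Stab}_{\SL_2(F_\p)}|$. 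By Theorem \ref{soluble} applied over $F_\p$ the number of such orbits is $|E_s'(F_\p)/\phi(E_s(F_\p))|$, and by Theorem \ref{H1bij} the stabilizer is $E_s[\phi](F_\p)$, so the weighted count is exactly $c_\p(\phi_s)$, giving $\mu_\p^{\mathrm{loc}}=\int_{\Sigma_{1,\p}}c_\p(\phi_s)\,ds\big/\int_{\Sigma_{1,\p}}ds$. The identical computation at the archimedean places, with the measure $d\mu_\infty^*$ and the height-$<1$ normalization, yields $\mu_\infty^{\mathrm{loc}}=\int_{\Sigma_{1,\infty}\cap F_{\infty,1}}c_\infty(\phi_s)\,d\mu_\infty^*(s)\big/\int_{\Sigma_{1,\infty}\cap F_{\infty,1}}d\mu_\infty^*(s)$. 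Finally I would discard the negligible contributions: the single squareclass with $\hat Ds\in F^{*2}$ — the only one for which $E_s[\phi](F)\neq 1$, hence the only one where the passage from a weighted to an unweighted global orbit count is not an identity — contributes $O(1)$; integral reducible orbits other than those mapping to the trivial Selmer class contribute $o(X)$; and the error terms of \cite{BSW2} are $o(|\Sigma_1\cap F_X|)$.

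The step I expect to be the main obstacle is making the second paragraph genuinely uniform, i.e.\ executing the squarefree sieve over an arbitrary number field. This requires the uniformity estimate that the number of integral binary cubic forms of bounded discriminant divisible by $\p^2$ is small uniformly in $\p$, and it requires keeping the class-group contributions in the count of $\SL_2(\O_F)$-orbits in lockstep with the count of squareclasses $|\Sigma_1\cap F_X|$ so that they cancel in the ratio. Both of these are precisely what is imported from \cite{BSW2}; the remaining work is the local bookkeeping carried out above using Theorems \ref{Qcor}, \ref{uniform integrality}, \ref{integrality}, \ref{soluble}, and \ref{H1bij}.
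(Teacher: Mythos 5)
Your proposal follows essentially the same route as the paper: reduce via Theorem \ref{Qcor} to counting irreducible locally $\phi$-soluble $\SL_2(F)$-orbits of binary cubic forms with discriminant in $4\hat D\cdot\Sigma_1$, feed this into the counting machinery of \cite{BSW2} (with integrality/local-condition input from Theorems \ref{uniform integrality} and \ref{integrality}, and the uniformity estimate coming from the fact that, by Propositions \ref{intorb} and \ref{solsgood}, away from finitely many places the condition is just squarefreeness of the discriminant), and identify the local densities with the Selmer ratios $c_\p(\phi_s)$ via Theorems \ref{soluble} and \ref{H1bij}. This matches the paper's proof, which simply invokes \cite[Thm.~13]{BSW2} and verifies its axioms with exactly these ingredients, so your argument is correct and not materially different.
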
   

By Theorem \ref{integrality}, the Euler product above is in fact a finite product, and it is exactly equal to the average size of $c(\phi_s)$ for $s \in F^*/F^{*2}$.  
\begin{proof}
By Corollary \ref{Qcor}, it suffices to count the number of irreducible locally $\phi$-soluble $\SL_2(F)$-orbits on $V(F)$ with discriminant in $\Sigma_1$ and with height less than $X$.   Theorem~\ref{globalS} then follows from the very general counting result  \cite[Thm.~13]{BSW2}.  To apply that result to our situation,  we take $G = \SL_2$ and $V = \Sym^3\,  2$, the space of binary cubic forms.  The GIT quotient $S$ is the affine line $\A^1$ and the map $\inv\colon V \to S$ is the discriminant $f \mapsto \Disc(f)$.   We take $V(F)^\irr$ to be the subset of irreducible cubic forms, and the weight function $m_0$ is the characteristic function of $V(F)^{\ls, E} \cap \inv^{-1}(\kappa.\Sigma_1) \subset V(F)$, for some non-zero $\kappa \in \O_F$, which we will choose momentarily.  That the $\p$-adic integrals in Theorem \ref{globalS} coincide with those in \cite[Thm.~13]{BSW2} follows from Theorem~\ref{H1bij} and Theorem~\ref{soluble}.  Finally, we note that the Tamagawa number $\tau_{\SL_2,F}$ is equal to 1.  Thus, all that remains is to verify the six axioms in \cite[Thm.\ 13]{BSW2} for these choices of input.        

Axiom - (G,V) is satisfied since $\Delta$ is a degree 4 polynomial in the coordinates of $V$ and since $\SL_2$ is semisimple.  To guarantee Axiom - Local Condition is satisfied, we must choose $\kappa$ carefully, but Theorem \ref{integrality} exactly says that such a $\kappa$ exists.  Axiom - Local Spreading is satisfied since there is a section of $\inv \colon V \to S$ given by $s \mapsto 3x^2y + (s/4)y^3$, which is defined over $\O_F[1/2]$.  One verifies Axiom - Counting at Infinity I and II exactly as in the case \cite[\S4.1]{BSW2} of $\PGL_2$ acting on binary quartic forms.  Finally, to verify Axiom - Uniformity Estimate in our situation, note that by Propositions~\ref{intorb} and \ref{solsgood}, the set $\inv^{-1}(\Sigma_{1,\p}) \subset V(\O_\p)$ contains all cubic forms with discriminant not divisible by $\p^2$, for all but finitely many primes $\p$ of $F$.  The uniformity estimate therefore follows from \cite[Thm.~17]{BSW1}.\end{proof}  

\section{Applications to ranks of abelian varieties}\label{sec:abvarranks}
In this section we prove Theorems \ref{ab var ranks}, \ref{avgrank}, and \ref{globalbounds}.  

\begin{lemma}\label{lem:ranks-add}
Let $\phi_1\colon A_1 \to A_2$ and $\phi_2 \colon A_2 \to A_3$ be isogenies of abelian varieties and set $\psi = \phi_2 \circ \phi_1$.  Then there is an exact sequence
\[
\mathrm{Sel}_{\phi_1}(A_1) \to \mathrm{Sel}_\psi(A_1) \to \mathrm{Sel}_{\phi_2}(A_2).
\]
\end{lemma}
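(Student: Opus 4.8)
The plan is to produce the exact sequence by comparing the long exact cohomology sequences attached to the isogenies $\phi_1$, $\phi_2$, and $\psi$, and then intersecting with the local Selmer conditions. First I would recall that there is a natural short exact sequence of finite group schemes
\[
0 \to A_1[\phi_1] \to A_1[\psi] \to A_2[\phi_2] \to 0,
\]
where the first map is the inclusion and the second is the restriction of $\phi_1$ (which carries $A_1[\psi]$ onto $\ker \phi_2 = A_2[\phi_2]$, with kernel exactly $A_1[\phi_1]$). Taking $G_F$-cohomology gives an exact sequence
\[
H^1(G_F, A_1[\phi_1]) \xrightarrow{\iota} H^1(G_F, A_1[\psi]) \xrightarrow{\phi_{1,*}} H^1(G_F, A_2[\phi_2]),
\]
and the same sequence holds over every completion $F_v$. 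So the purely cohomological version of the claimed sequence is immediate; the content is that the three maps respect the Selmer subgroups.

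The key step is therefore a local compatibility check: for each place $v$, one must show that $\iota$ sends $\mathrm{im}(\partial_v \colon A_2(F_v) \to H^1(G_{F_v}, A_1[\phi_1]))$ into $\mathrm{im}(\partial_v \colon A_3(F_v) \to H^1(G_{F_v}, A_1[\psi]))$, and that $\phi_{1,*}$ sends the latter image into $\mathrm{im}(\partial_v \colon A_3(F_v) \to H^1(G_{F_v}, A_2[\phi_2]))$. Both follow from the functoriality of the Kummer (connecting) maps with respect to the commutative diagrams of isogenies: the square relating $\phi_1$ and $\psi$ via the inclusion $A_1[\phi_1] \hookrightarrow A_1[\psi]$, and the square relating $\psi$ and $\phi_2$ via $\phi_{1,*}$. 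Concretely, $\partial_v$ for $\psi$ applied to $\phi_2 Q$ (with $Q \in A_3(F_v)$ chosen as a $\phi_2$-preimage... rather, one chases a point of $A_2(F_v)$ lifting to $A_1$ under $\psi$) maps under $\phi_{1,*}$ to $\partial_v$ for $\phi_2$ applied to that point, which lands in the image of the Kummer map by construction. Dually, any class in $\mathrm{Sel}_{\phi_1}(A_1)$ is locally $\partial_v(P)$ for $P \in A_2(F_v)$; since $A_2(F_v)$ maps to $A_3(F_v)$ via $\phi_2$ but $P$ itself need not come from $A_3(F_v)$, one instead uses that $\iota \circ \partial_v^{\phi_1}(P) = \partial_v^{\psi}(P)$ where now $P$ is viewed via the inclusion of Kummer sequences — i.e. $\partial_v^\psi$ restricted along $A_2(F_v) \to A_3(F_v)$... the cleanest route is to simply observe that the snake-lemma comparison of the three Kummer sequences over $F_v$ yields
\[
A_2(F_v)/\phi_1 A_1(F_v) \to A_3(F_v)/\psi A_1(F_v) \to A_3(F_v)/\phi_2 A_2(F_v)
\]
exact, and these are precisely the images of the local Kummer maps; exactness of Selmer groups then follows from a diagram chase combining this with the exact cohomology sequence above.

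I expect the main obstacle to be purely bookkeeping rather than conceptual: keeping straight which abelian variety is the source of each Kummer map (it is always $A_1$ for the middle term, but $A_2$ versus $A_3$ for the outer terms) and verifying that a class in $\mathrm{Sel}_\psi(A_1)$ mapping to $0$ in $H^1(G_{F_v},A_2[\phi_2])$ for all $v$, and landing in $\mathrm{Sel}_{\phi_2}(A_2)$-trivially, actually lifts to a class in $\mathrm{Sel}_{\phi_1}(A_1)$ — i.e. that the lift guaranteed cohomologically by $\iota$ satisfies the $\phi_1$-local conditions. This last point uses that if $\xi \in H^1(G_{F_v}, A_1[\phi_1])$ has $\iota(\xi) = \partial_v^\psi(R)$ for some $R \in A_1(F_v)$... more precisely for $R$ in the appropriate group, then $\xi$ differs from a genuine $\partial_v^{\phi_1}$-image by something in the image of $H^0(G_{F_v}, A_2[\phi_2])$, which is itself Kummer-trivial; so $\xi$ itself is locally soluble. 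Once these local statements are in hand, the global exactness
\[
\mathrm{Sel}_{\phi_1}(A_1) \to \mathrm{Sel}_\psi(A_1) \to \mathrm{Sel}_{\phi_2}(A_2)
\]
drops out by intersecting the exact sequence of $H^1$'s with the product of local conditions, since a class in the middle that dies in $\mathrm{Sel}_{\phi_2}(A_2)$ already dies in all of $H^1(G_F, A_2[\phi_2])$ and hence lifts to $H^1(G_F, A_1[\phi_1])$, the lift being forced into the Selmer subgroup by the local analysis.
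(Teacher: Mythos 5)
Your proposal is correct and is essentially the paper's argument: the paper simply invokes ``a standard diagram chase'' (which in fact yields the longer five-term sequence with kernel term $A_2(F)[\phi_2]/\phi_1(A_1(F)[\psi])$ and cokernel term in $\Sha$), and your write-up is precisely that chase, built on the kernel sequence $0 \to A_1[\phi_1] \to A_1[\psi] \to A_2[\phi_2] \to 0$ and the observation that the connecting map on $H^0(F_v,A_2[\phi_2])$ agrees with the restriction of the local Kummer map, so the global lift automatically satisfies the $\phi_1$-local conditions.
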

\begin{proof}
In fact, a standard diagram chase yields the exact sequence
\begin{equation}\label{fiveterm}
0 \to A_2(F)[\phi_2] /\phi_1(A_1(F)[\psi]) \to \mathrm{Sel}_{\phi_1}(A_1) \to \mathrm{Sel}_\psi(A_1) \to \mathrm{Sel}_{\phi_2}(A_2) \to \dfrac{\Sha(A_2)[\phi_2]}{\phi_1(\Sha(A_1)[\psi])} \to 0. 
\end{equation}
\end{proof}

\begin{proof}[Proof of Theorem $\ref{ab var ranks}$]
Suppose $\psi \colon A \to A$ is an isogeny which factors as $\phi_1 \circ \dots \circ \phi_n$, with each $\phi_i$ of degree $3$. Let $A_i$ denote the domain of $\phi_i$, and let $\O$ be the subring of $\End(A) \simeq \End(A_s)$ generated by $\psi$.  Let $r(A_s)$ be the rank of $A_s(F)$.  Write $t$ for the rank of $\O$ as a $\Z$-module, and suppose the ring $\O/\psi$ has size $3^e$.  Then the group $A_s(F)/\psi A_s(F)$ has size at least $3^{\frac{e}{t}r(A_s)}$ and embeds in $\Sel_\psi(A_s)$.  Each of the groups $\Sel_{\phi_i}(A_{i,s})$ are $\F_3$-vector spaces, so by Lemma \ref{lem:ranks-add} and the trivial inequality $k \leq 3^k$, for every $k\in \mathbb{Z}$, we find 
\[
\frac{e}{t}r(A_s) \leq \sum_{i=1}^{n} \dim_{\F_3}\mathrm{Sel}_{\phi_{i,s}} (A_{i,s}) \leq \sum_{i=1}^{n} |\mathrm{Sel}_{\phi_{i,s}}(A_{i,s})|,
\]
for any $s\in F^*/F^{*2}$.  Upon taking the average over $s$, Theorem \ref{main} immediately yields the result.
\end{proof}

\subsection{Proof of Theorem \ref{avgrank}}
We take $A = E$ in Theorem \ref{main} and recall that we wish to establish an explicit upper bound on the average rank of the twists $E_s$.  For each $m \in \Z$, we define $T_m(\phi) = \{s \in F^*/F^{*2} \colon c(\phi_s) = 3^m\}$.  From the proof of Theorem \ref{main}, we see that $T_m$ is either empty or cut out by finitely many local conditions.  Moreover, the average size of $\Sel_\phi(E_s)$ for $s \in T_m$ is $1 + 3^m$.  Cassels' formula \cite{Cassels8},
\begin{equation}\label{cassels formula}
c(\phi) = \dfrac{|\Sel_\phi(E)||E'[\hat\phi](\Q)|}{|\Sel_{\hat\phi}(E')||E[\phi](\Q)|},
\end{equation} 
shows that $c(\phi)c(\hat\phi) = 1$.    Thus the average size of $\Sel_{\hat \phi}(E'_s)$ for $s \in T_m$ is $1 + 3^{-m}$.  The proof of Theorem \ref{avgrank} is now exactly as in \cite[Thm.\ 43]{j=0}.  Namely, a convexity bound shows that the average {rank} of $\Sel_\phi(E_s) \oplus \Sel_{\hat \phi}(E'_s)$ for $s \in T_m$ is at most $|m| + 3^{-|m|}$.  Since this rank is an upper bound on the rank of $\Sel_3(E_s)$, which is itself an upper bound on the rank of $E_s(F)$, the theorem follows for any $\Sigma$ contained in $T_m$.  The general case follows by writing $\Sigma = \bigcup_m \left(\Sigma \cap T_m\right)$ and adding together the rank bounds on each $\Sigma \cap T_m$ weighted by their density.       

\subsection{Proof of Theorem \ref{globalbounds}}

We now adapt the ideas of the proof of Theorem \ref{avgrank} to provide an explicit proportion of twists $E_s$ that have rank 0 or $3$-Selmer rank 1.  Let $T_m(\phi)$ be as in the previous proof, so that the average of the rank of $\mathrm{Sel}_3(E_s)$ for $s\in T_0(\phi)$ is at most $1$.  The only additional input needed is a result of Cassels \cite{Cassels8} which shows that if $s \in T_m(\phi)$, then $\dim_{\mathbb{F}_3} \mathrm{Sel}_3(E_s) \equiv m \pmod{2}$; see \cite[Prop.\ 42(ii)]{j=0}.  In particular, every twist within $T_0(\phi)$ has even $3$-Selmer rank, so it follows that at least $50\%$ of the twists in $T_0(\phi)$ have rank 0.  Similarly, for twists in either $T_1(\phi)$ or $T_{-1}(\phi)$, the $3$-Selmer rank is odd and the average $3$-Selmer rank is $4/3$.  It then immediately follows that at least $5/6$ of such twists must have $3$-Selmer rank $1$.

\section{Computing local $\phi$-Selmer ratios}\label{local computations}
Suppose $F$ is a finite extension of $\Q_p$ or $\R$, and let $\phi \colon A \to A'$ be a 3-isogeny of abelian varieties over~$F$.  The goal of this section is to give explicit formulas for the (local) Selmer ratio  \[c(\phi) := \dfrac{|\coker\,  \phi \colon A(F) \to A'(F)|}{|\ker \phi \colon A(F) \to A'(F)|}.\]

Since we have fixed $F$ to be a local field in this section, we drop the subscript $\p$ from the notation $c_\p$ used in the introduction.  There should be no confusion with the {\it global} Selmer ratio which is only defined for isogenies over global fields, and which will not be discussed in this section.   

\subsection{Selmer ratios over $\R$ and $\C$}
Computing $c(\phi)$ is simple when $F$ is archimedean.  If $F  = \R$ or $\C$, then every binary cubic form $f \in V(F)$ is reducible, and there is a unique $\SL_2(F)$-orbit of binary cubic forms of any fixed discriminant.  This corresponds, under the bijection in Theorem \ref{soluble}, to the fact that $A'(F)/\phi(A(F))$ is always trivial, since the component group of $A(F)$ is a 2-group.  We conclude:

\begin{proposition}\label{R}
If $F = \C$, then $c(\phi)  = 1/3$.  If $F = \R$, then 
\begin{equation*}c(\phi) = 
\begin{cases}
\frac{1}{3} & A[\phi](\R) \simeq \Z/3\Z; \\
1 & A[\phi](\R) = 0.
\end{cases}\end{equation*}
\end{proposition}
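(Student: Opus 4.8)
The plan is to compute $c(\phi)$ directly from its definition $c(\phi) = |A'(F)/\phi(A(F))| \big/ |A[\phi](F)|$ by separately pinning down the numerator and the denominator over an archimedean field. The key observation, already noted in the paragraph preceding the statement, is that every binary cubic form over $\R$ or $\C$ is reducible, so by Corollary~\ref{bij} there is exactly one $\SL_2(F)$-orbit in $V(F)_{4\hat Ds}$ for each discriminant. By Theorem~\ref{soluble} this unique orbit is the orbit of the reducible form $3x^2y + \hat Dsy^3$ attached to the identity, hence $A'(F)/\phi(A(F))$ is trivial and the numerator of $c(\phi)$ equals $1$ in all cases. (An alternative route to the numerator: $A'(F)/\phi(A(F))$ is a quotient of the $3$-torsion-free group $A'(F)$ by a finite-index subgroup, and since the component group of the real Lie group $A(F)$ is a $2$-group while $A(F)^\circ$ is $3$-divisible, the cokernel of $\phi$ on $F$-points is a quotient of a $2$-group killed by $3$, so it vanishes; over $\C$ the group $A'(\C)$ is already $3$-divisible.)

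Next I would evaluate the denominator $|A[\phi](F)|$. Since $A[\phi](\bar F)\simeq \Z/3\Z$, either $G_F$ acts trivially — in which case $A[\phi](F)\simeq \Z/3\Z$ has order $3$ — or it acts through the nontrivial character of $\Gal(K_0/F)$, in which case $A[\phi](F)=0$. Over $F=\C$ there is no nontrivial Galois action, so $A[\phi](\C)\simeq \Z/3\Z$ has order $3$, giving $c(\phi) = 1/3$. Over $F=\R$ both possibilities occur, and they are distinguished exactly by whether $A[\phi](\R)\simeq\Z/3\Z$ or $A[\phi](\R)=0$, yielding the two stated cases $c(\phi)=1/3$ and $c(\phi)=1$ respectively.

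Assembling these, $c(\phi) = 1/|A[\phi](F)|$ in every archimedean case, which is precisely the asserted formula. I do not anticipate a genuine obstacle here; the one point that warrants care is justifying that the numerator is trivial, and I would present the cocycle/cubic-form argument via Theorem~\ref{soluble} as the primary one (with the Lie-theoretic divisibility remark as a cross-check), since it requires nothing beyond results already established in the excerpt. One should also note for completeness that in the $F=\R$, $A[\phi](\R)=0$ case the mirror algebra $K=F[t]/(t^2-\hat D)$ is $\C$ and $(K^*/K^{*3})_{N=1}$ is trivial, which re-confirms the vanishing of the numerator independently.
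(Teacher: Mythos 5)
Your proposal is correct and takes essentially the same route as the paper: over $\R$ or $\C$ every binary cubic form is reducible and there is a single $\SL_2(F)$-orbit per discriminant, so by Theorem~\ref{soluble} the numerator $A'(F)/\phi(A(F))$ is trivial (the paper invokes the same Lie-theoretic fact you offer as a cross-check, that the component group is a $2$-group while the cokernel of $\phi$ is killed by $3$), and then $c(\phi)=1/|A[\phi](F)|$ with the denominator read off from the Galois action. One harmless slip in your closing remark: when $F=\R$ and $A[\phi](\R)=0$ it is $K_0=\R[t]/(t^2-D)$ that equals $\C$, whereas the mirror algebra $K=\R[t]/(t^2-\hat D)$ with $\hat D=-3D>0$ is $\R\times\R$; the conclusion stands since $(K^*/K^{*3})_{N=1}$ is trivial for every archimedean quadratic algebra.
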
 

\subsection{Selmer ratios over $p$-adic fields}
Let $p$ be a prime and suppose $F$ is a finite extension of $\Q_p$.  Let $\O_F$ be the ring of integers, $\pi$ a uniformizer, and $q$ the size of the residue field $\O_F/\pi$.    We write $v$ for the valuation on $F$, normalized so that $v(\pi) = 1$, and $| \cdot |$ for the absolute value on $F$, normalized so that $|\pi | = q^{-1}$.  
Our goal is to compute the value of $c(\phi)$ in terms of local invariants of $A$ and $A'$, and to study the behavior of $c(\phi)$ under twisting.  

For general abelian varieties (especially those of dimension greater than 1), the following formula is essentially all we can currently say about $c(\phi)$.

\begin{proposition}\label{lem:tamag} 
Suppose $F$ is a finite extension of $\Q_p$ for some $p$.  Then 
\[
c(\phi) = \dfrac{c(A^\prime)}{c(A)} \cdot |\phi'(0)|^{-1}
\]
where
$c(A) = A(F)/A_0(F)$ is the local Tamagawa number and $|\phi'(0)|$ is the normalized absolute value of the determinant of the Jacobian matrix of partial derivatives of $\phi$ evaluated at the origin. 
\end{proposition}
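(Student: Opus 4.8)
\textbf{Proof proposal for Proposition \ref{lem:tamag}.}

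The plan is to compute the Selmer ratio $c(\phi) = |\coker(\phi\colon A(F)\to A'(F))|/|\ker(\phi\colon A(F)\to A'(F))|$ by comparing the Haar measures of the relevant $p$-adic Lie groups. First I would recall that for an abelian variety $B$ over the local field $F$, one has $B(F) = B_0(F) \sqcup (\text{cosets})$ with $c(B) = [B(F):B_0(F)]$, and that $B_0(F)$ is an open subgroup of finite index; moreover $B_0(F)$ carries a canonical $\O_F$-valued invariant differential coming from a N\'eron model, which determines a Haar measure whose total mass is (up to the known normalization) independent of choices. The isogeny $\phi\colon A\to A'$ induces a map $A(F)\to A'(F)$ which, on the identity components, is an open map of $p$-adic Lie groups whose effect on volume is governed by the local factor $|\det d\phi_0| = |\phi'(0)|$, where $d\phi_0$ is the induced map on tangent spaces at the origin (equivalently the pullback on invariant differentials, up to the comparison of integral differentials on the two N\'eron models).

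The key steps, in order, would be: (1) Observe $\phi$ has finite kernel and is surjective on $\bar F$-points, so the snake lemma applied to $0\to A[\phi]\to A\to A'\to 0$ over $F$ gives the identity
\[
\frac{|A'(F)/\phi A(F)|}{|A[\phi](F)|} = \frac{\mathrm{vol}(A'(F))}{\mathrm{vol}(A(F))}\cdot\frac{1}{|\det d\phi_0|}
\]
for compatible Haar measures, where the ratio of volumes is interpreted via the index of the open image $\phi(A(F))$ in $A'(F)$ together with the local modulus of $\phi$ on the identity components. (2) Compute $\mathrm{vol}(A(F)) = c(A)\cdot \mathrm{vol}(A_0(F))$ and likewise for $A'$, and note that with the measures normalized by the N\'eron differentials $\mathrm{vol}(A_0(F))$ and $\mathrm{vol}(A_0'(F))$ differ only by the discrepancy between $\phi^*\omega_{A'}$ and $\omega_A$ as integral differentials; this discrepancy is exactly $|\phi'(0)|$ once one checks that $\phi$ extends to the N\'eron models (which follows from the N\'eron mapping property). (3) Assemble: the two appearances of the local modulus must be reconciled so that the net factor is $|\phi'(0)|^{-1}$, yielding $c(\phi) = (c(A')/c(A))\,|\phi'(0)|^{-1}$.

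The main obstacle I anticipate is step (2): carefully matching up the normalizations so that the factor of $|\phi'(0)|$ appears with the correct sign of the exponent and is not double-counted. One must be precise about whether one measures the identity component with its own N\'eron differential (in which case $\mathrm{vol}(A_0(F))$ and $\mathrm{vol}(A_0'(F))$ are both ``$1$'' in the appropriate sense and the modulus of $\phi$ contributes directly) or pushes forward a single measure along $\phi$ (in which case the modulus enters through the Jacobian). The cleanest route is probably to fix invariant top-degree differentials $\omega_A$, $\omega_{A'}$ spanning the $\O_F$-lattices of N\'eron differentials, write $\phi^*\omega_{A'} = \lambda\,\omega_A$ with $\lambda\in\O_F$, identify $|\lambda| = |\phi'(0)|$ via the tangent-space description, and then use the standard fact (cf.\ the computation underlying Tamagawa measures, e.g.\ as in work of Bloch--Kato or Schaefer) that pushing forward the $\omega_A$-measure on $A_0(F)$ along $\phi$ gives $|\lambda|$ times the $\omega_{A'}$-measure on the open image $\phi(A_0(F))\subseteq A_0'(F)$. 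Combining this with the index computation on component groups then gives the formula. Everything else—finiteness of the cokernel, openness of the image, and the behavior of the snake-lemma exact sequence—is routine $p$-adic analysis.
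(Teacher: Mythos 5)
The paper itself disposes of this proposition with a one-line citation to Schaefer's Lemma 3.8, so the substance to measure your plan against is the standard local computation underlying that lemma. Your general strategy (Néron models, Haar measures from Néron differentials, the modulus of $\phi$) is the right genre of argument, and your step (1) is correct as stated: writing $\phi^*\omega_{A'}=\lambda\,\omega_A$ for Néron differentials, one gets $\#\coker/\#\ker=\dfrac{\mu_{\omega_{A'}}(A'(F))}{\mu_{\omega_A}(A(F))}\cdot|\lambda|^{-1}$ with $|\lambda|=|\phi'(0)|$.

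However, step (2) contains a genuine error, not just a normalization to be "reconciled." You claim that $\mathrm{vol}(A_0(F))$ and $\mathrm{vol}(A_0'(F))$, each measured by its own Néron differential, differ by exactly $|\phi'(0)|$. In fact they are \emph{equal}: each equals $\#\widetilde{A}^0(k)\,q^{-\dim A}$ resp.\ $\#\widetilde{A}'^0(k)\,q^{-\dim A}$, and these point counts agree because the semiabelian parts of the special fibres of the two Néron models are isogenous and $H^1$ of a connected commutative group over a finite field vanishes (Lang's theorem). A concrete counterexample to your claim is $\phi=[p]$ on an elliptic curve over $\Q_p$: there $A=A'$, so the two volumes trivially coincide, while $|\phi'(0)|=1/p\neq 1$. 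If one follows your steps (1) and (2) literally, the factor $|\phi'(0)|$ enters once with each sign and cancels, giving $c(\phi)=c(A')/c(A)$ — which is false (for $[p]$ on a curve with good reduction one has $c(A')/c(A)=1$ but $c(\phi)=p$). So the resolution deferred to step (3) is not bookkeeping: the entire factor $|\phi'(0)|^{-1}$ must come from the modulus in step (1), and the missing ingredient is precisely the equality $\#\widetilde{A}^0(k)=\#\widetilde{A}'^0(k)$, equivalently $\mu_{\omega_A}(A_0(F))=\mu_{\omega_{A'}}(A_0'(F))$. This is the content that Schaefer's proof supplies, via the filtration $A_1(F)\subset A_0(F)\subset A(F)$, multiplicativity of $\#\coker/\#\ker$ in exact sequences, Lang's theorem on the special fibre, and the formal-group computation giving $|\phi'(0)|^{-1}$ on $A_1(F)$; without it (and with your step (2) as written) the argument does not close.
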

\begin{proof}
This is \cite[Lem.\ 3.8]{Schaefer}.
\end{proof}

\begin{corollary}
\label{cor:tamag}
If $p \ne 3$, then $c(\phi) = c(A^\prime)/c(A)$.  
\end{corollary}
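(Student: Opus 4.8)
The plan is to derive Corollary \ref{cor:tamag} directly from Proposition \ref{lem:tamag} by showing that the factor $|\phi'(0)|^{-1}$ is trivial whenever the residue characteristic $p$ is different from $3$. Since $\phi \colon A \to A'$ is an isogeny of degree $3$, the key observation is that the Jacobian matrix of $\phi$ at the origin, i.e., the induced map on tangent spaces $d\phi_0 \colon T_0 A \to T_0 A'$, has determinant whose valuation is controlled by the degree of $\phi$ — more precisely, $v(\det d\phi_0)$ contributes to the different/discriminant of the isogeny, and for a degree-$3$ (hence tamely ramified when $p \neq 3$) isogeny this is a unit.

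First I would recall that for an isogeny $\phi$ of abelian varieties over a local field with ring of integers $\O_F$, after spreading out to the N\'eron models (or at least to smooth models over $\O_F$, which exist after a finite base change, but in fact the relevant computation can be done on formal groups), the cokernel of $d\phi_0$ as a map of free $\O_F$-modules has length equal to $v$ of the determinant, and this length is $0$ exactly when $\phi$ is \'etale in a neighborhood of the origin on the special fiber. The degree of $\phi$ factors as the product of the inseparable degree (a power of $p$) and the separable degree; since $\deg \phi = 3$ and $p \neq 3$, the isogeny is separable on the special fiber, so $d\phi_0$ is an isomorphism mod $\pi$, giving $v(\det d\phi_0) = 0$ and hence $|\phi'(0)| = 1$. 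Substituting into Proposition \ref{lem:tamag} yields $c(\phi) = c(A')/c(A)$.

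Alternatively, and perhaps more cleanly for the write-up, I would invoke the chain rule and the dual isogeny: $\phi'(0) \cdot \hat\phi'(0)$ equals the derivative at $0$ of $\hat\phi \circ \phi = [3]$ on $A$, whose Jacobian is multiplication by $3$ on the tangent space, so $|\phi'(0)| \cdot |\hat\phi'(0)| = |3|^{\dim A}$. When $p \neq 3$ we have $|3| = 1$, so $|\phi'(0)| \cdot |\hat\phi'(0)| = 1$; since both factors are $\leq 1$ (they are determinants of matrices with entries in $\O_F$, as $\phi$ and $\hat\phi$ extend to the formal groups over $\O_F$), each must equal $1$. This gives $|\phi'(0)|^{-1} = 1$ and the corollary follows.

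The only real obstacle is making precise the claim that $\phi'(0)$ — a priori the derivative of a rational map — has $|\phi'(0)| \leq 1$, i.e., that $\phi$ induces a morphism on the formal groups / tangent spaces over $\O_F$ with coefficients in $\O_F$. This is standard: an isogeny of abelian varieties over $F$ induces a homomorphism of the associated formal groups over $\O_F$ (the formal group of the N\'eron model, or equivalently the completion along the identity section of any smooth $\O_F$-model), so its derivative at the origin is an $\O_F$-linear endomorphism of a free $\O_F$-module and hence has determinant in $\O_F$. Given that, the argument above is immediate, so I expect the proof in the paper to be a one-line deduction from Proposition \ref{lem:tamag} via the observation that $|\phi'(0)| = 1$ when $p \nmid \deg \phi = 3$.
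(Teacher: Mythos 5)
Your proof is correct and follows essentially the same route as the paper: both deduce the corollary from Proposition \ref{lem:tamag} by observing that $|\phi'(0)| = 1$ when $p \neq 3$, a fact the paper simply cites from Schaefer (p.~92) and you prove directly via separability of a degree-$3$ isogeny away from $3$ (or via the factorization of $[3]$ through $\phi$). One small point of hygiene: for abelian varieties of dimension greater than one the dual isogeny $\hat\phi$ maps between the dual abelian varieties, so in your second argument you should instead use the isogeny $\psi \colon A' \to A$ with $\psi \circ \phi = [3]$, which exists because $\ker\phi \subset A[3]$; the rest of the argument is unchanged.
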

\begin{proof}
We have $|\phi'(0)| = 1$ in this case; see \cite[pp.\ 92]{Schaefer}.
\end{proof}

If $A = E$ is an elliptic curve, then Proposition \ref{lem:tamag} leads to the following computation of $c(\phi)$ in the case $p \neq 3$.

\begin{proposition}\label{localratio}
Suppose $p \neq 3$ and $\phi \colon E \to E'$ is a $3$-isogeny of elliptic curves.  Then $c(\phi) = 1$ unless either
\begin{enumerate}[$($a$)$]
\item $E$ has split multiplicative reduction, or 
\item $F$ does not contain a primitive third root of unity and $E$ has reduction type $\IV$ or $\IVS$.   
\end{enumerate}
In case $(a)$, we have $c(\phi) = v(j(E'))/v(j(E))$, where $j$ is the $j$-invariant.

\noindent 
In case $(b)$, we have $c(\phi) = 3$ if $E[3](F) = 0$, and $c(\phi) = 1/3$ otherwise.  
\end{proposition}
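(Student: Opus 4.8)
The plan is to reduce everything to Corollary \ref{cor:tamag}, which already tells us that $c(\phi) = c(E')/c(E)$ when $p \neq 3$, so the entire problem becomes a case analysis of how the Tamagawa numbers $c(E)$ and $c(E')$ behave under a $3$-isogeny. First I would recall the list of Kodaira types together with their Tamagawa numbers, and observe that $c(E)$ is divisible by $3$ only in the following situations: split multiplicative reduction $\In{n}$ with $3 \mid n$ (giving $c(E) = n$, but only the $3$-part matters for the ratio since $c(\phi) \in 3^{\Z}$), additive type $\IV$ or $\IVS$ when a primitive cube root of unity is present (giving $c(E) = 3$), and $\IZS$ in certain cases — but one checks $\IZS$ cannot produce a factor of $3$ in the ratio across a $3$-isogeny, and similarly types $\II,\III,\IIS,\IIIS$ always have $c \in \{1,2\}$. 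Since $c(\phi)$ is a power of $3$, only the $3$-parts of $c(E)$ and $c(E')$ contribute, so $c(\phi) = 1$ unless at least one of $E$, $E'$ has a Tamagawa number divisible by $3$.

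Next I would treat the multiplicative case. A $3$-isogeny between curves with multiplicative reduction must be "multiplicative" in the Tate-curve sense: if $E$ has split multiplicative reduction with $v(j(E)) = -n < 0$, then via the Tate parametrization $E \cong \mathbb{G}_m/q^{\Z}$ with $v(q) = n$, and the $3$-isogeny corresponds either to $q \mapsto q^3$ (so $v(j(E')) = -3n$) or to the quotient by $\mu_3$ (so $v(j(E')) = -n/3$, when $3 \mid n$). In either case $c(E) = n$, $c(E') = 3n$ or $n/3$, and so $c(\phi) = c(E')/c(E) = v(j(E'))/v(j(E))$, which is what is claimed in case (a). (If the reduction is non-split multiplicative, the Tamagawa number is $1$ or $2$ and contributes no factor of $3$; the isogeny is defined over $F$ and the same holds for $E'$, so $c(\phi) = 1$.) For the potentially-good additive cases, I would use that over $p \neq 3$ the only types with $3 \mid c$ are $\IV$ and $\IVS$, with $c = 3$ exactly when $\zeta_3 \in F$ and $c = 1$ otherwise; a $3$-isogeny preserves the property "potentially good, potential multiplicative part trivial" and in fact $E$ and $E'$ acquire good reduction over the same extension, so if $E$ has type $\IV$ or $\IVS$ then so does $E'$ (possibly swapped), hence when $\zeta_3 \notin F$ we get $c(E) = c(E') = 1$ and $c(\phi) = 1$, while when $\zeta_3 \in F$ we get $c(E) = c(E') = 3$ and again $c(\phi) = 1$ — so the additive case contributes nothing, consistent with the statement that case (b) only arises when $\zeta_3 \notin F$.

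For case (b) itself — $\zeta_3 \notin F$, $E$ of type $\IV$ or $\IVS$ — the point is that the claimed value of $c(\phi)$ does \emph{not} come from the Tamagawa ratio (which I just argued is $1$) but is forced instead by the identity $c(\phi) c(\hat\phi) = 1$ from Cassels' formula \eqref{cassels formula} together with knowing $|\Sel_\phi|$, $|\Sel_{\hat\phi}|$, and the rational points of order $3$. Here is where I expect the main obstacle: I need to directly compute $|E'(F)/\phi(E(F))|$ and $|E[\phi](F)|$, and I would do this using the parametrization of Theorem \ref{soluble}, namely that $E'(F)/\phi(E(F))$ is in bijection with $\SL_2(F)$-orbits of $\phi$-soluble binary cubic forms of discriminant $4\hat D s$. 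Since $\zeta_3 \notin F$ and the relevant $\hat D$ (or $D$) is a unit but not a square — because the action on $E[\phi]$ is through the unramified quadratic extension, which is where the type $\IV$/$\IVS$ Galois action lives — the group $(K^*/K^{*3})_{N=1}$ and its image under the Kummer map can be computed from $K = F(\zeta_3)$: one finds $|E'(F)/\phi(E(F))| = 1$ when $E[\phi](F) = 0$ (so $E[3](F) = 0$) and $= 3$ when $E[\phi](F) \cong \Z/3$ (so $E[3](F) \neq 0$ since the mirror forces one of $\phi$, $\hat\phi$ to have rational kernel), in each case reading off $c(\phi) = |E'(F)/\phi(E(F))| / |E[\phi](F)|$. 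This yields $c(\phi) = 3$ when $E[3](F) = 0$ and $c(\phi) = 1/3$ otherwise, completing case (b). The delicate bookkeeping will be in making the $\IV$-versus-$\IVS$ and $\phi$-versus-$\hat\phi$ distinctions precise and checking they are consistent with Cassels' formula; one could alternatively cite the known local computations of $c(\phi)$ for $3$-isogenies (e.g.\ from the literature on descent via $3$-isogeny) to cross-check the final table.
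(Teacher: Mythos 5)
Your overall plan --- reduce to the Tamagawa ratio via Corollary \ref{cor:tamag} and run through the Kodaira types --- is a legitimate alternative to the paper's proof, which simply reads the values off \cite[Table 1]{DD}. Your Tate-curve treatment of the split multiplicative case gives the right conclusion (though the labels are swapped: the quotient by $\mu_3$ is the cube map on the Tate parameter, so it triples $-v(j)$, while the quotient by $\langle q^{1/3}\rangle$ divides it by $3$), and eliminating the types other than $\In{n}$, $\IV$, $\IVS$ by comparing $3$-parts is fine.

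The additive potentially good case, however, contains a genuine error that turns into an internal contradiction. The claim that for type $\IV$ or $\IVS$ one has $c(E)=3$ exactly when $\zeta_3\in F$ is false: since $3\neq p$ there is a Frobenius-equivariant isomorphism between the component group $\Phi(\bar k)\simeq \Z/3\Z$ and $E[3]^{I}$, and $E[3]^{I}=E[\phi](\bar F)$, so in fact $c(E)=|E[\phi](F)|$ and likewise $c(E')=|E'[\hat\phi](F)|$; for example $y^2=x^3+up^2$ over $\Q_p$ with $p\equiv 1\pmod 3$ and $u$ a nonsquare unit has type $\IV$ and $\zeta_3\in\Q_p$ but $c(E)=1$. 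Because of this false claim you deduce that the Tamagawa ratio is $1$ whenever the type is $\IV$ or $\IVS$, which contradicts the proposition you are proving (in case (b) the ratio is $3^{\pm 1}$), and your attempted repair of case (b) ``not from the Tamagawa ratio'' is not available: Corollary \ref{cor:tamag} asserts $c(\phi)=c(E')/c(E)$ unconditionally for $p\neq 3$, so any direct computation of kernel and cokernel must reproduce that ratio. Moreover the sizes you propose (cokernel of order $1$ when $E[\phi](F)=0$ and of order $3$ when $E[\phi](F)\simeq\Z/3\Z$) would give $c(\phi)=1$ in both cases, not $3$ and $1/3$. The correct route along your lines is: the characters $\chi$ and $\chi\omega$ through which $G_F$ acts on $E[\phi]$ and $E'[\hat\phi]$ (with $\omega$ cutting out $F(\zeta_3)$) are unramified, because $E[3]^{I}$ has order $3$ and must equal $E[\phi]$; hence $E[3](F)=E[\phi](F)$, and if $\zeta_3\in F$ the two characters coincide, giving $c(E)=c(E')$ and $c(\phi)=1$, while if $\zeta_3\notin F$ then $\omega$ is the unique unramified quadratic character, $\chi\in\{1,\omega\}$, exactly one of $E[\phi](F)$, $E'[\hat\phi](F)$ is nonzero, and $c(\phi)=c(E')/c(E)$ is $3$ when $E[3](F)=0$ and $1/3$ otherwise. (One must also rule out, or handle, the possibility at $p=2$ that $E'$ has a different reduction type; this is exactly the bookkeeping the cited table of Dokchitser--Dokchitser takes care of.)
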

\begin{proof}
This follows from \cite[Table 1]{DD}.  
\end{proof}

The computation of $c(\phi)$ in the case $p = 3$ is more involved because $c(E')/c(E)$ is difficult to analyze and we also  need to take into account the term $|\phi'(0)|^{-1}$ in Proposition \ref{lem:tamag}.  Since $A = E$ is now an elliptic curve, we may write $|\phi'(0)| = |\frac{\phi^* \omega^\prime}{\omega}|$, where $\omega$ and $\omega^\prime$ are N\'eron differentials on minimal models for $E$ and $E'$, respectively.  This ratio of differentials is difficult to describe purely in terms of the reduction type and other invariants of the minimal model.  The case of (potentially) supersingular reduction is especially difficult, and for those curves we still have an incomplete picture.   The next theorem collects what information we do know about these 3-adic invariants. We set $\alpha_{\phi,F} := |\phi'(0)|^{-1}$.  

\begin{theorem}\label{3adic}
Let $F$ be a finite extension of $\Q_3$ and $\phi \colon E \to E'$ a $3$-isogeny of elliptic curves over~$F$.  Then $c(\phi) = \frac{c(E')}{c(E)} \cdot \alpha_{\phi, F}$, and both factors are integer powers of $3$ satisfying $\frac 13 \leq c(E')/c(E) \leq 3$ and $1 \leq \alpha_{\phi, F} \leq 3^{[F \colon \Q_p]}$.  

The values of $c(E')/c(E)$ and $\alpha_{\phi,F}$ are as in Tables $\ref{tab:value of c}$ and $\ref{tab:values of alpha}$.  Here, we write $j$ and $j'$ for $j(E)$ and $j(E')$, $\Delta_\minn{}$ and $\Delta'_\minn{}$ for the minimal discriminants of $E$ and $E'$, and $d = [F\colon \Q_p]$.  We let $K = F(\sqrt{D})$ be the field over which the points of $E[\phi]$ are defined, and $\m_K$ the maximal ideal in~$\O_K$.  We also write $\hat E$ for the formal group of $E$.      
\end{theorem}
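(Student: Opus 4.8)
The first assertion is immediate from Proposition \ref{lem:tamag}, which gives $c(\phi) = \frac{c(E')}{c(E)}\,|\phi'(0)|^{-1}$, together with the definition $\alpha_{\phi,F} := |\phi'(0)|^{-1}$. Since Corollary \ref{cor:tamag} already disposes of the case $p\ne 3$, I would henceforth assume $p = 3$, so that $F/\Q_3$ and $d = [F\colon\Q_3] = ef$, with $e$ the ramification index and $f$ the residue degree.

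To see that $\alpha_{\phi,F}$ is a power of $3$ lying in $[1,3^d]$, the plan is to argue with N\'eron differentials. Let $\omega,\omega'$ be N\'eron differentials on the minimal models of $E,E'$. By the N\'eron mapping property $\phi$ extends to a morphism of N\'eron models, so $\phi^*\omega' = c\,\omega$ for some $c\in\O_F$, and then $|\phi'(0)| = |c|$. Since the residue field has size $q = 3^f$, the normalized absolute value of a nonzero element of $\O_F$ is a nonnegative power of $q^{-1}$, whence $\alpha_{\phi,F} = |c|^{-1} = 3^{fv(c)}$ is a power of $3$ that is at least $1$. Dually, $\hat\phi^*\omega = c'\omega'$ with $c'\in\O_F$; applying $\phi^*$ and using $\hat\phi\circ\phi = [3]$ together with $[3]^*\omega = 3\omega$ yields $cc' = 3$ up to a unit, so $0\le v(c)\le v(3) = e$ and hence $1\le\alpha_{\phi,F}\le 3^{ef} = 3^{[F\colon\Q_3]}$. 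For the factor $c(E')/c(E)$, I would first observe that $c(\phi)\in 3^\Z$: the kernel of $\phi\colon E(F)\to E'(F)$ is $E[\phi](F)$, of order $1$ or $3$, and the cokernel is finite and killed by $3$ (since $3E'(F) = \phi(\hat\phi(E'(F)))\subseteq\phi(E(F))$), hence an $\F_3$-vector space. Then $c(E')/c(E) = c(\phi)/\alpha_{\phi,F}$ is a ratio of positive integers equal to a power of $3$, so it lies in $3^\Z$; and the bound $\frac13\le c(E')/c(E)\le 3$ follows from this together with the behavior of Tamagawa numbers under an $\ell$-isogeny recorded in \cite{DD} (concretely: the ratio is $1$ in the good reduction case, a ratio of integers each at most $4$ in the additive case, and $v(j(E'))/v(j(E))\in\{1/3,3\}$ in the multiplicative case, via the Tate curve description of $3$-isogenies).

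What remains is to establish the precise table entries. For $c(E')/c(E)$ (Table \ref{tab:value of c}) the plan is to read off the effect of a $3$-isogeny on the Kodaira type, the component group, and $v(\Delta_{\minn})$ from \cite{DD}. The entries for $\alpha_{\phi,F}$ (Table \ref{tab:values of alpha}) are the genuinely new and hardest part: I would compute the ratio $\phi^*\omega'/\omega$ explicitly from the models \eqref{model} and \eqref{Eprime} (equivalently, from the isogeny formulas of \cite{cohenpazuki}), and then run Tate's algorithm on $E$ and $E'$ to express $v(c)$ in terms of $v(\Delta_{\minn})$, $v(\Delta'_{\minn})$, the splitting field $K = F(\sqrt D)$, and the induced map on the formal group $\hat E$. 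The main obstacle I expect is the case of potentially supersingular reduction: minimization of Weierstrass models in residue characteristic $3$ is delicate there, and the relation between $v(c)$ and the invariants of the minimal models is only partially understood — this is why Table \ref{tab:values of alpha} must be left incomplete in those cases, the only uniform statement being the bound $1\le\alpha_{\phi,F}\le 3^{[F\colon\Q_3]}$ established above.
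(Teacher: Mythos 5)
The first part of your proposal is correct and is essentially the paper's argument: the identity $c(\phi)=\frac{c(E')}{c(E)}\alpha_{\phi,F}$ is Proposition \ref{lem:tamag} plus the definition of $\alpha_{\phi,F}$, the integrality of $\phi^*\omega'/\omega$ (which you get from the N\'eron mapping property, the paper from the induced map on formal groups via \cite{DD}) gives $\alpha_{\phi,F}\ge 1$, and the duality relation (your $cc'=3$ up to a unit, the paper's $\alpha_{\phi,F}\alpha_{\hat\phi,F}=|3|^{-1}=3^d$) gives $\alpha_{\phi,F}\le 3^d$; the constraint $c(E')/c(E)\in\{1/3,1,3\}$ is likewise fine (the paper attributes it to Tate's algorithm).

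The genuine gap is in the second half, i.e.\ the actual table entries, which are part of the statement. Most seriously, Table \ref{tab:values of alpha} asserts a \emph{precise} value of $\alpha_{\phi,F}$ in the potentially supersingular case when $F/\Q_3$ is unramified: $\alpha_{\phi,F}=1$ if $v(\Delta_\minn)<v(\Delta_\minn')$ and $\alpha_{\phi,F}=3^d$ if $v(\Delta_\minn)>v(\Delta_\minn')$. Your proposal not only omits a proof of this but asserts the contrary, namely that the table ``must be left incomplete'' in all potentially supersingular cases; in the theorem only the \emph{ramified} supersingular case is left with just the bound $1\le\alpha_{\phi,F}\le 3^d$. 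The unramified case is a nontrivial result of Klagsbrun--Gealy \cite{kg}, and no argument in your proposal reaches it. For the remaining entries (the ordinary criterion $E[\phi](K)\subset\hat E(\m_K)$ versus $E[\phi](K)\not\subset\hat E(\m_K)$, the multiplicative criterion $v(j)=3v(j')$ versus $3v(j)=v(j')$, and the entries of Table \ref{tab:value of c}), the paper reads them off from \cite[Table 1]{DD}; your plan to rederive them by computing $\phi^*\omega'/\omega$ from the models \eqref{model}--\eqref{Eprime} and running Tate's algorithm is only a sketch, and as stated it would not obviously produce the formal-group criterion in the ordinary case (which is about where the kernel sits relative to $\hat E(\m_K)$, not about a comparison of minimal discriminants). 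So the bounds are proved, but the tabulated values --- which the theorem claims --- are not.
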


\begin{table}

\begin{center}
\begin{tabular}{@{\vrule width 1.2pt\ }l |@{\vrule width 1.2pt}  c |@{\vrule width 1.2pt}}
\noalign{\hrule height 1.2pt}

\vphantom{$\displaystyle{\int^\int}$}
{Reduction type of $E$}&$c(E^\prime)/c(E)$\\ [4pt]

\noalign{\hrule height 1.2pt}

\vphantom{$\int^{\int^\int}$}%
(potentially) ordinary  &  $1$\\ [4pt]

\hline 

\vphantom{$\int^{a^{\int^a}}$} 
split multiplicative  & $v(j')/v(j)$\\ [4pt]

\hline

\vphantom{$\int^{a^{\int^a}}$} 
potentially or nonsplit multiplicative &    $1$ \\[4pt]

\hline
\vphantom{$\int^{a^{\int^a}}$}

(potentially) supersingular  & $\frac13, 1, \mbox{or } 3$    \\ [4pt]
  
\noalign{\hrule height 1.2pt}

\noalign{\hrule height 1.2pt\phantom{$a_a$}}
\end{tabular}
\end{center}
\caption{The value of $c(E^\prime)/c(E)$ when $F$ is a $3$-adic field.}
\label{tab:value of c}

\end{table}

\begin{table}

\begin{center}
\begin{tabular}{@{\vrule width 1.2pt\ }l |@{\vrule width 1.2pt}  c |@{\vrule width 1.2pt}}
\noalign{\hrule height 1.2pt}

\vphantom{$\displaystyle{\int^\int}$}
{Reduction type of $E$}&$\alpha_{\phi, F}$\\ [4pt]

\noalign{\hrule height 1.2pt}

\vphantom{$\int^{\int^\int}$}%
(potentially) ordinary  & \\ [4pt]
  $\quad$ $E[\phi](K) \not\subset \hat E(\m_K)$ & $1$   \\ [4pt] 
  $\quad$ $E[\phi](K) \subset \hat E(\m_K)$ & $3^d$ \\ [4pt]

\hline 

\vphantom{$\int^{a^{\int^a}}$} 
(potentially) multiplicative  & \\ [4pt]
  $\quad$ $v(j)\!=\!3v(j')$ & $1$   \\ [4pt] 
  $\quad$ $3v(j)\!=\!v(j')$ & $3^d$ \\ [4pt]

\hline

\vphantom{$\int^{a^{\int^a}}$} 
potentially supersingular, $F$ unramified &     \cr
$\quad$ $v(\Delta_\minn) < v(\Delta_\minn^\prime)$ & $1$  \\[4pt]
   $\quad$ $v(\Delta_\minn) > v(\Delta_\minn^\prime)$ & $3^d$ \\[4pt]

\hline
\vphantom{$\int^{a^{\int^a}}$}

(potentially) supersingular, $F$ ramified & $\,\,  1 \leq \alpha_{\phi,F} \leq 3^d$    \\ [4pt]
  
\noalign{\hrule height 1.2pt}

\noalign{\hrule height 1.2pt\phantom{$a_a$}}
\end{tabular}
\end{center}
\caption{The value of $\alpha_{\phi,F}$ when $F$ is a 3-adic field.}
\label{tab:values of alpha}

\end{table}

\begin{proof}
That $c(E')/c(E)$ lies in the set $\{1/3, 1,3\}$ follows from Tate's algorithm.  The quantity $\frac{\phi^* \omega^\prime}{\omega}$ is, up to a unit, the leading term in the power series giving the induced isogeny on formal groups $\hat E \to \hat E'$, and is therefore integral over $\Q_3$ \cite[4.2]{DD}, so that $\alpha_{\phi,F} \geq 1$.  Since $\alpha_{\phi,F} \alpha_{\hat\phi,F} = |3|^{-1} = 3^d$, we must have $\alpha_{\phi,F} \leq 3^d$ as well.   

Most of the entries in the two tables can be found in \cite[Table 1]{DD}.  The value of $\alpha_{\phi,F}$ in the case of potentially supersingular reduction over an unramified field $F$ is a result of the second author and Gealy \cite{kg}.    
\end{proof}

We are also interested in the behavior of the Selmer ratio $c(\phi)$ under twisting.  By Proposition \ref{localratio}, we will therefore want to know how the Kodaira type of $E$ changes under twisting.  If $p > 2$, then there are four classes in $F^*/F^{*2}$, and a set of representatives is $\{1, u, \pi, \pi u\}$, where $u$ is a non-square unit in $\O_F$.  The Kodaira type depends only on the base change of $E$ to the maximal unramified extension of $F$, so $E$ and $E_u$ have the same type, and $E_\pi$ and $E_{\pi u}$ have the same type as well.  For the twist by $\pi$, we have the following result.   

\begin{proposition}
\label{prop:Kodrel}
For $p \ge 3$, the relationship between the Kodaira types of $E$ and $E_\pi$ is given by Table $\ref{tab:twisted Kodaira symbols}$.
\end{proposition}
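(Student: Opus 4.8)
The plan is to reduce the statement to a case analysis over the finitely many Kodaira types and work one reduction type at a time, exploiting that the quadratic twist by a uniformizer $\pi$ only changes the curve after the ramified quadratic extension $F(\sqrt\pi)/F$. First I would recall, via Tate's algorithm, how the Kodaira type behaves under a ramified quadratic base change: if $E$ has good reduction then $E_\pi$ acquires additive reduction of a type dictated by $v(\Delta)$ and $v(c_4)$ modulo appropriate integers (for $p\ge 5$ this gives the usual pattern $\IZ\leftrightarrow\IZS$), and symmetrically for the potentially good additive types one twists back toward the ``Galois-conjugate'' type. For multiplicative reduction one uses that $E_\pi$ is the quadratic twist, so $\In{n}$ (split or non-split) goes to $\InS{n}$ and conversely, with the split/non-split dichotomy toggled according to whether $\sqrt\pi$ generates the relevant unramified-versus-ramified quadratic extension; since $F(\sqrt\pi)/F$ is ramified, this reproduces the $\In{n}\leftrightarrow\InS{n}$ rows of the table. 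The case $p=3$ is separated out because the reduction of $E_\pi$ at a place above $3$ is not determined solely by $v(\Delta),v(c_4)$ in the naive way, and potentially-supersingular phenomena intervene; there one argues using the explicit models \eqref{model}, \eqref{Eprime} for a $3$-isogenous pair, computing $v(\Delta_{\min})$, $v(c_4)$ for $E_\pi\colon y^2 = x^3 + D\pi(ax+b\pi)^2$ directly and running Tate's algorithm, which handles the finitely many residual subcases.

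The concrete steps, in order, are: (1) reduce to $F$-isomorphism classes of minimal Weierstrass models and record the minimal discriminant valuation and $c_4$-valuation of $E_\pi$ in terms of those of $E$ (using that twisting by $\pi$ multiplies $c_4$ by $\pi^2$ and $c_6$ by $\pi^3$, hence $\Delta$ by $\pi^6$, before re-minimizing); (2) for each of the reduction types $\IZ$, $\In{n}$ (split and non-split, $n\ge 1$), $\II$, $\III$, $\IV$, $\IZS$, $\InS{n}$, $\IVS$, $\IIIS$, $\IIS$, feed the resulting pair $(v(c_4(E_\pi)), v(\Delta_{\min}(E_\pi)))$ into Tate's algorithm / the Kodaira classification to read off the type of $E_\pi$; (3) track the split/non-split distinction in the multiplicative cases by examining the leading coefficient of the singular point's tangent cone over the residue field, noting that twisting by the non-square $\pi$ does not change the residue field but does reinterpret whether the relevant quadratic character is trivial; (4) assemble the results into Table~\ref{tab:twisted Kodaira symbols}. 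For the potentially-good types one observes that $E_\pi$ over $F(\sqrt\pi)$ recovers $E$, so the type of $E_\pi$ is the one whose further ramified quadratic twist returns to the type of $E$ — this pins down e.g. $\II\leftrightarrow\IVS$ or similar pairings once the residue characteristic is $\ge 5$, and the $p=3$ column is filled by the explicit computation.

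I expect the main obstacle to be the residue characteristic $p=3$ case: there the correspondence between $(v(c_4),v(\Delta))$ and the Kodaira type is obstructed by wild ramification in $F(\sqrt\pi)/F$ failing to be tame, so Tate's algorithm must be run with care and the potentially-supersingular versus potentially-ordinary dichotomy matters. The saving grace is that any curve with a $3$-isogeny has a model of the special shape \eqref{model}, which makes $c_4$, $c_6$, $\Delta$ explicit polynomials in $a$, $b$, $D$, so the twist $E_\pi$ has an equally explicit model and the finitely many sub-cases (according to $v(a)$, $v(b)$, $v(D)$ modulo small integers) can be enumerated mechanically. A secondary, more bookkeeping-type difficulty is making sure the split/non-split labels are assigned consistently: this is handled by the uniform observation that a ramified quadratic twist fixes the residue field and therefore preserves ``split vs.\ non-split'' for types that remain multiplicative, and swaps nothing there — the only genuine change is $\In{n}\leftrightarrow\InS{n}$ — whereas for $p=3$ one double-checks against the explicit models.
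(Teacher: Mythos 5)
The paper's own proof is a one-line citation to Comalada's result, so any direct argument is ``a different route.'' Your overall strategy — track how twisting by $\pi$ scales $c_4$, $c_6$, $\Delta$, then re-minimize and feed the resulting valuations into Tate's algorithm, case by Kodaira type — is exactly the standard way to prove Comalada's table, and for $p\ge5$ it goes through cleanly since Ogg's theorem says the pair $(v(c_4),v(\Delta_{\min}))$ determines the type. Two points need correcting, though. First, your explanation of why $p=3$ is delicate is wrong: $F(\sqrt\pi)/F$ has degree $2$, so for residue characteristic $3$ it is \emph{tamely} ramified; wild ramification of the twisting extension is an issue only at $p=2$, which is excluded. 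The genuine obstruction at $p=3$ is different: Ogg's determination of the Kodaira type from $(v(c_4),v(\Delta_{\min}))$ fails for $p\le3$, so after twisting you must actually run Tate's algorithm and check the residual conditions at the branching steps rather than just read off valuations; it is also true that $E$ itself can have wildly ramified potentially-good reduction when $p=3$, but that is a property of $E$, not of $F(\sqrt\pi)/F$. Second, restricting the $p=3$ analysis to the $3$-isogeny model \eqref{model} would prove less than the proposition asserts, which concerns an arbitrary elliptic curve over a $p$-adic field; for the application in the paper this restriction would suffice, but as written the statement is general and the argument should be too (or one simply cites Comalada, as the paper does). Finally, your step (3) on tracking split versus non-split multiplicative reduction is unnecessary here: the Kodaira symbol does not record that distinction, so it plays no role in verifying the table.
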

\begin{proof}
See \cite[Prop.\ 1]{comalada}, for example.
\end{proof}

\begin{table}

\begin{tabular}{@{\vrule width 1.2pt}c@{\ \vrule width 1.2pt\ }c|c|c|c|c| c|c|c|c|c@{\ \vrule width 1.2pt}}
\noalign{\hrule height 1.2pt}

$\quad$ \vphantom{$\int^{\int^\int}$} Kodaira type of $E$ $\quad$ & $\IZ$ & $\In{k}$ & $\II$ & $\III$ & $\IV$ & $\IZS$ & $\InS{k}$ & $\IVS$ & $\IIIS$ & $\IIS$ \\ [4pt] \hline

$\quad$ \vphantom{$\int^{\int^\int}$} Kodaira type of $E_\pi$ $\quad$ & $\IZS$ & $\InS{k}$ & $\IVS$ & $\IIIS$ & $\IIS$ & $\IZ$ & $\In{k}$ & $\II$ & $\III$ & $\IV$ \\ [4pt]
\noalign{\hrule height 1.2pt\phantom{$a_a$}}
\end{tabular}
\caption{The effect of twisting on Kodaira symbols for $p > 2$.}
\label{tab:twisted Kodaira symbols}
\end{table}

Things are more complicated when $p = 2$, but we still have the following result:
\begin{proposition}\label{twist1}
Suppose $p \neq 3$.  Then there exists $s \in F^*$ such that $c(\phi_s) = 1$.  
\end{proposition}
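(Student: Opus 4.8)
The plan is to reduce to the explicit local computation of $c(\phi_s)$ carried out in this section and then argue by cases on the reduction type of $E$. First I would observe that since $p \neq 3$, Corollary \ref{cor:tamag} applies, so $c(\phi_s) = c(E_s')/c(E_s)$ for every $s$, and by Proposition \ref{localratio} the ratio $c(\phi_s)$ equals $1$ unless $E_s$ has split multiplicative reduction, or $F$ lacks a primitive cube root of unity and $E_s$ has reduction type $\IV$ or $\IVS$. So it suffices to produce a single $s \in F^*$ for which $E_s$ avoids all of these ``bad'' reduction types. Recall that quadratic twisting only affects the reduction type through the base change to the maximal unramified extension, so $E$ and $E_u$ have the same type, while $E_\pi$ and $E_{\pi u}$ have the same type; thus there are really only two twist classes of reduction type to consider, that of $E$ and that of $E_\pi$, related by Table \ref{tab:twisted Kodaira symbols} when $p > 2$.

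Next I would dispose of the potentially-good-reduction case. If $E$ has potentially good reduction, then the same is true of every quadratic twist, so no twist is multiplicative; the only obstruction is additive reduction of type $\IV$ or $\IVS$ when $\mu_3 \not\subset F$. Inspecting Table \ref{tab:twisted Kodaira symbols}, twisting by $\pi$ sends $\IV \mapsto \IIS$ and $\IVS \mapsto \II$ (and conversely), so if $E$ has type $\IV$ or $\IVS$ we may take $s = \pi$ and $E_\pi$ will have type $\IIS$ or $\II$, for which $c(\phi_\pi) = 1$; if $E$ already has neither type, take $s = 1$. This settles the case $p > 2$ with potentially good reduction, and also handles $p = 2$ in that subcase since then $\mu_3 \not\subset F$ can still force us into the $\IV/\IVS$ problem, but over $\Q_2$ one checks that a suitable twist still removes these types — here I would invoke the more delicate $p=2$ twisting behavior, or simply note that for $p=2$ the only relevant obstruction, split multiplicative reduction, is handled below, since $\mu_3 \subset \Q_2$ so case (b) of Proposition \ref{localratio} is vacuous.

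Finally, the potentially multiplicative case. If $E$ has potentially multiplicative reduction, then every twist is potentially multiplicative, and the only possible obstruction is honest split multiplicative reduction. But split multiplicative reduction becomes nonsplit multiplicative (or vice versa) after the quadratic twist by the unramified quadratic class $u$: the reduction type $\In{k}$ is preserved but the ``split vs.\ nonsplit'' decoration is governed by whether the tangent directions at the node are rational, which is exactly a quadratic condition trivialized over $F(\sqrt u)$. Hence if $E$ has split multiplicative reduction, then $E_u$ has nonsplit multiplicative reduction and $c(\phi_u) = 1$; if $E$ has nonsplit or additive potentially-multiplicative reduction we take $s=1$. Assembling the three cases — potentially good, potentially multiplicative, and good reduction of $E$ (where $c(\phi) = 1$ already by Proposition \ref{solsgood} or Theorem \ref{integrality}) — yields the claim.

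\emph{Main obstacle.} The delicate point is the case $p = 2$, where Table \ref{tab:twisted Kodaira symbols} is unavailable and the behavior of Kodaira types under quadratic twist is genuinely more complicated; one must either appeal to a $2$-adic analogue of the twisting table or argue directly that split multiplicative reduction can always be twisted away to nonsplit, using that $\mu_3 \subset \Q_2$ kills the additive obstruction of Proposition \ref{localratio}(b). Everything else is a finite, mechanical case check against Proposition \ref{localratio} and the twisting tables.
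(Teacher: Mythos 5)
Your argument for $p>3$ is correct and is exactly the paper's: by Proposition \ref{localratio} the only obstructions are split multiplicative reduction and (when $\mu_3\not\subset F$) types $\IV$/$\IVS$, and these are removed by the unramified twist (split $\to$ nonsplit) and by the ramified twist via Table \ref{tab:twisted Kodaira symbols} ($\IV\mapsto\IIS$, $\IVS\mapsto\II$), respectively.

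The gap is at $p=2$, and it comes from a false fact: $\mu_3\not\subset\Q_2$. Indeed $\mu_3\subset F$ if and only if $-3\in F^{*2}$, and $-3\equiv 5\pmod 8$ is not a square in $\Q_2$; more generally a $2$-adic field contains $\mu_3$ exactly when its residue degree is even. So case (b) of Proposition \ref{localratio} is \emph{not} vacuous for $p=2$ (e.g.\ already for $F=\Q_2$), and your proposed shortcut --- ``only split multiplicative reduction matters at $2$'' --- does not dispose of a curve with type $\IV$ or $\IVS$ reduction at a prime of odd residue degree above $2$. For such a curve you must still produce a quadratic twist whose type is neither $\IV$ nor $\IVS$ (in case (b) the ratio is $3$ or $1/3$, never $1$), and Table \ref{tab:twisted Kodaira symbols} is stated only for $p\ge 3$; the behavior of Kodaira types under quadratic twist in residue characteristic $2$ is genuinely more complicated because $F^*/F^{*2}$ is larger and ramified twists interact with wild ramification. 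This is precisely why the paper's proof invokes the $2$-adic twisting table of Comalada (\cite[Table 1]{comalada}) together with Proposition \ref{localratio} for $p=2$. You do mention appealing to such a $2$-adic analogue as an alternative, but you neither carry it out nor cite it, and your primary resolution of the $p=2$ case rests on the incorrect claim $\mu_3\subset\Q_2$; as written, the proof is incomplete there. The remaining cases (good, nonsplit or potentially multiplicative, and the unramified-twist trick for split multiplicative reduction, which is valid at all $p\neq 3$) are fine.
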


\begin{proof}
For $p > 3$, this follows from Propositions~\ref{localratio} and \ref{prop:Kodrel}.  For $p = 2$, this follows from \cite[Table~1]{comalada} and Proposition~\ref{localratio}.  
\end{proof}

Proposition \ref{twist1} can fail when $p = 3$, but a rather weaker statement along these lines is still true and will be proved as Corollary~\ref{cong} below.  While Corollary~\ref{cong} looks innocuous enough, it is crucial in the proof of Theorem~\ref{totreal}.  We first need a few lemmas.  

\begin{lemma}
\label{lem:prodalpha}
If $p = 3$, then $\alpha_{\phi,F}, \alpha_{\hat\phi,F} \in \Z$ and $\alpha_{\phi,F} \cdot \alpha_{\hat\phi,F} = 3^d$.
\end{lemma}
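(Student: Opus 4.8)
The plan is to observe that this lemma essentially repackages the integrality input already used in the proof of Theorem~\ref{3adic}. Recall that $\alpha_{\phi,F} = |\phi'(0)|^{-1}$, where $\phi'(0) = \phi^*\omega'/\omega$ is the ratio of the N\'eron differentials on minimal models of $E'$ and $E$; equivalently, $\phi'(0)$ is the leading (linear) coefficient of the power series describing the isogeny induced by $\phi$ on formal groups over $\O_F$. The first step is to record that $\phi'(0)$, and likewise $\hat\phi'(0)$, lie in $\O_F$: this is precisely the fact from \cite[4.2]{DD} quoted in the proof of Theorem~\ref{3adic}. Consequently $v(\phi'(0))$ and $v(\hat\phi'(0))$ are non-negative integers, so $\alpha_{\phi,F} = q^{v(\phi'(0))}$ and $\alpha_{\hat\phi,F} = q^{v(\hat\phi'(0))}$; since $q$ is a power of $3$, these are positive integer powers of $3$, and in particular lie in $\Z$.

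For the product formula, the second step is to exploit the identity $\hat\phi\circ\phi = [3]$ on $E$. By functoriality of pullback of differentials, $3\,\omega = [3]^*\omega = (\hat\phi\circ\phi)^*\omega = \phi^*(\hat\phi^*\omega) = \phi'(0)\,\hat\phi'(0)\,\omega$, so $\phi'(0)\,\hat\phi'(0) = 3$ in $\O_F$. Taking normalized absolute values gives $|\phi'(0)|\,|\hat\phi'(0)| = |3| = 3^{-d}$, where $d = [F\colon\Q_3]$, and therefore $\alpha_{\phi,F}\cdot\alpha_{\hat\phi,F} = |3|^{-1} = 3^d$, as desired.

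There is no real obstacle here; the only non-formal ingredient is the integrality over $\O_F$ of the linear coefficient of a formal-group isogeny, which has already been invoked above. If one prefers not to cite \cite{DD} a second time, the same fact follows from the observation that $\phi$ extends to a homomorphism of N\'eron models and hence induces a homomorphism of the associated formal groups over $\O_F$, all of whose coefficients --- in particular the linear one --- are integral. The lemma then follows immediately by combining the two steps.
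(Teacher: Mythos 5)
Your proof is correct and follows essentially the same route as the paper, whose own argument (via Theorem~\ref{3adic} and its proof) rests on the same two ingredients: the integrality of $\phi'(0)$ as the linear coefficient of the induced map on formal groups (\cite[4.2]{DD}), and the identity $\alpha_{\phi,F}\,\alpha_{\hat\phi,F}=|3|^{-1}=3^d$. You merely make explicit the derivation of that identity from $\hat\phi\circ\phi=[3]$ and pullback of invariant differentials, which the paper asserts without comment.
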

\begin{proof}
This follows from Theorem \ref{3adic} and its proof.
\end{proof}

\begin{lemma}\label{lem:euler}
For any prime $p$, we have
\[
\big | H^1(F,E[\phi]) \big | 
= \left \{ \begin{matrix} \big | E(F)[\phi] \big|\cdot\big | E^\prime(F)[\hat\phi] \big| & \text{ if }p \ne 3 \\ 3^d\cdot \big | E(F)[\phi] \big|\cdot \big | E^\prime(F)[\hat\phi] \big| & \text{ if }p = 3 \end{matrix}\right . \;.
\]
\end{lemma}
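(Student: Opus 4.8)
The plan is to compute $|H^1(F,E[\phi])|$ via the local Euler characteristic formula for the Galois module $M = E[\phi]$, which is a finite $\F_3[G_F]$-module of order $3$.

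First I would recall the local Euler characteristic formula: for a finite $G_F$-module $M$ over a $p$-adic field $F$, one has
\[
\frac{|H^0(F,M)| \cdot |H^2(F,M)|}{|H^1(F,M)|} = \frac{1}{|M|_F},
\]
where $|M|_F$ denotes the normalized absolute value of $\#M$ as an element of $F$ (so this factor is $1$ unless $p \mid \#M$). In our case $\#M = 3$, so $|M|_F = 1$ when $p \neq 3$ and $|M|_F = |3|_F = 3^{-d}$ when $p = 3$. This immediately produces the factor of $3^d$ distinguishing the two cases, so the remaining task is purely to identify $|H^0|$ and $|H^2|$ with the two torsion terms on the right-hand side.

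Next I would handle $H^0$ and $H^2$ separately. For $H^0$: by definition $H^0(F,E[\phi]) = E[\phi](F) = E(F)[\phi]$, giving one of the two factors directly. For $H^2$: I would invoke local Tate duality, which gives $H^2(F,E[\phi]) \cong H^0(F, E[\phi]^\vee)^\vee$, where $E[\phi]^\vee = \Hom(E[\phi],\mu_3)$ is the Cartier dual. The key input is that the Weil pairing $E[\phi] \times E'[\hat\phi] \to \mu_3$ is a perfect $G_F$-equivariant pairing, so $E[\phi]^\vee \cong E'[\hat\phi]$ as $G_F$-modules; hence $|H^2(F,E[\phi])| = |H^0(F,E'[\hat\phi])| = |E'(F)[\hat\phi]|$, the other factor. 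Assembling these three identities into the Euler characteristic formula yields the claimed equality.

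**The main obstacle** — and it is a mild one — is making sure the Cartier dual of $E[\phi]$ is correctly identified with $E'[\hat\phi]$ rather than with $E[\phi]$ itself; this is exactly the point where the mirror algebra $K$ (versus $K_0$) enters, and one should cite the compatibility of the Weil pairing with the dual isogeny (e.g. the standard fact that $\ker\hat\phi$ is canonically the Cartier dual of $\ker\phi$). One could alternatively bypass Tate duality entirely and argue purely via the known sizes of $H^0$, $H^1$, $H^2$ using the Selmer-ratio bookkeeping already set up in the paper (Cassels' formula and Proposition~\ref{lem:tamag}), but the Euler characteristic route is cleaner and self-contained. Either way, no serious computation is required; the lemma is essentially a repackaging of local Tate cohomology for a module of order $3$.
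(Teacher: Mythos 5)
Your proposal is correct and is essentially identical to the paper's (one-line) proof: the lemma is exactly the local Euler characteristic formula for $M = E[\phi]$, combined with the fact that $E[\phi]$ and $E'[\hat\phi]$ are Cartier dual, so that local Tate duality gives $|H^2(F,E[\phi])| = |H^0(F,E'[\hat\phi])| = |E'(F)[\hat\phi]|$, while $|H^0(F,E[\phi])| = |E(F)[\phi]|$. Just double-check the normalization of the Euler characteristic — the standard statement is $|H^0|\cdot|H^2|/|H^1| = |\#M|_F$, which equals $3^{-d}$ when $p = 3$ and $1$ otherwise — so that the factor $3^d$ ends up multiplying $|E(F)[\phi]|\cdot|E'(F)[\hat\phi]|$ as in the lemma, which is indeed the conclusion you draw.
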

\begin{proof}
This is the local Euler characteristic formula, as $E[\phi]$ and $E^\prime[\hat\phi]$ are dual to each other.  
\end{proof}

\begin{corollary}
\label{cor:prodcokers}
For any prime $p$, we have
\[
\big | \coker E(F) \xrightarrow{\phi} E^\prime(F) \big |\cdot \big |  \coker E^\prime(F) \xrightarrow{\hat \phi} E(F) \big | =  \big | H^1(F,E[\phi]) \big |.
\]
\end{corollary}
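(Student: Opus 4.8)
The plan is to reduce this identity to the local Euler characteristic formula of Lemma~\ref{lem:euler}. Since $\ker\big(\phi\colon E(F)\to E'(F)\big) = E(F)\cap\ker\phi = E(F)[\phi]$ and likewise $\ker\big(\hat\phi\colon E'(F)\to E(F)\big) = E'(F)[\hat\phi]$, the definition of the local Selmer ratio gives
\[
c(\phi)\,c(\hat\phi) \;=\; \frac{\big|\coker\,\phi\colon E(F)\to E'(F)\big|\cdot\big|\coker\,\hat\phi\colon E'(F)\to E(F)\big|}{\big|E(F)[\phi]\big|\cdot\big|E'(F)[\hat\phi]\big|}.
\]
On the other hand, Lemma~\ref{lem:euler} says precisely that $\big|H^1(F,E[\phi])\big|$ equals $\big|E(F)[\phi]\big|\cdot\big|E'(F)[\hat\phi]\big|$ when $p\neq 3$ and $3^{d}$ times this when $p=3$; in either case it equals $|3|_F^{-1}\cdot\big|E(F)[\phi]\big|\cdot\big|E'(F)[\hat\phi]\big|$, where $|\cdot|_F$ is the normalized absolute value (so $|3|_F^{-1}=1$ for $p\neq 3$ and $|3|_F^{-1}=3^{d}$ for $p=3$). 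Hence the corollary is equivalent to the clean product formula $c(\phi)\,c(\hat\phi)=|3|_F^{-1}$.

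To prove this product formula I would use the description of $c(\phi)$ in terms of Tamagawa numbers and the formal-group factor $\alpha_{\phi,F}$. For $p\neq 3$, Corollary~\ref{cor:tamag} gives $c(\phi)=c(E')/c(E)$ and $c(\hat\phi)=c(E)/c(E')$, whose product is $1=|3|_F^{-1}$. For $p=3$, Theorem~\ref{3adic} gives $c(\phi)=\tfrac{c(E')}{c(E)}\,\alpha_{\phi,F}$ and $c(\hat\phi)=\tfrac{c(E)}{c(E')}\,\alpha_{\hat\phi,F}$, so the Tamagawa factors cancel and $c(\phi)\,c(\hat\phi)=\alpha_{\phi,F}\,\alpha_{\hat\phi,F}$, which equals $3^{d}=|3|_F^{-1}$ by Lemma~\ref{lem:prodalpha}. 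Combining this with the previous paragraph gives $|\coker\phi|\cdot|\coker\hat\phi| = |3|_F^{-1}\cdot|E(F)[\phi]|\cdot|E'(F)[\hat\phi]| = |H^1(F,E[\phi])|$, as desired.

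There is no genuine obstacle; the only point requiring care is bookkeeping with the normalization of $|\cdot|_F$, so that the factor $|3|_F^{-1}$ produced by Lemma~\ref{lem:euler} matches the one produced by Lemma~\ref{lem:prodalpha} (equivalently, that $\alpha_{\phi,F}\,\alpha_{\hat\phi,F}=|3|_F^{-1}$ encodes the identity $\phi'(0)\,\hat\phi'(0)=[3]'(0)=3$ in $F^*$, coming from $\hat\phi\circ\phi=[3]$). For a self-contained alternative that avoids the reduction-type tables, one can argue directly from the factorization $[3]=\hat\phi\circ\phi$ on $E(F)$: the snake lemma yields the four-term exact sequence
\[
0 \to E'(F)[\hat\phi]\big/\phi\big(E(F)[3]\big) \to E'(F)\big/\phi E(F) \xrightarrow{\ \hat\phi\ } E(F)\big/3E(F) \to E(F)\big/\hat\phi E'(F) \to 0,
\]
and comparing orders, using $|E(F)/3E(F)| = |3|_F^{-1}\,|E(F)[3]|$, $|\phi(E(F)[3])| = |E(F)[3]|/|E(F)[\phi]|$, and Lemma~\ref{lem:euler}, again gives $|\coker\phi|\cdot|\coker\hat\phi| = |H^1(F,E[\phi])|$. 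Conceptually, the identity records the fact that the images of the local Kummer maps for the dual isogenies $\phi$ and $\hat\phi$ are exact annihilators of one another under the perfect local Tate duality pairing between $H^1(F,E[\phi])$ and $H^1(F,E'[\hat\phi])$.
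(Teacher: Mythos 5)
Your argument is correct and is essentially the paper's own proof: both reduce the identity to $c(\phi)\,c(\hat\phi)=\alpha_{\phi,F}\,\alpha_{\hat\phi,F}$ via Proposition \ref{lem:tamag} (i.e.\ Corollary \ref{cor:tamag} for $p\neq 3$ and Theorem \ref{3adic} together with Lemma \ref{lem:prodalpha} for $p=3$), and then compare with the local Euler characteristic formula of Lemma \ref{lem:euler}. The snake-lemma alternative you sketch is a fine extra but not needed.
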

\begin{proof}
We apply Lemma \ref{lem:tamag} to see that
\[
\frac{\big | \coker E(F) \xrightarrow{\phi} E^\prime(F) \big |}{\big| E(F)[\phi] \big|} \cdot \frac{ \big |  \coker E^\prime(F) \xrightarrow{\hat \phi} E(F) \big |}{\big | E^\prime(F)[\hat\phi]\big |} = \frac{c(E^\prime)}{c(E)}\alpha_{\phi,F}\frac{c(E)}{c(E^\prime)}\alpha_{\hat\phi,F} = \alpha_{\phi,F}\alpha_{\hat\phi,F}.
\]
The result now follows from Corollary \ref{cor:tamag} and  Lemma \ref{lem:prodalpha}.
\end{proof}
We now return to the case $p = 3$.  
\begin{corollary}\label{cong}
If $p = 3$, then there exists $s \in F^*$ such that $1 \leq c(\phi_{s}) \leq 3^d$, where $d = [F \colon \Q_p]$. 
\end{corollary}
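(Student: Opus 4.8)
The plan is to deduce Corollary~\ref{cong} from the duality identity $c(\phi_s)\,c(\hat\phi_s) = 3^d$ together with the observation that the two extreme values $c(\phi_s)$ can take are each realized by at most one quadratic twist.

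First I would pin down the range of $c(\phi_s)$. For every $s\in F^*$ the number $c(\phi_s)$ is an integer power of $3$: its numerator $|\coker\phi_s|$ embeds into $H^1(F,E_s[\phi_s])$, which is killed by $3$, while its denominator $|E_s[\phi_s](F)|$ divides $3$. Applying Lemma~\ref{lem:euler} (with $p=3$) and Corollary~\ref{cor:prodcokers} to the $3$-isogeny $\phi_s$ gives
\[
c(\phi_s)\,c(\hat\phi_s)=\frac{|\coker\phi_s|\cdot|\coker\hat\phi_s|}{|E_s[\phi_s](F)|\cdot|E_s'[\hat\phi_s](F)|}=3^d .
\]
(Alternatively, this identity follows from Theorem~\ref{3adic} and Lemma~\ref{lem:prodalpha}.) Since $|\coker\phi_s|\ge 1$ and $|E_s[\phi_s](F)|\le 3$ we get $c(\phi_s)\ge\tfrac13$, and symmetrically $c(\hat\phi_s)\ge\tfrac13$, so $c(\phi_s)=3^d/c(\hat\phi_s)\le 3^{d+1}$. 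Hence $c(\phi_s)\in\{3^{-1},3^0,\dots,3^{d+1}\}$ for all $s$, and it suffices to exhibit a single $s$ with $c(\phi_s)\notin\{3^{-1},3^{d+1}\}$.

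Next I would show each extreme is rare. If $c(\phi_s)=3^{-1}$, then the quotient formula above forces $|E_s[\phi_s](F)|=3$, i.e.\ the $G_F$-action on $E_s[\phi_s](\bar F)\cong\F_3$ is trivial. As $E_s[\phi_s](\bar F)\cong E[\phi](\bar F)\otimes\chi_s$ as $G_F$-modules (Section~\ref{soluble orbits}), this forces $\chi_s$ to equal the at-most-quadratic character through which $G_F$ acts on $E[\phi](\bar F)$, which singles out one class in $F^*/F^{*2}$; hence $c(\phi_s)=3^{-1}$ for at most one $s$. Running the same argument with $\hat\phi_s\colon E_s'\to E_s$ shows that $c(\hat\phi_s)=3^{-1}$---equivalently $c(\phi_s)=3^{d+1}$---for at most one $s$. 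Since $p=3\ne 2$, the group $F^*/F^{*2}$ has order $4$, so at least $4-2=2$ twist classes satisfy $c(\phi_s)\notin\{3^{-1},3^{d+1}\}$; for any such $s$ we conclude $1\le c(\phi_s)\le 3^d$.

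The only step requiring genuine care is the middle one: verifying that $|E_s[\phi_s](F)|=3$ for exactly one twist class (and likewise $|E_s'[\hat\phi_s](F)|=3$ for exactly one), which is a short computation with the twisting cocycle $\chi_s$ recalled in Section~\ref{soluble orbits}. Everything else is formal once the identity $c(\phi_s)\,c(\hat\phi_s)=3^d$ and the size of $F^*/F^{*2}$ are in hand.
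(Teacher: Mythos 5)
Your proof is correct and follows essentially the same route as the paper: the identity $c(\phi_s)\,c(\hat\phi_s)=3^d$ via Lemma~\ref{lem:euler} and Corollary~\ref{cor:prodcokers}, and a choice of $s$ avoiding the two square classes (those of $D$ and $-3D$) in which $E_s[\phi](F)$ or $E'_s[\hat\phi](F)$ becomes nontrivial, which is possible since $|F^*/F^{*2}|=4$. The paper phrases this by noting that for such $s$ both $c(\phi_s)$ and $c(\hat\phi_s)$ are integers with product $3^d$, whereas you exclude the extreme values $3^{-1}$ and $3^{d+1}$; the content is the same.
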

\begin{proof}
The points of $E[\phi](\bar F)$ are defined over some (possibly) quadratic field $F(\sqrt D)$, while the points of the dual group $E[\hat\phi](\bar F)$ are defined over $F(\sqrt{-3D})$.  Choose $s$ not in the squareclass of either $D$ or $-3D$.  We 
then have ${E_{s}(F)[\phi]=0}$ and $E'_{s}(F)[\hat \phi] = 0$, so that $c(\phi_{{s}}) = \big | \coker E_{s}(F) \to E'_{s}(F) \big |$ and $c(\hat\phi_{{s}}) =  \big |  \coker E'_{s}(F) \to E_{s}(F) \big | $. 
By Lemma \ref{lem:euler}, we have $\big |H^1(F,E_{s}[\phi]) \big | = 3^d$, so by Corollary \ref{cor:prodcokers}, we obtain $c(\phi_{{s}}) \cdot c(\hat\phi_{{s}}) = 3^d$. Since $c(\phi_{{s}})$ and $c(\hat\phi_{{s}})$ are both integers, the desired conclusion follows.
\end{proof}

\section{Further applications to ranks of elliptic curves} \label{sec:appl-to-rank}
In this section, we prove Theorems \ref{totreal} and \ref{cm}.

\begin{proof}[Proof of Theorem $\ref{totreal}$]
We begin by assuming that $F$ is totally real of degree $d$.  Our first aim is to show that $T_0(\phi)$ is non-empty.  For any $s \in F^*$, we define $c_3(\phi_s) = \prod_{\p \mid 3} c_\p(\phi_s)$, parallel to the definition (\ref{cinftyphis}) of $c_\infty(\phi_s)$.  We now use Proposition \ref{twist1}, Corollary \ref{cong}, and weak approximation to find $s \in F^*$ so that
\begin{enumerate}
\item $c_\p(\phi_s) = 1$ for all $\p \nmid 3\infty$,
\item $1 \leq c_3(\phi_s) \leq 3^d$, and
\item $c_\infty(\phi_s) = 1/c_3(\phi_s)$.
\end{enumerate}  
The final condition is possible because for each infinite place $\p$, $c_\p(\phi_s)$ is equal to $1$ or $1/3$ and both possibilities do occur for each of the $d$ real embeddings of $F$.  This shows that the set $T_0(\phi)$ is non-empty and hence has positive density.  The proof that $T_1(\phi)$ has positive density is similar: this time we choose $s$ so that $c_\infty(\phi_s) \cdot c_3(\phi_s) \in \{3, 1/3\}$, which is always possible.  Theorem \ref{totreal}$(i)$ now follows from Theorem \ref{globalbounds}  

Now suppose that $F$ has degree $d$ and exactly one complex place.  As before, we can find $s \in F^*$ satisfying conditions (1) and (2) above, but now the function $c_\infty(\phi_s)$ satisfies $\frac{1}{3^{d-1}} \leq c_\infty(\phi_s) \leq \frac{1}{3}$.  All possible values in this range occur for some $s$, even if we impose conditions (1) and (2).  If $d > 2$, then we can choose $c_\infty(\phi_s)$ so that $c(\phi_s) = c_\infty(\phi_s)c_3(\phi_s) \in \{3,1/3\}$ and hence $T_1(\phi)$ is non-empty.  If $d = 2$, then we can only guarantee that $T_1(\phi) \cup T_0(\phi)$ is non-empty using this method.

We postpone the proof of the final statement of the theorem until Section \ref{examples}, where it will follow from Propostion \ref{prop:3 Selmer lower bd}.
\end{proof}

To prove Theorem \ref{cm}, we begin with a lemma.

\begin{lemma}\label{twist}
Suppose $p$ is a prime, $L$ is a finite extension of $\Q_p$, and $\phi \colon E \to E'$ is a $3$-isogeny over $L$.  Assume that there is a quadratic twist of $E$ with good reduction.  Then $c(E) = c(E')$.
\end{lemma}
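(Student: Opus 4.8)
The plan is to work with the N\'eron component groups. Write $\Phi_E$ and $\Phi_{E'}$ for the groups of connected components of the special fibres of the N\'eron models of $E$ and $E'$ over $\O_L$, regarded as finite \'etale group schemes over the residue field $k$, so that $c(E)=|\Phi_E(k)|$ and $c(E')=|\Phi_{E'}(k)|$. First I would dispose of the case in which $E$ has good reduction: then $c(E)=1$, and, $\phi$ inducing an isomorphism $V_\ell E\simeq V_\ell E'$ of $G_L$-modules for any $\ell\neq p$, the curve $E'$ also has good reduction by the criterion of N\'eron--Ogg--Shafarevich, so $c(E')=1$ too. Hence we may assume $E$ has bad reduction; since $E$ admits a quadratic twist of good reduction it has potentially good, hence additive, reduction, and the hypothesis says exactly that $E$ acquires good reduction over some quadratic extension $M=L(\sqrt d)$ of $L$. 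As $E$ is not already good, $M/L$ cannot be unramified, so $M/L$ is totally ramified of degree $2$.

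The next step is to show that $\Phi_E$ and $\Phi_{E'}$ are $2$-groups. For $\Phi_E$ I would use the standard fact that if an abelian variety over $L$ acquires good reduction over a totally ramified extension of degree $e$, then its component group is killed by $e$: indeed the composite of the base-change map from $\Phi_E$ to the component group of $E$ over $\O_M$ with the norm map back to $\Phi_E$ is multiplication by $e$, and the former group vanishes by good reduction; here $e=2$. For $\Phi_{E'}$: choosing $\ell\notin\{2,3,p\}$, the $3$-isogeny $\phi$ gives a $G_L$-equivariant isomorphism $V_\ell E\simeq V_\ell E'$, so $I_M$ acts trivially on $V_\ell E'$ and hence $E'$ also acquires good reduction over $M$; thus $\Phi_{E'}$ is killed by $2$ as well.

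Finally I would invoke functoriality of N\'eron models: $\phi$ and its dual isogeny $\hat\phi$ induce homomorphisms $\phi_*\colon\Phi_E\to\Phi_{E'}$ and $\hat\phi_*\colon\Phi_{E'}\to\Phi_E$ of group schemes over $k$, with $\hat\phi_*\circ\phi_*=[3]$ on $\Phi_E$ and $\phi_*\circ\hat\phi_*=[3]$ on $\Phi_{E'}$. Since both groups are killed by $2$, multiplication by $3$ is an automorphism of each, so $\phi_*$ is an isomorphism $\Phi_E\xrightarrow{\sim}\Phi_{E'}$ of \'etale $k$-group schemes, i.e.\ a $G_k$-equivariant isomorphism of finite abelian groups; passing to $k$-points gives $c(E)=|\Phi_E(k)|=|\Phi_{E'}(k)|=c(E')$. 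The main things needing care are the two component-group inputs---that totally ramified degree-$e$ base change kills the component group, and the functoriality of the component group under an isogeny together with the formula $\hat\phi_*\phi_*=[\deg\phi]$---but both are standard, and the rest is formal. I note in passing that for $p\neq 3$ there is a shorter route: by Corollary~\ref{cor:tamag} we have $c(E')/c(E)=c(\phi)$, and Proposition~\ref{localratio} shows $c(\phi)=1$ unless $E$ has split multiplicative reduction (impossible, $E$ being potentially good) or $E$ has reduction type $\IV$ or $\IVS$ (impossible, as these have semistability defect $3$ while our $E$ becomes good over an extension of degree at most $2$).
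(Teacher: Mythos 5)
Your proof is correct, but it takes a genuinely different route from the paper's. The paper first reduces to additive, potentially good reduction and then splits into cases: for $p>3$ it quotes Theorem~\ref{integrality} (so $c(\phi)=1$, hence $c(E')=c(E)$ via Corollary~\ref{cor:tamag}), while for $p=2$ and $p=3$ it invokes the Dokchitser--Dokchitser norm-index formulas (Lemmas 4.6 and 4.7 of [DD]), expressing $c(E')/c(E)$ in terms of $|E(L)/N E(M)|$ over the quadratic extension $M$ where good reduction is attained, and concludes by the clash ``power of $3$ equals power of $2$.'' You instead work directly with N\'eron component groups: the trace/Weil-restriction argument over the totally ramified quadratic $M$ shows $\Phi_E$ and $\Phi_{E'}$ are killed by $2$, and functoriality of N\'eron models gives $\hat\phi_*\phi_*=[3]$ and $\phi_*\hat\phi_*=[3]$, so $\phi_*$ is a Galois-equivariant isomorphism of component groups and $c(E)=c(E')$. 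What this buys is a proof uniform in $p$ (no case split, no appeal to [DD]); what it costs is reliance on two N\'eron-model facts you should cite rather than call standard: that the N\'eron model commutes with Weil restriction (BLR 7.6/6), so that the norm composite really is $[e]$ on $\Phi_E$ with the middle group vanishing, and functoriality of $\Phi$ under isogeny with $(\hat\phi\circ\phi)_*=[3]_*$. Note also that both proofs ultimately exploit the same parity tension between $2$ and $3$. One small caveat: in your parenthetical shortcut for $p\neq 3$, the claim that types $\IV$ and $\IVS$ force semistability defect $3$ is clean only for $p\geq 5$; at $p=2$ the defect is not read off the Kodaira type so directly, and the cleaner way to exclude these types is exactly your main argument (their geometric component group is $\Z/3\Z$, contradicting $2\Phi_E=0$). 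Since that remark is an aside and your main argument covers all $p$, this does not affect correctness.
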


\begin{proof}
We may assume that $E$ has additive, potentially good reduction.  If $p > 3$, then the lemma follows from Theorem  \ref{integrality}. If $p = 2$, then by \cite[Lem.\ 4.6]{DD}, we have
\[\dfrac{c(E')}{c(E)} = \left|\dfrac{E'(L)}{NE'(M)}\right|\left|\dfrac{E(L)}{NE(M)}\right|^{-1},\]
where $M$ is a quadratic extension of $L$ and $N \colon E(M) \to E(L)$ is the norm.  As the left hand side is a power of 3 and the right hand side is a power of 2, we conclude that $c(E')= c(E)$.  If $p = 3$, we use a similar argument in conjunction with \cite[Lem.\ 4.7]{DD}.   
\end{proof}

The following result will imply Theorem \ref{cm}.   

\begin{theorem}\label{cm1}
Suppose $\phi \colon E \to E'$ is a $3$-isogeny of elliptic curves over a number field $F$ and $\End(E) \simeq \End(E') \simeq \O_K$ for some imaginary quadratic field $K$.  Then $c(\phi) = 1$.  
\end{theorem}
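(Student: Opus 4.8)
The plan is to compute the global Selmer ratio $c(\phi)=\prod_v c_v(\phi)$ place by place, exploiting the CM structure together with Cassels' formula \cite{Cassels8} (which gives $c(\phi)\,c(\hat\phi)=1$). Since $\End_F(E)\cong\O_K$ forces $K\subseteq F$, the field $F$ is totally imaginary; write $r_2=[F:\Q]/2$ for the number of archimedean places, all of which are complex, so Proposition~\ref{R} contributes a factor of $3^{-r_2}$ to $c(\phi)$. Moreover, since $\End_F(E')\cong\O_K$ is the \emph{maximal} order, $\ker\phi\subseteq E$ is stable under $\O_K\subseteq\End_F(E)$, hence equals $E[\p]$ for a prime $\p\mid 3$ of $\O_K$; in particular $3$ is not inert in $K$, and dually $\ker\hat\phi=E'[\overline{\p}]$.

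The first step is to show $c_v(\phi)=1$ away from $3$ and $\infty$, and $c_v(\phi)=\alpha_{\phi,v}$ for $v\mid 3$. If $j(E)\neq 0,1728$ then $\Aut(E_{\overline{F}})=\{\pm 1\}$, so $E$ acquires good reduction over an extension of $F_v$ of degree at most $2$ at every finite place, hence admits a quadratic twist of good reduction there; by Lemma~\ref{twist}, $c(E_v)=c(E'_v)$ at every finite $v$, whence $c_v(\phi)=c(E'_v)/c(E_v)=1$ for $v\nmid 3\infty$ by Corollary~\ref{cor:tamag}, and $c_v(\phi)=\alpha_{\phi,v}$ for $v\mid 3$ by Theorem~\ref{3adic}. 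The case $j(E)=1728$ does not arise, as $3$ is inert in $\Q(i)$. When $j(E)=0$, $\p=(\sqrt{-3})$ is principal, so $E'\cong E$ over $F$ (because $\sqrt{-3}\in F$); then $c(E_v)=c(E'_v)$ trivially for all $v$, and $\hat\phi=[-1]\circ\phi$ differs from $\phi$ by an automorphism, so $\alpha_{\phi,v}=\alpha_{\hat\phi,v}$ for $v\mid 3$. In every case it therefore remains to prove $\prod_{v\mid 3}\alpha_{\phi,v}=3^{r_2}$, for then $c(\phi)=3^{-r_2}\cdot 3^{r_2}=1$.

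The key point is that the ``isogeny different'' $\mathfrak{d}_\phi$ --- the fractional $\O_F$-ideal with $v(\mathfrak{d}_\phi)=v\!\left(\phi^{*}\omega'/\omega\right)$ for every $v$, where $\omega,\omega'$ are N\'eron differentials --- equals $\p\O_F$. Granting this, $\prod_{v\mid 3}\alpha_{\phi,v}=\prod_{v\mid 3}|\phi'(0)|_v^{-1}=N_{F/\Q}(\mathfrak{d}_\phi)=N_{K/\Q}(\p)^{[F:K]}=3^{[F:K]}=3^{r_2}$. When $3=\p\overline{\p}$ splits this is checked directly: $E$ is ordinary at every $v\mid 3$, and $\ker\phi=E[\p]$ reduces into the formal group $\hat E$ at the places lying over one of $\p,\overline{\p}$ and étale-ly at those over the other, so $v(\phi^{*}\omega'/\omega)$ equals $v(3)$ at half the places above $3$ and $0$ at the rest --- exactly the valuations of $\p\O_F$; this is consistent with the ``potentially ordinary'' row of Table~\ref{tab:values of alpha} and with $\alpha_{\phi,v}\alpha_{\hat\phi,v}=3^{d_v}$ (Lemma~\ref{lem:prodalpha}). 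When $j(E)=0$ it is immediate, since $\phi$ is then multiplication by $\sqrt{-3}$ and $\phi^{*}\omega'/\omega$ generates $(\sqrt{-3})=\p$.

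The main obstacle is the case where $3$ ramifies in $K$ and $\p$ is non-principal: then $E$ has potentially supersingular reduction at the primes $v\mid 3$, which are themselves ramified over $\Q_3$, and Theorem~\ref{3adic} gives only $1\le\alpha_{\phi,v}\le 3^{d_v}$. I expect to prove $\mathfrak{d}_{\phi,v}=\p\O_{F_v}$, equivalently $v(\phi^{*}\omega'/\omega)=\tfrac12 v(3)$, as follows: since $\p=\overline{\p}$, both $\phi$ and $\hat\phi$ are $\p$-isogenies; the identity $(\phi^{*}\omega'/\omega)(\hat\phi^{*}\omega/\omega')=3$ in $F_v^{*}$ (coming from $\hat\phi\circ\phi=[3]$) shows $\mathfrak{d}_{\phi,v}\,\mathfrak{d}_{\hat\phi,v}=(3)=\p^2\O_{F_v}$; and the $\O_{K,\p}$-module isomorphism $E'[\p^\infty]\cong E[\p^\infty]$, under which $\phi$ becomes multiplication by a uniformizer of $\O_{K,\p}$ and $\hat\phi$ multiplication by its conjugate, forces $\mathfrak{d}_{\phi,v}=\mathfrak{d}_{\hat\phi,v}$, so both equal $\p\O_{F_v}$. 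The care needed here is in working with the connected component of the N\'eron model (or the formal group) rather than the generic fiber, since $E$ may have additive reduction above $3$; once $\prod_{v\mid 3}\alpha_{\phi,v}=3^{r_2}$ is established the theorem follows, and $c(\hat\phi)=1$ as well by Cassels' formula.
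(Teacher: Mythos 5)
Your handling of the places away from $3$ essentially coincides with the paper's: for $j\neq 0,1728$ the good-reduction twist is quadratic and Lemma \ref{twist} gives $c_v(E)=c_v(E')$, the case $j=1728$ is excluded, and for $j=0$ one has $E\simeq E'$; together with Corollary \ref{cor:tamag} and Proposition \ref{R} this reduces everything to showing $\prod_{v\mid 3}\alpha_{\phi,F_v}=3^{[F:K]}$. Where you diverge is exactly at this last step, and that is where the gap is. You propose to compute the ``isogeny different'' directly from the formal groups, and in the case where $3$ ramifies in $K$ you yourself only say you ``expect to prove'' that $v(\phi^*\omega'/\omega)=\tfrac12 v(3)$. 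But this is the crux of the theorem, and it is precisely the case (potentially supersingular reduction, $F_v$ ramified over $\Q_3$, possibly additive reduction) for which the paper's Theorem \ref{3adic} and Table \ref{tab:values of alpha} record only the bounds $1\le\alpha_{\phi,F_v}\le 3^{d}$ -- i.e.\ the local invariant is not pinned down by reduction data alone. Your symmetry argument ($\mathfrak{d}_{\phi,v}\mathfrak{d}_{\hat\phi,v}=3\O_{F_v}$ plus $\mathfrak{d}_{\phi,v}=\mathfrak{d}_{\hat\phi,v}$) needs an identification of the formal groups of $E$ and $E'$ as $\O_{K}\otimes\Z_3$-modules \emph{over $\O_{F_v}$, compatibly with the N\'eron (minimal) models}, since $\alpha_{\phi,F_v}$ is defined via N\'eron differentials; when the reduction at $v\mid 3$ is additive, the special fiber of the formal group is $\widehat{\mathbb{G}}_a$, Lubin--Tate height arguments do not apply over $\O_{F_v}$, and passing to an extension with good reduction changes both minimal models by factors you have not controlled (they need not cancel between $E$ and $E'$). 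The same caveat, in milder form, affects your split-at-$3$ case: with additive potentially ordinary reduction the connected--\'etale argument must be run on the N\'eron model, which is what the ``(potentially) ordinary'' row of Table \ref{tab:values of alpha} encodes.

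The paper avoids this entire difficulty by a short trick you did not find: choose $\ell\neq 3$ and an $\ell$-isogeny $\lambda\colon E'\to E$ (possible since both curves have CM by the maximal order), so that $a=\lambda\circ\phi\in\End(E)\simeq\O_K$. An endomorphism acts on a \emph{single} N\'eron model, so $\alpha_{a,\p}=|a|_\p^{-1}$ (\cite[Prop.\ 3.14]{rubin}) with no minimal-model comparison at all, and $\alpha_{\lambda,\p}=1$ because $\ell\neq 3$; hence $\alpha_{\phi,\p}=|a|_\p^{-1}$, and $\prod_{\p\mid 3}|a|_\p^{-1}$ is the $3$-part of $\Nm_{K/\Q}(a)^{[F:K]}=(3\ell)^{[F:K]}$, giving $3^{[F:K]}$ uniformly in the reduction type and in whether $3$ is split or ramified. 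If you want to salvage your route, the cleanest fix is exactly this reduction to an endomorphism (or some equivalent use of the $\O_K$-action on the N\'eron model); as written, the ramified case of your argument is an acknowledged but unfilled gap, and it is the hardest and essential case.
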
  

\begin{proof}
We first show that for all finite places $\p$ of $F$, $c_\p(E') = c_\p(E)$.  By \cite[5.22]{rubin}, some twist of $E$ has good reduction at $\p$.  If this twist is a quadratic twist, then it follows from Lemma \ref{twist} that $c_\p(E') = c_\p(E)$.  If the twist is not a quadratic twist, then we must have $j(E) \in \{0, 1728\}$.  The case $j(E) = 1728$ does not occur because that is the unique $j$-invariant with CM by $\Z[i]$ and these elliptic curves do not admit endomorphisms of degree~3.  If $j(E) = 0$, then we must have $j(E') = 0$ since there is a unique $j$-invariant with CM by the ring of integers of $\Q(\sqrt{-3})$.  Since $F$ necessarily contains $\Q(\sqrt{-3})$, we even have $E \simeq E'$ and $\phi$ is (up to automorphism) multiplication by $\sqrt{-3}$.  Hence $c_\p(E') = c_\p(E)$ in all cases.  

In particular, if $\p$ is a finite place not above 3, then $c_\p(\phi) = 1$ by Lemma \ref{lem:tamag}.  On the other hand, if $\p$ is an infinite place of $F$, then $\p$ is complex since $K \subset F$, and so $c_\p(\phi) =  \frac{1}{3}$ by Proposition \ref{R}.  So it remain to show that $c_3(\phi) := \prod_{\p | 3} c_\p(\phi)$ is equal to $3^{[F \colon K]}$.  By Lemma \ref{lem:tamag}, we have $c_3(\phi) = \prod_{\p | 3} \alpha_{\phi, F_\p}$, where recall $\alpha_{\phi, F_\p}$ is the inverse absolute value of the derivative at the origin of the isogeny of formal groups induced by $\phi$.    

Since $\End(E) \simeq \End(E') \simeq \O_K$, we may choose a prime $\ell \neq 3$ and an $\ell$-isogeny $\lambda \colon E' \to E$.  The composition $\lambda \circ \phi$ is an endomorphism $a  \in \End(E) \simeq \O_K$, and we have $\alpha_{\phi, \p} \alpha_{\lambda,\p} = \alpha_{a, \p}$.  But $\alpha_{\lambda,\p} = 1$ since $\ell \neq 3$, and $\alpha_{a, \p} = |a|_\p^{-1}$ by \cite[Prop.\ 3.14]{rubin}.  Thus $\alpha_{\phi,\p} = |a|_\p^{-1}$ and we compute
\[c_3(\phi) = \prod_{\p | 3} \alpha_{\phi, \p} = \prod_{\p | 3} |a|_\p^{-1} = \left(\prod_{v | 3} |a|_v^{-1}\right)^{[F \colon K]},\]
where the final product is over places $v$ of $K$ above 3.  Since $\prod_{v | 3} |a|_v^{-1}$ is the 3-part of 
\[\Nm_{K/\Q}(a) = \deg(a \colon E \to E) = 3\ell,\] 
we conclude that $c_3(\phi) = 3^{[F\colon K]}$, as desired.
\end{proof}

\begin{remark}
{If either $\End(E)$ or $\End(E')$ is a non-maximal  order in $K$, then it can happen that $c(\phi) \neq 1$.}
\end{remark}

\begin{proof}[Proof of Theorem $\ref{cm}$]
First suppose that $\End(E) \simeq \O_K$.  Since 3 is not inert in $\O_K$, there is a 3-isogeny $\phi \colon E \to E'$ with $\End(E') \simeq \O_K$.  Indeed, we may take $E' = E/E[\p]$ where $\p \subset \O_K$ is an ideal of norm 3 and $E[\p]$ is the $\p$-torsion on $E$.  Note that $\End(E_s) \simeq \End(E'_s) \simeq \O_K$ for any $s \in F^*/F^{*2}$.  It follows from Theorem \ref{cm1} that $c(\phi_s) = 1$ for all $s \in F^*/F^{*2}$, or in other words, $F^*/F^{*2} = T_0(\phi)$.  We conclude from Theorem \ref{globalbounds} that the average rank of $E_s(F)$ is at most 1, and that at least $50\%$ of twists $E_s$ have rank 0.

In the general case where $\End(E)$ is just an order in $\O_K$, we may choose an elliptic curve $\tilde E$ with $\End(\tilde E) \simeq \O_K$ which is isogenous to $E$ \cite[Prop.\ 5.3]{rubin}.  Then for each $s \in F^*/F^{*2}$, the quadratic twist $E_s$ is isogenous to $\tilde E_s$.  Theorem \ref{cm} for $E$ therefore follows from Theorem \ref{cm} for $\tilde E$, already proven in the previous paragraph.        
\end{proof}

\section{Ranks of Jacobians of trigonal curves}\label{superelliptic}
In this section we describe a class of Jacobians satisfying the hypothesis of Theorem \ref{ab var ranks}.  We work over a number field $F$ and consider trigonal curves of the form
\[C\colon y^3  = f(x),\]
where $f(x)$ is a monic polynomial of degree $d \geq 3$ over~$F$, with no repeated roots over $\bar F$.  If $d > 3$, then $C$ is singular at the unique point at infinity.  Suppose also that $3 \nmid d$.  Then there is a unique point $\infty$ in the normalization lying above the point at infinity.  The normalization, which we will also call $C$, has genus $g = d -1$, by Riemann-Hurwitz.  

Embed $C$ in its Jacobian $J$ using the rational point $\infty$.  Let $a_1, \ldots, a_d$ be the roots of $f$ over $\bar F$, and write $P_i$ for the point $(a_i,0)$ on $C$.  Since the divisor of the function $x - a_i$ is $3P_i - 3\infty$, we see that each $P_i$ is 3-torsion in $J(\bar F)$.  In fact, the points $P_i$ are fixed by the automorphism $\zeta \colon (x,y) \mapsto (x,\zeta_3 y)$, where $\zeta_3$ is a primitive third root of unity, and so they lie in the kernel of the endomorphism $\psi = 1 - \zeta$ of $J$.  This endomorphism is defined only over $F(\zeta_3)$, but its kernel descends to a subgroup scheme $G$ of $J$ of order $3^g$ defined over~$F$.  By \cite[Prop. 3.2]{schaefer2}, the points $P_1, \ldots, P_d$ span $G(\bar F)$ with the unique relation $\sum_{i = 1}^d P_i = 0$.  

Over the field $K = F(\zeta_3)$ we have $(J/G)_K  \simeq J_K/J_K[\psi] \simeq J_K$, so $J/G$ is a twist of $J$. In fact:

\begin{proposition}\label{ppav}
$J/G$ is the quadratic twist $J_{-3}$ of $J$ by the quadratic field $K = F(\zeta_3)$.  
\end{proposition}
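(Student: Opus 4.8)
The plan is to produce an explicit $K$-isomorphism $f\colon (J/G)_K \xrightarrow{\sim} J_K$ and then read off the $\Gal(K/F)$-descent datum it defines, checking that this cocycle is exactly the one cutting out the quadratic twist $J_{-3}$. Throughout, $[-1]$ denotes inversion on $J$, and we recall that $K = F(\zeta_3) = F(\sqrt{-3})$, since $\Q(\zeta_3)=\Q(\sqrt{-3})$.

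First I would work with the endomorphism $\eta := \zeta - \zeta^2 = \zeta\circ\psi \in \End(J_K)$ in place of $\psi = 1-\zeta$ itself. Under the embedding $\Z[\zeta_3]\hookrightarrow\End(J_K)$, $\zeta_3\mapsto\zeta$, this $\eta$ is the element $\sqrt{-3}$; indeed $\eta^2 = (\zeta-\zeta^2)^2 = \zeta^2 - 2 + \zeta = -3$, which already foreshadows the conclusion, though this identity is not needed below. Since $\zeta$ is an automorphism of $J_K$, we have $\ker\eta = \ker\psi = G_K$, so $\eta$ is an isogeny inducing an isomorphism $f\colon (J/G)_K \xrightarrow{\sim} J_K$ characterized by $f\circ\pi = \eta$, where $\pi\colon J\to J/G$ is the quotient map, which is defined over $F$.

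Next I would compute the descent cocycle of $f$. Let $\sigma$ generate $\Gal(K/F)$; if $K = F$ there is nothing to prove, since then $-3\in F^{*2}$ so $J_{-3}=J$, while $\psi$ is already $F$-rational and $J/G\cong J$. Because $C$ and $\pi$ are defined over $F$ and $\zeta$ is given by the formula $(x,y)\mapsto(x,\zeta_3 y)$, conjugation by $\sigma$ sends $\zeta\mapsto\zeta^{-1}=\zeta^2$, hence $\eta = \zeta-\zeta^2 \mapsto \zeta^2-\zeta = -\eta$. Applying $\sigma$ to the identity $f\circ\pi = \eta$ and using that $\pi$ is $F$-rational (so ${}^\sigma\pi = \pi$) gives ${}^\sigma f\circ\pi = -\eta = [-1]\circ f\circ\pi$; since $\pi$ is an epimorphism, ${}^\sigma f = [-1]\circ f$. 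Consequently the cocycle $g\mapsto f\circ({}^g f)^{-1}\in\Aut(J_{\bar F})$ is trivial when $g$ fixes $K$ and equals $[-1]$ when $g$ restricts to $\sigma$ — that is, it is the homomorphism $\chi_{-3}\colon G_F\to\{\pm1\}=\{\id,[-1]\}\subset\Aut(J)$ attached to the extension $F(\sqrt{-3})/F$. By the definition of the quadratic twist of an abelian variety by descent recalled in Section~\ref{soluble orbits}, this is precisely the statement that $J/G \cong J_{-3}$.

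The only delicate point is the cohomological bookkeeping: one must pin down the convention whereby a $\bar F$-isomorphism $(J/G)_{\bar F}\xrightarrow{\sim}J_{\bar F}$ together with the canonical $F$-structures defines a class in $H^1(G_F,\Aut(J_{\bar F}))$, and match it with the convention used to define $J_s$ from $\chi_s$ in Section~\ref{soluble orbits}. Since the relevant cocycles take values in $\{\pm1\}$, where every element is its own inverse, the competing normalizations $f\circ({}^g f)^{-1}$ and $({}^g f)^{-1}\circ f$ coincide, so this is a matter of care rather than substance; the only actual computation involved, ${}^\sigma\eta=-\eta$, is immediate.
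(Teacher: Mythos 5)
Your proof is correct and is essentially the paper's own argument: both exploit the isomorphism $(J/G)_K\simeq J_K$ coming from $\zeta(1-\zeta)=\zeta-\zeta^2$ (the ``$\sqrt{-3}$'' endomorphism) and the computation that the nontrivial element of $\Gal(K/F)$ sends $\zeta-\zeta^2$ to its negative, so the descent cocycle is $[-1]$ on that element, i.e.\ the character of $F(\sqrt{-3})/F$. The only difference is cosmetic (you factor the isogeny $\eta$ through $\pi$ rather than writing the inverse isomorphism pointwise as the paper does), so nothing further is needed.
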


\begin{proof}
To compute the cocycle $g \colon \Gal(K/F) \to \Aut(J_K)$ giving the twisting data for $J/G$, we use the isomorphism $f \colon J_K \to (J/G)_K$ that sends a point $P$ to the class of any $Q$ such that $\zeta(1-\zeta)(Q) = P$.  This is indeed an isomorphism over $K$ with inverse $P \mapsto \zeta(1-\zeta)(P)$.  The corresponding cocycle $g \colon \Gal(K/F) \to \Aut(J_K)$ is by definition $g(\sigma) = f^{-1}f^\sigma$.  It is determined by $g(\tau)$ where $\tau$ is the generator of $\Gal(K/F)$.  We have $g(\tau)(P) = f^{-1}(Q)$ where $Q$ is such that $\zeta(\zeta -1)(Q) = P$.  Thus 
\[g(\tau)(P) = f^{-1}(Q) = \zeta(1-\zeta)(Q) = -P.\]  
So $g(\tau) = -1$, which shows that $J/G$ is the quadratic twist of $J$ by $K$.        
\end{proof}

\begin{corollary}
The abelian variety $J/G$ is principally polarized.
\end{corollary}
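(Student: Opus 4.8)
The plan is to read this off Proposition~\ref{ppav}: the Jacobian $J$ of the (normalized) curve $C$ is a Jacobian, hence carries its canonical principal polarization $\lambda_\theta \colon J \to \widehat J$, and the class of principally polarized abelian varieties is stable under quadratic twist. By Proposition~\ref{ppav} we have $J/G \simeq J_{-3}$, so it suffices to show that the quadratic twist $J_s$ of a principally polarized abelian variety $(J,\lambda_\theta)$ over $F$ is again principally polarized, for any $s\in F^*/F^{*2}$.

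To see this, recall that $J_s$ is obtained from $J$ by Galois descent along the cocycle $\sigma\mapsto \chi_s(\sigma)\in\{\pm1\}\subseteq\Aut(J_{\bar F})$, where $\chi_s$ is the quadratic character cutting out $F(\sqrt s)/F$. Since duality is compatible with twisting and $\widehat{[-1]_J}=[-1]_{\widehat J}$, there is a canonical identification $\widehat{J_s}\simeq(\widehat J)_s$. Now $\lambda_\theta$, being a homomorphism of abelian varieties, satisfies $[-1]_{\widehat J}\circ\lambda_\theta\circ[-1]_J=\lambda_\theta$; equivalently, over $\bar F$ the morphism $\lambda_\theta$ is equivariant for the twisted Galois actions defining $J_s$ and $(\widehat J)_s$. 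Hence $\lambda_\theta$ descends to a homomorphism $\lambda_{\theta,s}\colon J_s\to\widehat{J_s}$ over $F$. Over $\bar F$ this morphism is identified with $\lambda_{\theta,\bar F}$, which is an isomorphism of the expected form; since both being an isomorphism and being a polarization may be checked after base change to $\bar F$, the descended map $\lambda_{\theta,s}$ is a principal polarization on $J_s$. Taking $s=-3$ and invoking Proposition~\ref{ppav} gives the corollary.

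I do not anticipate a genuine obstacle: the only point requiring care is that the twisting cocycle takes values in automorphisms that commute with $\lambda_\theta$ (so that the descent datum on $J$ is compatible with the one on $\widehat J$), which holds because $[-1]$ is such an automorphism. As an alternative argument avoiding twists, one may observe that over $K=F(\zeta_3)$ the subgroup $G=\ker\psi$ lies in $J[3]$ and has order $3^g$, and that, since $\zeta$ is induced by an automorphism of $C$, the Rosati adjoint of $\zeta$ is $\zeta^{-1}$, so the Rosati adjoint of $\psi=1-\zeta$ is $\bar\psi=1-\zeta^{-1}$ with $\psi\bar\psi=2-(\zeta+\zeta^{-1})=3$; thus $G$ is a maximal isotropic subgroup of $J[3]$ for the Weil pairing attached to $\lambda_\theta$, and the quotient of a principally polarized abelian variety by such a subgroup --- here all defined over $F$ --- is again principally polarized. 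Either route is routine.
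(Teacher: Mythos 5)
Your proposal is correct and follows essentially the same route as the paper: the paper deduces the corollary from Proposition~\ref{ppav} together with the fact that a quadratic twist of a principally polarized abelian variety is principally polarized, for which it simply cites \cite[Lem.\ 4.16]{morgan}, whereas you supply a (correct) descent proof of that fact, noting that the canonical principal polarization commutes with $[-1]$ and hence descends along the quadratic twisting cocycle. Your alternative argument via the maximal isotropic subgroup $G\subset J[3]$ (using that the Rosati adjoint of $\psi=1-\zeta$ is $1-\zeta^{-1}$, so $\psi^\dagger\psi=3$) is also sound, though it is not the route the paper takes.
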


\begin{proof}
By \cite[Lem.\ 4.16]{morgan}, the quadratic twist of a principally polarized abelian variety is itself principally polarized.
\end{proof}

\begin{theorem}\label{superelliptic twists}
Suppose that $3\nmid d$ and that at least $d-2$ roots of $f(x)$ lie in $F$.  Then the average rank of the quadratic twists $J_s$, $s \in F^*/F^{*2}$, is bounded. 
\end{theorem}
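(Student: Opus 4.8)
The plan is to apply Theorem~\ref{ab var ranks} with $A = J$, for which it suffices to construct an isogeny $J \to J$ over~$F$ that factors as a composition of $3$-isogenies over~$F$. I would build such a self-isogeny in two ``rounds'': the quotient $q\colon J \to J/G$, followed by an analogous quotient $J_{-3} \to (J_{-3})_{-3} = J$ obtained after identifying $J/G$ with $J_{-3}$ via Proposition~\ref{ppav}.

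First I would check that $q\colon J\to J/G$ is a composition of $g = d-1$ three-isogenies over~$F$. By hypothesis at least $d-2$ of the roots $a_1,\dots,a_d$ lie in~$F$; since $f$ is monic over~$F$, having $d-1$ rational roots forces all $d$ to be rational, so after relabeling we may assume $a_1,\dots,a_{d-2}\in F$ and hence $P_1,\dots,P_{d-2}\in J(F)$. By \cite[Prop.~3.2]{schaefer2} the points $P_1,\dots,P_d$ span $G(\bar F)\cong(\Z/3\Z)^{g}$ subject only to $\sum_{i=1}^d P_i=0$, so $P_1,\dots,P_{d-1}$ form a basis and in particular $P_1,\dots,P_{d-2}$ are independent. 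Then $H_i:=\langle P_1,\dots,P_i\rangle$ (for $0\le i\le d-2$) is a constant subgroup scheme of $G$ of order $3^i$ over~$F$, and $G/H_{d-2}$ has order $3$ and is defined over~$F$ since $G$ and $H_{d-2}$ are; the resulting flag $0=H_0\subset H_1\subset\cdots\subset H_{d-2}\subset G$ exhibits $q$ as the composition
\[
J\longrightarrow J/H_1\longrightarrow\cdots\longrightarrow J/H_{d-2}\longrightarrow J/G,
\]
each map a $3$-isogeny over~$F$.

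Next I would repeat this on the second round. By Proposition~\ref{ppav}, $J/G\cong J_{-3}$ over~$F$. Over $K=F(\zeta_3)$ the twist is trivial, so $J_{-3}$ still carries the order-$3$ automorphism $\zeta$, and $G=\ker(1-\zeta)$---being stable under $G_F$ and under $[-1]$---descends to an order-$3^{g}$ subgroup scheme $G'\subset J_{-3}$ over~$F$, as do the twists $H_i'\subset G'$ of the $H_i$. The computation in the proof of Proposition~\ref{ppav}, applied verbatim with $J_{-3}$ in place of~$J$, identifies $J_{-3}/G'$ with $(J_{-3})_{-3}=J$ (using $9\in F^{*2}$), and the flag $0=H_0'\subset\cdots\subset H_{d-2}'\subset G'$ exhibits the quotient $q'\colon J_{-3}\to J$ as a composition of $g$ three-isogenies over~$F$. (The $H_i'$ need not be constant group schemes, but they are honest order-$3$ group schemes over~$F$, which is all Theorem~\ref{ab var ranks} requires.) Composing,
\[
J\xrightarrow{q}J/G\xrightarrow{\sim}J_{-3}\xrightarrow{q'}J_{-3}/G'\xrightarrow{\sim}J
\]
is an isogeny $J\to J$ over~$F$ that factors as $2(d-1)$ three-isogenies over~$F$, so Theorem~\ref{ab var ranks} gives the boundedness of the average rank of~$J_s$.

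The step demanding the most care is the construction of the flag in the first round: one must verify that ``at least $d-2$ rational roots'' genuinely yields $d-2$ independent rational points of~$G$, which is exactly where the precise structure of~$G$ from \cite{schaefer2}---the single relation $\sum P_i=0$---enters. The second round is then essentially formal, since the twisting cocycle for $J_{-3}$ is scalar-valued and so commutes with $\zeta$, with~$G$, and with the~$H_i$. (Alternatively, one could bypass the second round entirely by noting that the dual isogeny $\hat{q}$ is again a composition of $3$-isogenies over~$F$ and composing $q$ with $\hat{q}$ through the principal polarizations on~$J$ and on~$J/G$, the latter supplied by the Corollary preceding this theorem.)
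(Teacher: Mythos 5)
Your proposal is correct, and its first half is exactly the paper's argument: the paper likewise uses the hypothesis that $d-2=g-1$ of the points $P_i$ are rational, together with Schaefer's description of $G$ (the single relation $\sum_i P_i=0$ guaranteeing independence), to produce $H\simeq(\Z/3\Z)^{g-1}\subset G$ over~$F$ and factor $\psi\colon J\to J/G$ as $J\to J/H\to J/G$, a composition of $3$-isogenies over~$F$. Where you diverge is in producing the return isogeny $J/G\to J$: the paper identifies $J$ and $J/G$ with their duals via their principal polarizations (the one on $J/G$ coming from the corollary after Proposition~\ref{ppav}), observes that $\hat\psi\circ\psi$ is $[3]$ up to an $F$-automorphism, and concludes that $\hat\psi$ factors into $3$-isogenies by duality; you instead rerun the quotient construction on $J_{-3}\cong J/G$, using that the twisting cocycle takes values in $\{\pm1\}$ and hence commutes with $\zeta$, with $G$, and with the $H_i$, so that the twisted flag $H_i'\subset G'$ is defined over~$F$ and $J_{-3}/G'\cong (J_{-3})_{-3}=J$. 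Both routes are sound: the paper's is shorter because duality supplies the second half essentially for free, at the cost of the polarization bookkeeping, while yours avoids dualizing altogether (and never needs to recognize the composite as $[3]$, which Theorem~\ref{ab var ranks} indeed does not require), at the cost of verifying that quadratic twisting preserves the construction. Your concluding parenthetical alternative is precisely the paper's proof.
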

\begin{proof}
Use the principal polarizations on $J$ and $J/G$ to identify these abelian varieties with their duals.  Then the composition
\[J \stackrel{\psi}{\longrightarrow} J/G \stackrel{\hat\psi}{\longrightarrow} J, \]
is the map $[3]:J\to J$, up to an automorphism of $J$ over~$F$.  As there are at least $d-2 = g - 1$ rational roots of $f(x)$, we may choose $g-1$ of the points $P_i$ which are rational.  Since the unique relation among all the $P_i$'s is $\sum_{i = 1}^{g+1} P_i = 0$, the chosen rational points $P_i$ generate a subgroup $H \simeq (\Z/3\Z)^{g-1}$ of $G$.  We may then factor $\psi$ as 
\[J \to J/H \to J/G.\]  The first map is a composition of 3-isogenies and the second is itself a 3-isogeny.  Thus, $\psi$ factors as a composition of $3$-isogenies over~$F$, and by duality, so does $\hat\psi$. It follows that $[3]\colon J \to J$ factors as a composition of 3-isogenies.  So our Theorem 1.2 applies, and the average rank of the quadratic twists $J_s$, $s\in F^*/F^{*2}$, is bounded.  
\end{proof}

\begin{corollary}\label{cor:super}
Let $F$ be a number field.  For every positive integer $g$ not congruent to $2\pmod 3$, there exists a Jacobian $J$ of genus $g$ over~$F$ such that the average rank of $J_s$, $s \in F^*/F^{*2}$, is bounded. 
\end{corollary}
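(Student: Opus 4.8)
The plan is to deduce the corollary directly from Theorem~\ref{superelliptic twists} by choosing the degree of the trigonal model appropriately. Given a positive integer $g$ with $g \not\equiv 2 \pmod 3$, I would set $d = g+1$; since $d \equiv g+1 \pmod 3$, the hypothesis $g \not\equiv 2 \pmod 3$ is exactly the condition $3 \nmid d$ required in Section~\ref{superelliptic} for a curve $y^3 = f(x)$ with $\deg f = d$ to admit a smooth projective model of genus $d - 1 = g$.

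Next I would exhibit a suitable $f$. Because $F$ is infinite, I can take $f(x)$ to be a product of $d$ distinct monic linear factors with coefficients in $F$ (for instance $f(x) = \prod_{i=1}^{d}(x-i)$): this $f$ is monic of degree $d$, has no repeated roots over $\bar F$, and all of its roots --- in particular at least $d-2$ of them --- lie in $F$. The curve $C\colon y^3 = f(x)$ is then geometrically integral, its normalization has genus $g$, and its Jacobian $J$ has dimension $g$. All hypotheses of Theorem~\ref{superelliptic twists} are met, so the average rank of the quadratic twists $J_s$, $s \in F^*/F^{*2}$, is bounded, and $J$ has the required dimension.

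The only point needing a separate word is the case $g = 1$, where $d = 2$ lies outside the standing convention $d \geq 3$ of Section~\ref{superelliptic}. One can handle this either by observing that the proof of Theorem~\ref{superelliptic twists} still goes through when $d = 2$ --- the group $H$ is then trivial, so $\psi\colon J \to J/G$ is itself a $3$-isogeny and $[3] = \hat\psi\circ\psi$ is a composition of $3$-isogenies, whence Theorem~\ref{ab var ranks} applies --- or, more simply, by taking $J$ to be an elliptic curve admitting a $3$-isogeny, such as $y^2 = x^3 + (ax+b)^2$ for a generic choice of $a,b \in F$, and invoking Theorem~\ref{ab var ranks} directly. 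In all of this there is no substantive obstacle: the corollary is essentially a repackaging of Theorem~\ref{superelliptic twists}, and the only things to verify are the elementary congruence bookkeeping $g = d-1$, $3\nmid d \Leftrightarrow g\not\equiv 2 \pmod 3$, and the existence over an arbitrary number field of a separable monic polynomial of the prescribed degree with enough $F$-rational roots, both of which are immediate.
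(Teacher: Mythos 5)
Your proposal is correct and is essentially the paper's (implicit) argument: the corollary is deduced immediately from Theorem~\ref{superelliptic twists} by taking $d=g+1$, noting $3\nmid d\Leftrightarrow g\not\equiv 2\pmod 3$, and choosing $f$ monic of degree $d$ with distinct $F$-rational roots. Your separate treatment of $g=1$ (where the section's standing assumption $d\geq 3$ fails, but an elliptic curve with a $3$-isogeny suffices via Theorem~\ref{ab var ranks}) is a sensible bit of care that the paper leaves unstated.
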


Many of the Jacobians considered by Theorem \ref{superelliptic twists} and Corollary \ref{cor:super} are absolutely simple.  For example, the Jacobian of the curve $y^3 = x^p - 1$ for some prime $p$ is absolutely simple as it has a primitive CM type \cite{Hazama}.  As such, for $100\%$ of primes $q \equiv 1 \pmod{3p}$, its reduction modulo $q$ is also absolutely simple by \cite[Theorem 3.1]{MP}.  Thus, if $f(x)$ is congruent to $x^p-1$ modulo such a prime $q$, then the Jacobian of the curve $y^3 = f(x)$ must also be absolutely simple.  We thank Ben Howard for raising the curves $y^3 = x^p - 1$ to our attention.

It is possible to state a more precise version of Theorem 11.3, bounding the average rank in terms of the  averages of certain local Selmer ratios; the latter should be computable in principle by extending the methods of \cite{SW2}.
If $J$ additionally has complex multiplication, then we can sometimes make our bound on the average rank very explicit.  In some cases, the bound is even strong enough to imply that a positive proportion of twists have rank 0.  We illustrate this with a particularly pretty example.

\begin{theorem}\label{picard50}
Let $J$ be the base change of the Jacobian of the Picard curve $C \colon y^3 = x(x^3 - 1)$ to the field $L = \Q(\zeta_9)$.  Then the average rank of the quadratic twists $J_s$, $s \in L^*/L^{*2}$, is at most $3$.  Moreover, at least $50\%$ of these twists have rank $0$.  
\end{theorem}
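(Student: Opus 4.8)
The plan is to make explicit, via the complex multiplication on $J$, the factorization of $[3]$ already implicit in Theorem~\ref{superelliptic twists}, and then combine Theorem~\ref{main} with the convexity argument of Theorem~\ref{globalbounds}. First I would record the arithmetic of $J$ over $L=\Q(\zeta_9)$. The Picard curve is the trigonal curve $y^3=f(x)$ with $f(x)=x(x^3-1)=x^4-x$ of degree $d=4$ (coprime to $3$) and genus $g=d-1=3$; its four roots $0,1,\zeta_3,\zeta_3^2$ all lie in $L$, so Theorem~\ref{superelliptic twists} applies over $L$. Moreover $C$ carries the automorphism $\sigma\colon(x,y)\mapsto(\zeta_3x,\zeta_9y)$, which is defined over $L$ and has order $9$; a short computation shows that $\sigma^*$ acts on the basis $dx/y^2,\ x\,dx/y^2,\ dx/y$ of $H^0(C,\Omega^1)$ with eigenvalues $\zeta_9,\zeta_9^4,\zeta_9^2$, all primitive ninth roots of unity. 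Hence $\sigma$ generates a copy of $\Z[\zeta_9]$ inside $\End_L(J)$, so $J$ has complex multiplication by $\O_K$ with $K=\Q(\zeta_9)$ (and $J$ is absolutely simple, the CM type $\{1,2,4\}$ being primitive, though I will not need this).

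The next step is to factor $[3]_J$. Since $3$ is totally ramified in $K$, one has $(3)=(1-\zeta_9)^6$ as ideals of $\O_K$, and since $\Q(\zeta_9)$ has class number one this gives $3=u\,(1-\zeta_9)^6$ for a unit $u\in\O_K^\ast$. Thus in $\End_L(J)$ we have $[3]_J=[u]\circ[1-\zeta_9]^{\circ 6}$, and because $[1-\zeta_9]$ is an isogeny of degree $\Nm_{K/\Q}(1-\zeta_9)=3$, this realizes $[3]_J$ as a composition of six $3$-isogenies, each equal to $\phi:=[1-\zeta_9]\colon J\to J$ up to the automorphism $[u]$. Quadratic twisting preserves endomorphism rings, so the identical factorization holds for every twist $J_s$, $s\in L^\ast/L^{\ast2}$; and composing an isogeny with an automorphism changes neither its kernel nor the image of its Kummer map, so each of the six factors of $[3]_{J_s}$ contributes the same Selmer group $\Sel_{\phi_s}(J_s)$.

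I would then show that the global Selmer ratio $c(\phi_s)$ equals $1$ for every $s$. By Proposition~\ref{lem:tamag} and the chain rule for the tangent map at the origin, the local ratio $c_v$ is multiplicative under composition of isogenies (the local Tamagawa numbers telescope), and $c_v$ of an automorphism is $1$; hence the global ratio is multiplicative and $c([3]_{J_s})=c(\phi_s)^6$. On the other hand, for any $g$-dimensional abelian variety $A/L$ one has $c_v([3]_A)=|3|_v^{-g}$ at every place, so $c([3]_A)=\bigl(\prod_v|3|_v\bigr)^{-g}=1$ by the product formula. Since $c(\phi_s)$ is a power of $3$, we conclude $c(\phi_s)=1$ for all $s$. (Alternatively this follows from a higher-dimensional analogue of Theorem~\ref{cm1}; the argument above sidesteps that generalization.)

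To finish, apply Theorem~\ref{main} with $\Sigma=L^\ast/L^{\ast2}$: the average size of $\Sel_{\phi_s}(J_s)$ is $1+\avg_\Sigma c(\phi_s)=2$. Since $2j\le 3^j-1$ for every integer $j\ge0$, summing over twists of bounded height yields $\avg_\Sigma\dim_{\F_3}\Sel_{\phi_s}(J_s)\le\tfrac{1}{2}$. Now $\rk J_s(L)\le\dim_{\F_3}\Sel_3(J_s)$, and iterating Lemma~\ref{lem:ranks-add} along the six-fold factorization of $[3]_{J_s}$ gives $\dim_{\F_3}\Sel_3(J_s)\le 6\dim_{\F_3}\Sel_{\phi_s}(J_s)$; hence the average rank of $J_s$ is at most $6\cdot\tfrac{1}{2}=3$. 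For the last assertion, $\Sel_{\phi_s}(J_s)=0$ forces $\Sel_3(J_s)=0$ and hence $\rk J_s(L)=0$, and because $|\Sel_{\phi_s}(J_s)|$ has average $2$ while each value is a power of $3$, at least half of the twists satisfy $\Sel_{\phi_s}(J_s)=0$ — exactly as in the proof of Theorem~\ref{globalbounds}. (One can alternatively deduce the proportion $\tfrac{1}{2}$ directly from the rank bound, since $\rk J_s(L)$ is always divisible by $6$, $J_s(L)\otimes\Q$ being a vector space over $\Q(\zeta_9)$.) The only genuinely non-formal part is the complex-multiplication setup in the first two paragraphs — in particular confirming via the eigenvalue computation that $\sigma$ supplies all of $\Z[\zeta_9]$, not a proper subring, inside $\End_L(J)$ — together with checking multiplicativity of $c_v$ at the archimedean places; everything else is bookkeeping.
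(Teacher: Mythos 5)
Your proposal is correct and follows essentially the same route as the paper: CM by $\Z[\zeta_9]$, the factorization $[3]=u\circ(1-\zeta_9)^6$, the verification that $c(\phi_s)=1$ for all $s$, and then Theorem \ref{main} combined with the convexity bound and iterated Lemma \ref{lem:ranks-add}. The only deviations are in two sub-steps, and both are valid: you deduce $c(\phi_s)=1$ globally, from multiplicativity of the local Selmer ratios under composition together with the product-formula identity $c([3]_A)=\prod_v |3|_v^{-g}=1$, whereas the paper computes each local factor separately ($c_\q=1$ away from $3$, $c_\p=27$ at the prime above $3$, and $(1/3)^3$ at the three complex places); and you obtain the $50\%$ rank-$0$ statement directly from the average $|\Sel_{\phi_s}(J_s)|=2$, since $\Sel_{\phi_s}(J_s)=0$ forces $\Sel_3(J_s)=0$, whereas the paper argues via the divisibility of $\rk J_s(L)$ by $6$ against the average-rank bound (an argument you also mention as an alternative). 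Your variant slightly reduces the local analysis to the elementary fact that $|\coker|/|\ker|$ is multiplicative under composition at every place (including the archimedean ones, where it is a snake-lemma computation), which is indeed true and is implicitly what the paper uses at the prime above $3$.
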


\begin{proof}
The curve $C$ falls under the rubric of Theorem \ref{superelliptic twists}.  But it also admits an order 9 automorphism $\zeta_9 \colon (x,y) \mapsto (x\zeta_9^3, y\zeta_9)$ whose cube is what we were calling $\zeta$.  Thus, $J$ has CM by the cyclotomic ring $\O_L = \Z[\zeta_9]$.  The endomorphism $\phi \:=1 - \zeta_9$ has degree~3, and its kernel $J[\phi]$ is generated by the rational point $(0,0)$.  Since $3\O_L = (1 - \zeta_9)^6\O_L$, we have $[3] = u \circ \phi^6$ for some unit $u \in \O_L^*$.  For any $s \in L^*/L^{*2}$, the quadratic twist $J_s$ also has CM by $\O_L$, and $\phi_s$ is again (up to a unit) the endomorphism $1 - \zeta_9$ on $J_s$.  

Let $\p = (1 - \zeta_9)\O_L$ be the unique prime of $L$ above $3$.  Then $c_\mathfrak{q}(\phi_s) = c_\mathfrak{q}(J_s)/c_\mathfrak{q}(J_s) = 1$ for all finite places $\mathfrak{q} \neq \p$.  At the prime $\p$, we have $c_\p(\phi_s) = |\alpha_s|_\p^{-1}$, where $\alpha_s$ is the determinant of the derivative of the endomorphism $\phi^*_s$ induced by $\phi_s$ on the formal group.  Thus, $c_\q(\phi_s)^6 = c_\q([3]) = |27|_\p^{-6} = 3^{18}$, and hence $c_\q(\phi_s) = 27$.  We conclude that the global Selmer ratio is equal to 
\[c(\phi_s) = \prod_{\q \leq \infty} c_\q(\phi_s) = \left(\prod_{\q \mid \infty} \dfrac13\right) c_\p(\phi_s) = \left(1/3\right)^3 \cdot 27 = 1.\] 
By Theorem 1.2, the average size of $\Sel_\phi(J_s)$ is equal to $1 + 1 = 2$.  The inequality $2r + 1 \leq 3^r$ then shows that the average rank of $\Sel_\phi(J_s)$ is at most 1/2.  Since $[3] = u \circ \phi^6$, we have 
\[\avg \, \dim_{\F_3}\Sel_3(J_s) \leq 6\cdot \avg\, \dim_{\F_3}\Sel_\phi(J_s) \leq 3,\] and so the average rank of $J_s$ is at most 3, proving the first part of the theorem.  

For the second part, note that the rank of $J_s$ is divisible by 6, since $J_s(L)$ admits a faithful action of $\O_L$.  So if less than $50\%$ of twists have rank 0, then more than $50\%$ have rank 6, which forces the average rank to be above 3, contradicting the first part of the theorem.    
\end{proof}

\begin{lemma}
The abelian variety $J$ is absolutely simple.
\end{lemma}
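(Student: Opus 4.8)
The final statement to prove is that the Jacobian $J$ of the Picard curve $C \colon y^3 = x(x^3-1)$, base-changed to $L = \Q(\zeta_9)$, is absolutely simple.

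\medskip

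The plan is to exploit the complex multiplication structure established in the proof of Theorem~\ref{picard50}: the Jacobian $J$ has CM by the cyclotomic field $\Q(\zeta_9)$, via the order-$9$ automorphism $\zeta_9 \colon (x,y) \mapsto (x\zeta_9^3, y\zeta_9)$. Since $\Q(\zeta_9)$ has degree $6 = 2\dim J$ over $\Q$, this exhibits $J$ as an abelian variety of CM type $(\Q(\zeta_9), \Phi)$ for some CM type $\Phi$. A standard criterion (see, e.g., Shimura--Taniyama, or Lang's \emph{Complex Multiplication}) states that such a $J$ is absolutely simple if and only if the CM type $\Phi$ is \emph{primitive}, i.e., not induced from a CM type on any proper CM subfield of $\Q(\zeta_9)$. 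Here the only proper CM subfield of $\Q(\zeta_9)$ is $\Q(\zeta_3) = \Q(\sqrt{-3})$, so the task reduces to checking that $\Phi$ does not arise by induction from $\Q(\sqrt{-3})$.

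\medskip

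First I would pin down the CM type $\Phi$ explicitly. The action of $\zeta_9$ on the space of holomorphic differentials $H^0(C, \Omega^1)$ can be read off directly: a basis of regular differentials on $C$ is given by forms of the shape $x^i\, dx / y^j$ for appropriate ranges of $i,j$ (dictated by the genus-$3$ computation for $y^3 = f(x)$ with $\deg f = 4$), and $\zeta_9$ scales each such differential by an explicit root of unity $\zeta_9^{k}$. Recording which three characters $\sigma \colon \Q(\zeta_9) \hookrightarrow \C$ (equivalently, which residues $k \bmod 9$ with $\gcd(k,9)=1$) appear gives $\Phi$ as a subset of $(\Z/9\Z)^*$ of size $3$, one from each pair $\{k, -k\}$. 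Then I would verify primitivity: a CM type $\Phi \subset (\Z/9\Z)^*$ is induced from $\Q(\sqrt{-3})$ precisely when $\Phi$ is stable under multiplication by the subgroup $\{1, 4, 7\} = \ker\big((\Z/9\Z)^* \to (\Z/3\Z)^*\big)$ fixing $\Q(\sqrt{-3})$; one checks the explicit $\Phi$ obtained above fails this stability. Concretely, $\Phi$ should come out to something like $\{1, 2, 4\}$ or $\{1, 2, 7\}$, and multiplying by $4$ moves an element of $\Phi$ outside $\Phi$, so $\Phi$ is primitive and hence $J$ is absolutely simple.

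\medskip

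The main obstacle is purely bookkeeping: correctly computing the action of $\zeta_9$ on the regular differentials and thereby identifying $\Phi$ as the correct $3$-element subset of $(\Z/9\Z)^*$ — an off-by-a-sign or a wrong normalization would change $\Phi$, though not (as it happens) the conclusion, since \emph{every} CM type on $\Q(\zeta_9)$ of the relevant shape that is not pulled back from $\Q(\sqrt{-3})$ is primitive, and there is essentially only the one subfield to rule out. Alternatively, and perhaps more cleanly, one can argue via reduction: the curve $y^3 = x(x^3-1)$ is closely related to the Fermat-type / superelliptic curves $y^3 = x^p - 1$ whose Jacobians are known to be absolutely simple by the primitive-CM-type argument of \cite{Hazama}, and one can cite or adapt that computation directly; in either approach the absolute simplicity follows once primitivity of the CM type is confirmed.
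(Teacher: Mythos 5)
Your proposal follows essentially the same route as the paper: use the CM structure by $\Q(\zeta_9)$, reduce absolute simplicity to primitivity of the CM type (the only proper CM subfield being $\Q(\zeta_3)$), and verify non-induction by computing the action of $\zeta_9$ on the holomorphic differentials — the paper does exactly this, checking that $\zeta_3=\zeta_9^3$ acts via different embeddings on $dx/y$ and $dx/y^2$, which is your stability check in compressed form (the type works out to $\{1,2,4\}\subset(\Z/9\Z)^*$, not fixed under multiplication by $\{1,4,7\}$). Only a cosmetic slip: your candidate $\{1,2,7\}$ is not a valid CM type since it contains the conjugate pair $\{2,7\}$, but this does not affect the argument.
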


\begin{proof}
Since $J$ has CM by $L = \Q(\zeta_9)$, it suffices by \cite[Thm.\ 3.5]{Lang} to show that its CM-type is not induced from the subfield $\Q(\zeta_9^3) = \Q(\zeta_3)$.  The endomorphism $\zeta_9 \in \End(J)$ acts on the holomorphic form $dx/y^2$ by multiplication by $\zeta_9 \in L$, whereas it acts on the holomorphic form $dx/y$ by multiplication by $\zeta_9^2$.  In particular, the endomorphism $\zeta_9^3 = \zeta_3$ does not act on all holomorphic forms via the same embedding into $L$, and so the CM-type is not induced from $\Q(\zeta_3)$.   
\end{proof}

Thus, Theorem \ref{picard50} gives the first example of an absolutely simple abelian variety of dimension $g >1$ over a number field such that a positive proportion of its quadratic twists have rank 0. 
%It would be very interesting to find examples over $\Q$.  
In \cite{ari}, these methods are extended to in fact give many examples over $\Q$.

\section{Examples}\label{examples}
We first give the examples referred to after Theorem \ref{cm} in the Introduction.
\begin{example}{\em 
Let $E$ be defined by $y^2 = x^3 + x^2 - 114x - 127$ (Cremona label 196b1), with $3$-isogeny $\phi \colon E \to E'$. Over $\Q$, $E$ has type $\IV$ reduction at $p = 2$, type $\IVS$ reduction at $p = 7$.  At $p = 3$, $E$ has good ordinary reduction with $\alpha_{\phi,\Q_3} = 1$.  

Let $F$ be any imaginary quadratic field in which $2$ is inert 
and $E_F$ the base change of $E$ to $F$. Since $2$ inert in $F$, we have $\mu_3 \subset F_\p$ for all primes $\p$ at which $E$ has bad reduction. By Proposition \ref{localratio}, we therefore have $c_\p(E_{F,s}') / c_\p(E_{F,s}) = 1$ for all primes $\p$ and for all $s \in F^*$.

We therefore have $c(\phi_s) = \alpha_s / 3$, where $\alpha_s = \prod_{\p | 3} \alpha_{\phi_s, F_\p}$. Since $E$ comes from a base change of a curve over $\Q$, Proposition 4.7.(2) in \cite{DD} combined with item (i) above tells us that $\alpha_{\phi, F_\p} = 1$ for each $\p \mid 3$. Further, since $E$  has good ordinary reduction at each $\p \mid 3$, we have  that $\alpha_{\phi_s,F_\p} = \alpha_{\phi, F_\p} = 1$.  We conclude that $c(\phi_s) = 1/3$ for all $s \in F^*$. This shows $T_1(\phi) = F^*/F^{*2}$, and so $5/6$ of twists $E_{F,s}$ have 3-Selmer rank 1, by Theorem \ref{globalbounds}(c). 
}
\end{example}

\begin{example}{\em
We give an example of a non-CM curve satisfying the conclusion of Theorem \ref{cm}.  Let $F = \Q(\sqrt{-3})$ and let $E$ be the elliptic curve 
\[E \colon y^2 = x^3  + \frac{9}{4}\left(  1 + \sqrt{-3}\right)^2  (\sqrt{-3}x - 4)^2.\] 

The curve $E$ has bad reduction only at the unique prime $\p_3$ above $3$.
While $E$ has potentially supersingular reduction and reduction type $\IVS$ at $\p_3$, $E$ has the peculiar property that $\alpha_{\phi_s,F_{\p_3}} = 3$ and $c_{\p_3}(E'_s) = c_{\p_3}(E_s)$ for every $s \in F^*$. This fact does not directly follow from any of the results in this paper, but it can be observed computationally or deduced from the fact that $E$ is a $\Q$-curve.

We therefore find that $c(\phi_s) = 1$ for all $s \in F^*$.  By Theorem \ref{avgrank}, the average rank of $E_s$ is at most 1, and by  Theorem \ref{globalbounds}, at least 50\% of these twists have rank 0.
} 
\end{example}

Our third and final example will be used in our proof of the final assertion of Theorem \ref{totreal}. 
\begin{example}
\label{ex:infinite families}
{\em
For any $k$, let $t_k = 4^k \cdot 3^8$ and define $E_k$ to be the curve given by 
\[E_k \colon y^2 = x^3 + ( t_k + 27)  ( x + 4 )^2.\] The curve $E_k$ has a $3$-isogeny $\phi:E_k \rightarrow E_k^\prime$ with $\ker \phi = \langle (0, 2\sqrt{t_k+27})\rangle$. We claim that if $F$ is a number field, then $t(\phi_s) \ge r_2$ for all $s \in F^*/F^{*2}$, where $r_2$ is the number of complex places of $F$.

A discriminant calculation shows that $E_k$ has good reduction away from places dividing $6 (t_k + 27)$. Since $j(E_k)  = \frac{(t_k + 3)^3 (t_k + 27)}{t_k}$, $E$ has additive reduction at all bad places except those dividing $6$.  Suppose that $\p$ is a prime of $F$ dividing a rational prime $p > 3$.  If $E$ has bad reduction at $\p$, then $\p \mid ( t_k + 27)$, and as a result, $t_k \equiv -27 \pmod \p$. Since $t_k$ is manifestly a square, we see that $-3 \in F_\p^{*2}$. Since $E$ has additive reduction at $\p$ and $ \mu_3 \subset F_\p$, Proposition \ref{localratio} tells us that $c_\p(\phi_s) = 1$ for all $s \in F_\p^*$.

We now turn our attention to primes dividing $2$ and $3$.  If $\p \mid 2$, then we observe that $\p$ formally divides the denominator of $j(E_k)$ but not the numerator. We therefore see that $E$ has (potential) multiplicative reduction at $\p$. The $j$-invariant $j(E^\prime)$ is given by $\frac{(t_k + 27) (t_k + 243)^3}{t_k^3}$, and so $v(j^\prime) = 3v(j)$. Proposition \ref{localratio} then tells us that $c_\p(\phi_s) \in \{1,3\}$ for all $s \in F_\p^*$.

If $\p \mid 3$, then $\p$ formally divides both the numerator and denominator of $j(E_k)$. We find however that since $t_k$ is sufficiently divisible by $3$, we have $v(j) = - 2v(3)$.  Similarly, we have $v(j^\prime) = -6 v(3)$.  Combining Proposition \ref{localratio} and Theorem \ref{3adic}, we obtain $c_\p(\phi_s) \in \{3^{[F_\p:\Q_3]}, 3^{[F_\p:\Q_3]+1}\}$ for all $s \in F_\p^*$.

We therefore have
\begin{multline}
\label{eq:Tbd}
c(\phi_s) = 3^{-r_2} \prod_{v \textrm{ real} } c_v(\phi_s) \prod_{\p \nmid \infty} c_\p(\phi_s)
\ge 3^{-(r_1 + r_2)} \prod_{\p \nmid \infty} c_\p(\phi_s)
\\
\ge 3^{-(r_1 + r_2)} \prod_{\p | 6} c_\p(\phi_s)
\ge 3^{-(r_1 + r_2)} \prod_{\p | 3} 3^{[F_\p:\Q_3]}
= 3^{-(r_1 + r_2)} \cdot 3^{[F:\Q]}
= 3^{r_2}.
\end{multline}
}
\end{example}

As a result, we obtain the following:

\begin{proposition}
\label{prop:3 Selmer lower bd}
If $E = E_k$ as in Example $\ref{ex:infinite families}$, then $\dim_{\mathbb{F}_3} \Sel_3(E_s) \ge r_2$ for all $s \in F^*/(F^*)^2$.
\end{proposition}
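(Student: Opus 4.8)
The plan is to push the lower bound $t(\phi_s)\ge r_2$ established in Example~\ref{ex:infinite families} through Cassels' formula and the Selmer exact sequence of Lemma~\ref{lem:ranks-add}, landing on a lower bound for $\dim_{\F_3}\Sel_3(E_s)$.

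First, Example~\ref{ex:infinite families} gives $c(\phi_s)\ge 3^{r_2}$, so $t(\phi_s):=\ord_3 c(\phi_s)\ge r_2$ for every $s\in F^*/F^{*2}$. Second, Cassels' formula~\eqref{cassels formula} for the twisted isogeny $\phi_s$ reads
\[
c(\phi_s)=\frac{|\Sel_{\phi_s}(E_s)|\cdot|E_s'[\hat\phi_s](F)|}{|\Sel_{\hat\phi_s}(E_s')|\cdot|E_s[\phi_s](F)|},
\]
and since $E_s[\phi_s](F)$ and $E_s'[\hat\phi_s](F)$ each have $\F_3$-dimension $0$ or $1$, taking $\ord_3$ gives
\[
\dim_{\F_3}\Sel_{\phi_s}(E_s)=t(\phi_s)+\dim_{\F_3}\Sel_{\hat\phi_s}(E_s')+\dim_{\F_3}E_s[\phi_s](F)-\dim_{\F_3}E_s'[\hat\phi_s](F).
\]
Third, because $\hat\phi_s\circ\phi_s=[3]$, the five-term sequence~\eqref{fiveterm} of Lemma~\ref{lem:ranks-add}, applied with $\phi_1=\phi_s$, $\phi_2=\hat\phi_s$ and $\psi=[3]$, yields an injection $\Sel_{\phi_s}(E_s)/W_s\hookrightarrow\Sel_3(E_s)$ with $W_s:=E_s'[\hat\phi_s](F)/\phi_s(E_s(F)[3])$, whence $\dim_{\F_3}\Sel_3(E_s)\ge\dim_{\F_3}\Sel_{\phi_s}(E_s)-\dim_{\F_3}W_s$.

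Combining the three displays: if $E_s'[\hat\phi_s](F)=0$ then $W_s=0$ and $\dim_{\F_3}\Sel_{\phi_s}(E_s)\ge t(\phi_s)$, so $\dim_{\F_3}\Sel_3(E_s)\ge t(\phi_s)\ge r_2$ outright. Since $E_s'[\hat\phi_s]$ is trivialized over $F(\sqrt{-3(t_k+27)s})$, the only remaining case is the single squareclass $s\equiv -3(t_k+27)\bmod F^{*2}$, and this is where the real work lies. For that $s$ one has $E_s[\phi_s]\cong\mu_3$ and $E_s'[\hat\phi_s]\cong\Z/3\Z$, so $\dim_{\F_3}W_s\le 1$; moreover $(t_k+27)s$ is negative at every real place, so $c_v(\phi_s)=1$ there by Proposition~\ref{R}, and rerunning the estimate~\eqref{eq:Tbd} with these improved archimedean factors gives $c(\phi_s)\ge 3^{\,r_1+r_2}$, i.e. $t(\phi_s)\ge r_1+r_2$. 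Hence in this squareclass $\dim_{\F_3}\Sel_3(E_s)\ge t(\phi_s)-2\ge r_1+r_2-2$, which is $\ge r_2$ as soon as $r_1\ge 2$. For $r_1\le 1$ one must still produce one (when $r_1=1$) or two (when $r_1=0$) further classes in $\Sel_3(E_s)$; I would obtain them by working directly with the sequence $0\to\mu_3\to E_s[3]\to\Z/3\Z\to 0$, using that the constant group $E_s'[\hat\phi_s]$ furnishes a rational $3$-torsion point on $E_s'$ generating $\ker\hat\phi_s$, and tracking the images in $\Sel_3(E_s)$ of $\Sel_{\hat\phi_s}(E_s')$ and of the rational $3$-torsion of $E_s$, checking that they supplement the image of $\Sel_{\phi_s}(E_s)$. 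The main obstacle is precisely this last step --- the $3$-torsion bookkeeping in the squareclass $s\equiv -3(t_k+27)$ when $F$ has at most one real place; the rest of the argument is formal, following from Cassels' formula and Lemma~\ref{lem:ranks-add}.
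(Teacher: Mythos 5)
Your handling of the generic squareclasses coincides with the paper's: combining Cassels' formula \eqref{cassels formula} with the exact sequence \eqref{fiveterm} gives $\dim_{\F_3}\Sel_3(E_s)\ge t(\phi_s)\ge r_2$ whenever $E'_s(F)[\hat\phi]=0$, and you correctly isolate the single exceptional squareclass $s\equiv -3(t_k+27)$ as the crux. But your treatment of that squareclass has a genuine gap. The archimedean improvement you extract (all real $c_v(\phi_s)=1$ there, hence $t(\phi_s)\ge r_1+r_2$) only absorbs the deficit of $2$ coming from the torsion term in \eqref{cassels formula} and the kernel term in \eqref{fiveterm} when $r_1\ge 2$; for $r_1\le 1$ --- which includes every totally imaginary field, i.e.\ precisely the fields where the proposition and its use in Theorem \ref{totreal} have the most content --- you are left needing one or two additional $3$-Selmer classes, and the sketch you offer does not produce them: $\Sel_{\hat\phi_s}(E'_s)$ is the \emph{target} of the map out of $\Sel_3(E_s)$ in \eqref{fiveterm}, not a source of classes in it, and the rational $3$-torsion furnished by $E'_s(F)[\hat\phi]\ne 0$ lives on $E'_s$, not on $E_s$, so there is no direct bookkeeping that manufactures the missing classes. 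You flag this yourself as the main obstacle, and it is exactly where the argument is incomplete.

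The paper closes the exceptional case by a different mechanism: it improves the bound on $c(\phi_s)$ at the finite places above $6$ rather than at the archimedean places. A direct computation gives $-c_6(E_s)=-1728(t_k+27)^4(t_k^2+18t_k-27)$, and because $t_k=4^k\cdot 3^8$ this quantity is a square in $\Q_2$ and in $\Q_3$, hence in $F_\p$ for every $\p\mid 6$. Therefore $E_s$ has \emph{split} multiplicative reduction at every such $\p$, so by Proposition~\ref{localratio} each prime above $2$ and above $3$ contributes an extra factor of $3$ (the Tamagawa ratio $c_\p(E'_s)/c_\p(E_s)=3$) beyond the bound used in \eqref{eq:Tbd}; since $2$ and $3$ each have at least one prime above them in $F$, this yields $c(\phi_s)\ge 3^{r_2+2}$ uniformly in $r_1$. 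That extra $3^2$ exactly compensates the at most two dimensions lost to $E'_s(F)[\hat\phi]$ in \eqref{cassels formula} and to the kernel of $\Sel_{\phi_s}(E_s)\to\Sel_3(E_s)$, giving $\dim_{\F_3}\Sel_3(E_s)\ge r_2$ in all cases. This local computation at $2$ and $3$ --- made possible by the specific choice $t_k=4^k\cdot 3^8$ --- is the idea your proposal is missing.
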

\begin{proof}
It follows immediately from \eqref{fiveterm} and \eqref{cassels formula} that if $E = E_k$, then $\dim_{\mathbb{F}_3} \Sel_3(E_s) \ge r_2$ except for the sole $s \in F^*/(F^*)^2$ for which $E_s^\prime(F)[\hat \phi] \ne 0$. When $E_s^\prime(F)[\hat \phi] \simeq \Z /3\Z$, we need to show that $c(\phi_s) \ge 3^{r_2 + 2}$. We do so by showing that $E_{s}$ has split multiplicative reduction at all primes $\p \mid 6$.

By direct calculation, we have $-c_6(E_{s}) = -1728 (t_k + 27)^4 (t_k^2 + 18 t_k - 27)$. In $\Q^*/\Q^{*2}$, we therefore have
\begin{multline*}
-c_6(E_{s}) = -3 (t_k^2 + 18 t_k - 27) = -3(4^{2k} \cdot 3^{16} + 4^{k+1}\cdot 3^{10} - 27) 
\\
= 3^4(1 - 4^{2k} \cdot 3^{13} - 4^{k+1}\cdot 3^{7}) = (1 - 4^{2k} \cdot 3^{13} - 4^{k+1}\cdot 3^{7}).
\end{multline*}
As $(1 - 4^{2k} \cdot 3^{13} - 4^{k+1}\cdot 3^{7})$ is visibly a square in both $\Q_2$ and $\Q_3$, we see that $-c_6(E_{s})$ is a square in $F_\p^*$ for all $\p \mid 6$. As a result, $E_{s}$ has split multiplicative reduction at all primes $\p \mid 6$ and by  Proposition \ref{localratio}, we therefore have $c_\p(E'_{s})/c_\p(E_{s}) = 3$.

Recalculating the bound from in \eqref{eq:Tbd}, we therefore have 
\begin{equation*}
c(\phi_{s}) = 3^{r_2}  \prod_{\p | 6} c_\p(\phi_s) =  3^{r_2}  \cdot 3^{|\{ \p : \p \mid 6\}|} \ge 3^{r_2 +2},
\end{equation*}
where the final inequality follows from the fact that both $2$ and $3$ must each have at least one prime above them in $F$. It then follows from \eqref{fiveterm} and \eqref{cassels formula} that $\dim_{\mathbb{F}_3} \Sel_3(E_s) \ge r_2$.
\end{proof}

\begin{remark}{\em 
Proposition \ref{prop:3 Selmer lower bd} is an analogue of a similar result for $2$-Selmer groups obtained by the second author for curves having a rational point of order two \cite{KlagsbrunLowerBound}.}
\end{remark}

\begin{remark}{\em 
The technique used to construct the family $\mathcal{E}$ in Example $\ref{ex:infinite families}$ can be use to construct infinite families $\mathcal{E}_p$ for each $p \in \{ 3, 5, 7, 13\}$ such that for each $E \in \mathcal{E}_p$, $E$ has a cyclic $p$-isogeny and $\dim_{\mathbb{F}_p} \Sel_p(E_s) \ge r_2$ for every $s \in F^*$.
} 
\end{remark}

\section{Acknowledgements}
We thank N.\ Elkies, B.\ Howard, E.\ Howe, M.\ Stoll, A.\ Sutherland, and J.\ Wetherell for helpful comments and discussions.
MB was partially supported by a Simons Investigator Grant and NSF Grant DMS-1001828.
RJLO was partially supported by NSF grant DMS-1601398.

\bibliographystyle{abbrv}
\bibliography{references}

\begin{thebibliography}{10}

\bibitem{j=0}
M.~{Bhargava}, N.~{Elkies}, and A.~{Shnidman}.
\newblock {The average size of the 3-isogeny Selmer groups of elliptic curves
  $y^2 = x^3 + k$}.
\newblock {\em Preprint available at http://arxiv.org/abs/1610.05759}, Oct.
  2016.

\bibitem{BG}
M.~Bhargava and B.~H. Gross.
\newblock Arithmetic invariant theory.
\newblock In {\em Symmetry: representation theory and its applications}, volume
  257 of {\em Progr. Math.}, pages 33--54. Birkh\"auser/Springer, New York,
  2014.

\bibitem{BH}
M.~{Bhargava} and W.~{Ho}.
\newblock {On the average sizes of Selmer groups in families of elliptic
  curves}.
\newblock {\em preprint}.

\bibitem{sagecode}
M.~Bhargava, Z.~Klagsbrun, R.~{Lemke Oliver}, and A.~Shnidman.
\newblock {\verb^Sage^ code related to this paper}.
\newblock
  \url{http://math.tufts.edu/faculty/rlemkeoliver/code/threeselmer.html}.

\bibitem{BS1}
M.~Bhargava and A.~Shankar.
\newblock Binary quartic forms having bounded invariants, and the boundedness
  of the average rank of elliptic curves.
\newblock {\em Ann. of Math. (2)}, 181(1):191--242, 2015.

\bibitem{BS2}
M.~Bhargava and A.~Shankar.
\newblock Ternary cubic forms having bounded invariants, and the existence of a
  positive proportion of elliptic curves having rank 0.
\newblock {\em Ann. of Math. (2)}, 181(2):587--621, 2015.

\bibitem{BSW2}
M.~{Bhargava}, A.~{Shankar}, and X.~{Wang}.
\newblock {Geometry-of-numbers methods over global fields II: Coregular
  representations}.
\newblock {\em in preparation}.

\bibitem{BSW1}
M.~{Bhargava}, A.~{Shankar}, and X.~{Wang}.
\newblock {Geometry-of-numbers methods over global fields I: Prehomogeneous
  vector spaces}.
\newblock {\em Preprint available at http://arxiv.org/abs/1512.03035}, Dec.
  2015.

\bibitem{BFT}
N.~Bruin, E.~V. Flynn, and D.~Testa.
\newblock Descent via {$(3,3)$}-isogeny on {J}acobians of genus 2 curves.
\newblock {\em Acta Arith.}, 165(3):201--223, 2014.

\bibitem{BN}
N.~{Bruin} and B.~{Nasserden}.
\newblock {Arithmetic aspects of the Burkhardt quartic threefold}.
\newblock {\em Preprint available at http://arxiv.org/abs/1705.09006}, May
  2017.

\bibitem{Cassels8}
J.~W.~S. Cassels.
\newblock Arithmetic on curves of genus 1. {VIII}. {O}n conjectures of {B}irch
  and {S}winnerton-{D}yer.
\newblock {\em J. Reine Angew. Math.}, 217:180--199, 1965.

\bibitem{Cesnavicius2}
K.~{\v{C}}esnavi\v{c}ius.
\newblock Selmer groups as flat cohomology groups.
\newblock {\em J. Ramanujan Math. Soc.}, 31(1):31--61, 2016.

\bibitem{chang}
S.~Chang.
\newblock Note on the rank of quadratic twists of {M}ordell equations.
\newblock {\em J. Number Theory}, 118(1):53--61, 2006.

\bibitem{cohenpazuki}
H.~Cohen and F.~Pazuki.
\newblock Elementary 3-descent with a 3-isogeny.
\newblock {\em Acta Arith.}, 140(4):369--404, 2009.

\bibitem{comalada}
S.~Comalada.
\newblock Twists and reduction of an elliptic curve.
\newblock {\em J. Number Theory}, 49(1):45--62, 1994.

\bibitem{DD}
T.~Dokchitser and V.~Dokchitser.
\newblock Local invariants of isogenous elliptic curves.
\newblock {\em Trans. Amer. Math. Soc.}, 367(6):4339--4358, 2015.

\bibitem{kg}
M.~{Gealy} and Z.~{Klagsbrun}.
\newblock Minimal differentials of elliptic curves with a $p$-isogeny.
\newblock {\em Proc. Amer. Math. Soc.}, to appear.

\bibitem{Goldfeld}
D.~Goldfeld.
\newblock Conjectures on elliptic curves over quadratic fields.
\newblock In {\em Number theory, {C}arbondale 1979 ({P}roc. {S}outhern
  {I}llinois {C}onf., {S}outhern {I}llinois {U}niv., {C}arbondale, {I}ll.,
  1979)}, volume 751 of {\em Lecture Notes in Math.}, pages 108--118. Springer,
  Berlin, 1979.

\bibitem{Hazama}
F.~Hazama.
\newblock Hodge cycles on the {J}acobian variety of the {C}atalan curve.
\newblock {\em Compositio Math.}, 107(3):339--353, 1997.

\bibitem{Heath-Brown}
D.~R. Heath-Brown.
\newblock The size of {S}elmer groups for the congruent number problem. {II}.
\newblock {\em Invent. Math.}, 118(2):331--370, 1994.
\newblock With an appendix by P. Monsky.

\bibitem{James}
K.~James.
\newblock {$L$}-series with nonzero central critical value.
\newblock {\em J. Amer. Math. Soc.}, 11(3):635--641, 1998.

\bibitem{Kane}
D.~Kane.
\newblock On the ranks of the 2-{S}elmer groups of twists of a given elliptic
  curve.
\newblock {\em Algebra Number Theory}, 7(5):1253--1279, 2013.

\bibitem{KlagsbrunLowerBound}
Z.~Klagsbrun.
\newblock Elliptic curves with a lower bound on 2-{S}elmer ranks of quadratic
  twists.
\newblock {\em Math. Res. Lett.}, 19(5):1137--1143, 2012.

\bibitem{KMRann}
Z.~Klagsbrun, B.~Mazur, and K.~Rubin.
\newblock Disparity in {S}elmer ranks of quadratic twists of elliptic curves.
\newblock {\em Ann. of Math. (2)}, 178(1):287--320, 2013.

\bibitem{KMR}
Z.~Klagsbrun, B.~Mazur, and K.~Rubin.
\newblock A {M}arkov model for {S}elmer ranks in families of twists.
\newblock {\em Compos. Math.}, 150(7):1077--1106, 2014.

\bibitem{kriz}
D.~Kriz.
\newblock Generalized {H}eegner cycles at {E}isenstein primes and the {K}atz
  {$p$}-adic {$L$}-{\it function}.
\newblock {\em Algebra Number Theory}, 10(2):309--374, 2016.

\bibitem{krizli}
D.~{Kriz} and C.~{Li}.
\newblock {Heegner points at Eisenstein primes and twists of elliptic curves}.
\newblock {\em Preprint available at http://arxiv.org/abs/1609.06687}, Sept.
  2016.

\bibitem{Lang}
S.~Lang.
\newblock {\em Complex multiplication}, volume 255 of {\em Grundlehren der
  Mathematischen Wissenschaften [Fundamental Principles of Mathematical
  Sciences]}.
\newblock Springer-Verlag, New York, 1983.

\bibitem{zkli}
Z.~K. Li.
\newblock Quadratic twists of elliptic curves with 3-{S}elmer rank 1.
\newblock {\em Int. J. Number Theory}, 10(5):1191--1217, 2014.

\bibitem{morgan}
A.~{Morgan}.
\newblock {Quadratic twists of abelian varieties and disparity in Selmer
  ranks}.
\newblock {\em Preprint available at http://arxiv.org/abs/1706.06063}, June
  2017.

\bibitem{MP}
V.~K. Murty and V.~M. Patankar.
\newblock Splitting of abelian varieties.
\newblock {\em Int. Math. Res. Not. IMRN}, (12):Art. ID rnn033, 27, 2008.

\bibitem{rubin}
K.~Rubin.
\newblock Elliptic curves with complex multiplication and the conjecture of
  {B}irch and {S}winnerton-{D}yer.
\newblock In {\em Arithmetic theory of elliptic curves ({C}etraro, 1997)},
  volume 1716 of {\em Lecture Notes in Math.}, pages 167--234. Springer,
  Berlin, 1999.

\bibitem{satge}
P.~Satg\'e.
\newblock Groupes de {S}elmer et corps cubiques.
\newblock {\em J. Number Theory}, 23(3):294--317, 1986.

\bibitem{Schaefer}
E.~F. Schaefer.
\newblock Class groups and {S}elmer groups.
\newblock {\em J. Number Theory}, 56(1):79--114, 1996.

\bibitem{schaefer2}
E.~F. Schaefer.
\newblock Computing a {S}elmer group of a {J}acobian using functions on the
  curve.
\newblock {\em Math. Ann.}, 310(3):447--471, 1998.

\bibitem{SW2}
E.~F. Schaefer and J.~L. Wetherell.
\newblock Computing the selmer group of an isogeny between abelian varieties
  using a further isogeny to a jacobian.
\newblock {\em J.\ Number Theory}, 115(1):158--175, 2005.

\bibitem{selmer}
E.~S. Selmer.
\newblock The {D}iophantine equation {$ax^3+by^3+cz^3=0$}.
\newblock {\em Acta Math.}, 85:203--362 (1 plate), 1951.

\bibitem{Serre}
J.-P. Serre.
\newblock {\em Galois cohomology}.
\newblock Springer-Verlag, Berlin, 1997.
\newblock Translated from the French by Patrick Ion and revised by the author.

\bibitem{SW}
A.~{Shankar} and X.~{Wang}.
\newblock {Average size of the 2-Selmer group of Jacobians of monic even
  hyperelliptic curves}.
\newblock {\em Preprint available at http://arxiv.org/abs/1307.3531}, July
  2013.

\bibitem{Ananth}
A.~N. {Shankar}.
\newblock {2-Selmer groups of hyperelliptic curves with two marked points}.
\newblock {\em Preprint available at http://arxiv.org/abs/1611.03172}, Nov.
  2016.

\bibitem{ari}
A.~{Shnidman}.
\newblock {Quadratic twists of abelian varieties with real multiplication}.
\newblock {\em Preprint available at http://arxiv.org/abs/1710.04086}, Oct.
  2017.

\bibitem{Smith}
A.~{Smith}.
\newblock {$2^\infty$-Selmer groups, $2^\infty$-class groups, and Goldfeld's
  conjecture}.
\newblock {\em Preprint available at http://arxiv.org/abs/1702.02325}, Feb.
  2017.

\bibitem{sd}
P.~Swinnerton-Dyer.
\newblock The effect of twisting on the 2-{S}elmer group.
\newblock {\em Math. Proc. Cambridge Philos. Soc.}, 145(3):513--526, 2008.

\bibitem{Th}
J.~A. Thorne.
\newblock Arithmetic invariant theory and 2-descent for plane quartic curves.
\newblock {\em Algebra Number Theory}, 10(7):1373--1413, 2016.
\newblock With an appendix by Tasho Kaletha.

\bibitem{Vatsal}
V.~Vatsal.
\newblock Rank-one twists of a certain elliptic curve.
\newblock {\em Math. Ann.}, 311(4):791--794, 1998.

\bibitem{Wang}
T.~Wang.
\newblock {S}elmer groups and ranks of elliptic curves.
\newblock {\em {P}rinceton {U}niversity senior thesis}, 2012.

\end{thebibliography}

\end{document}